\theoremstyle{plain}
\newtheorem{theorem}{Theorem}[section]
\newtheorem{lemma}[theorem]{Lemma}
\newtheorem{corollary}{Corollary}
\theoremstyle{remark}
\theoremstyle{remark}
\newtheorem{definition}[theorem]{Definition}
\newcommand{\bm}{\boldsymbol}
\begin{document}

\begin{frontmatter}
\title{Optimal Federated Learning for Nonparametric Regression with Heterogeneous Distributed Differential Privacy Constraints}
\runtitle{Optimal Federated Learning}

\begin{aug}
\author[A]{\fnms{T. Tony}~\snm{Cai}\ead[label=e1]{tcai@wharton.upenn.edu}},
\author[A]{\fnms{Abhinav}~\snm{Chakraborty}\ead[label=e2]{abch@wharton.upenn.edu}}
\and
\author[A]{\fnms{Lasse}~\snm{Vuursteen}\ead[label=e3]{lassev@wharton.upenn.edu}}
\address[A]{Department of Statistics and Data Science,
University of Pennsylvania\printead[presep={,\ }]{e1,e2,e3}}
\end{aug}

\begin{abstract}
This paper studies federated learning for nonparametric regression in the context of distributed samples across different servers, each adhering to distinct differential privacy constraints. The setting we consider is heterogeneous, encompassing both varying sample sizes and differential privacy constraints across servers. Within this framework, both global and pointwise estimation are considered, and optimal rates of convergence over the Besov spaces are established.

Distributed privacy-preserving estimators are proposed and their risk properties are investigated. Matching minimax lower bounds, up to a logarithmic factor, are established for both global and pointwise estimation. Together, these findings shed light on the tradeoff between statistical accuracy and privacy preservation. In particular, we characterize the compromise not only in terms of the privacy budget but also concerning the loss incurred by distributing data within the privacy framework as a whole. This insight captures the folklore wisdom that it is easier to retain privacy in larger samples, and explores the differences between pointwise and global estimation under distributed privacy constraints. 
\end{abstract}

\begin{keyword}[class=MSC]
\kwd[Primary ]{62G08}
\kwd{62C20}
\kwd[; secondary ]{68P27}
\end{keyword}

\begin{keyword}
\kwd{Besov Spaces}
\kwd{Distributed Computation}
\kwd{Differential Privacy}
\kwd{Minimax Risk}
\kwd{Nonparametric Regression}
\kwd{Function Estimation}
\end{keyword}

\end{frontmatter}

\section{Introduction}
\label{sec:intro}

In today's data-driven world,  the proliferation of personal data and technological advancements has made the protection of privacy
a matter of paramount importance. Developing statistical methods with privacy guarantees is becoming increasingly important. Differential privacy (DP), one of the most widely adopted privacy frameworks, ensures that statistical analysis results do not divulge any sensitive information about the input data. DP was introduced in the seminal work by Dwork et al. \cite{dwork2006differential}. Since its inception, DP has garnered significant academic attention \cite{arachchige2019local,dwork2010differential,dwork2017exposed} and notable applications within industry leaders, including Google \cite{google_privacy}, Microsoft \cite{ding2017collecting}, and Apple \cite{apple2017}. It has also been embraced by governmental entities like the US Census Bureau \cite{abowd2020modernization}. 

A common setting in many real-life applications is the distributed nature of data collection and analysis. For example, medical data is spread across various hospitals in healthcare, customer data is stored in different branches or databases in financial institutions and various modern technologies rely on federated learning from networks of users, see, for example, \cite{li2020federated,konevcny2016federated,hard2018federated,beaufays2019federated,nguyen2022deep}. DP has found applications in many of these domains relating to, for example, healthcare, finance, tech and social sciences, where preserving individuals' data privacy is of utmost concern. In such scenarios, it is vital to develop efficient estimation techniques that respect privacy constraints while harnessing the collective potential of distributed data.

Federated learning is a machine learning paradigm designed to address the challenges of data governance and privacy. It enables organizations or groups, whether from diverse geographic regions or within the same organization, to collaboratively train and improve a shared global statistical model without external sharing of raw data. The learning process occurs locally at each participating entity, which we shall refer to as \emph{servers}. The servers exchange only characteristics of their data, such as parameter estimates or gradients, in a way that preserves privacy of the individuals comprising their data. Federated learning facilitates secure collaboration across industries like retail, manufacturing, healthcare, and financial services, allowing them to harness the power of data analysis while upholding data privacy and security.

Rigorous study of theoretical performance in federated learning settings with communication constraints has been conducted in, for example, bandwidth constraint parametric problems  \cite{duchi_optimality_2014,braverman2016communication,han2018geometric,barnes2020lower,cai2020distributed,pmlr-v125-acharya20b,szabo2022optimal} and bandwidth constraint nonparametric estimation and testing  \cite{szabo2020adaptive,pmlr-v80-zhu18a,szabo2022distributed,cai:2021:distributed,szabo2023optimal}. Under DP constraints, theoretical performance in federated learning settings have been studied for various parametric estimation and testing problems \cite{liu2020learning,pmlr-v206-acharya23a_user_level_LDP,levy2021learning,narayanan2022tight}. 
Federated learning settings where each server's sample consists of one individual observation (referred to as \emph{local} differential privacy settings) have been studied in many-normal-means model, discrete distributions and parametric models \cite{duchi2013local,duchi2018minimax,barnes2020fisher,pmlr-v119-acharya20a_context_aware_LDP,min_barg} and nonparametric density estimation \cite{sart_density_LDP,kroll_density_at_a_point_LDP,butucea_LDP_adaptation}.

This paper investigates the statistical optimality of federated learning under DP constraints in the context of nonparametric regression. We consider a setting where data is distributed among different entities, such as hospitals, that are concerned about sharing their data with other entities due to privacy concerns for their patients. Each entity communicates a transcript that fulfills a distinct DP requirement, and we assume a setting with $m$ servers, each with $n_j$ observations where $j = 1,\dots,m$. 

Our goals are two-fold: firstly, to establish optimal rates of convergence, measured in terms of minimax risk, for estimating the nonparametric regression function while adhering to DP constraints; secondly, to construct a rate-optimal estimator under these DP constraints. We explore both global and pointwise estimation, aiming to provide quantifiable measures of the trade-off between accuracy and privacy preservation. These convergence rates offer insights into the best achievable estimation performance in distributed settings while ensuring privacy. Recognizing that global estimation exhibits different characteristics compared to its pointwise counterpart in the classical setting \cite{cai2003rates}, we investigate how DP constraints impact global and pointwise estimation risks differently.

\subsection{Problem formulation}

We will begin by formally introducing the general framework of distributed estimation under privacy constraints. Consider a family of probability measures  $\{P_f\}_{f \in \cF}$ on the measurable space $(\mathcal{Z},\mathscr{Z})$, parameterized by $f \in \cF$. We consider a setting where $N = \sum_{j=1}^m n_j$ i.i.d. observations are drawn from a distribution $P_f$ and distributed across $m$ servers. Each server $j=1,\dots,m$ holds $n_j$ observations.

\begin{figure}[htb]
	\centering
	\includegraphics[width=0.9\textwidth]{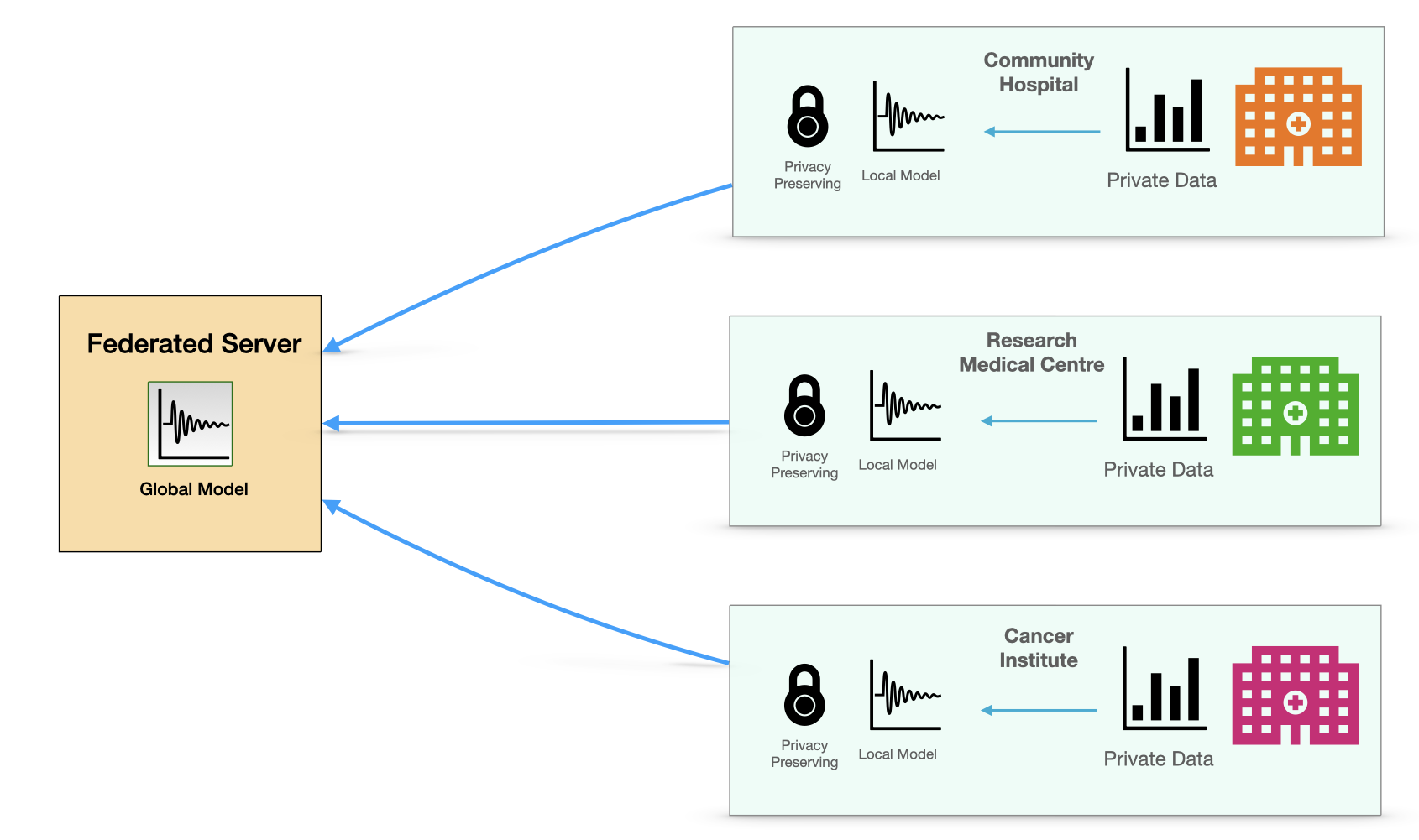} 
	\caption{An illustration of the federated learning framework.}
	\label{fig: FLplot}
\end{figure}

Let us denote by $Z^{(j)} = \{Z^{(j)}_i\}_{i=1}^{n_j}$ the $n_j$ realizations from $P_f$ on the $j$-th server. For each server, we output a (randomized) transcript $T^{(j)}$ based on $Z^{(j)}$, where the law of the transcript is given by a distribution conditional on $Z^{(j)}$, $\P( \cdot | Z^{(j)})$ on a measurable space $(\cT, \mathscr{T})$. The transcript $T^{(j)}$ has to satisfy a $(\varepsilon_j,\delta_j)$-differential privacy constraint, which is defined as follows.

\begin{definition}\label{def:differential_privacy}
	The transcript $T^{(j)}$ is $(\varepsilon_j,\delta_j)$-differentially private if for all $A \in \mathscr{T}$ and $z,z' \in \mathcal{Z}^{n_j}$ differing in one individual datum, it holds that
	\begin{equation*}
		\P \left( T^{(j)} \in A | Z^{(j)} = z \right) \leq  e^{\varepsilon_j} \P \left( T^{(j)} \in A | Z^{(j)} = z' \right)  + \delta_j.
	\end{equation*}
\end{definition}

In the above definition, ``differing in one datum" should be understood in terms of being Hamming distance ``neighbors." To clarify, the local datasets $Z^{(j)}$ and $\tilde{Z}^{(j)}$ are deemed \emph{neighboring} if their Hamming distance is at most $1$. The Hamming distance is calculated over $\mathcal{Z}^{n_j} \times \mathcal{Z}^{n_j}$. In other words, $\tilde{Z}^{(j)}$ can be derived from $Z^{(j)}$ by modifying at most one of the observations  $Z^{(j)}_1,\dots,Z^{(j)}_{n_j}$. The smaller the value of $\epsilon_j$ and $\delta_j$, the more stringent the privacy constraint. We shall consider $\varepsilon_j \leq C_{\varepsilon}$ for $j=1,\dots,m$ for a fixed but arbitrarily constant $C_{\varepsilon} > 0$, where the choice of the constant does not affect the rates in the results derived.

We focus on distributed protocols that apply to situations in which sensitive data is held by multiple parties, each generating an output while ensuring differential privacy. Within such a distributed protocol, the transcripts from each server only depend on its local data, and no information is exchanged between the servers. This occurs, for example, when multiple trials concerning the same population are conducted, but each location (e.g. hospital) does not wish to pool their original data because of privacy concerns. 

Each server transmits its transcript to the central server. The central server, utilizing all transcripts $T :=(T^{(1)}, \dots,T^{(m)})$, computes an estimator $\hat f: \cT^m \to \cF$. We refer to the pair $( \hat{f}, \{\left(\P( \cdot | z)\right)_{z \in \cZ} \}_{j=1}^m)$ as a \emph{distributed estimation protocol}, which we shall sometimes just denote as $\hat{f}$. We denote the vector of the differing DP levels by $(\bm \varepsilon,\bm\delta) = \{(\varepsilon_j,\delta_j)\}_{j=1}^m$ and denote the class of \emph{distributed estimation protocols}, i.e. $\left( \hat{f}, \{\left(\P( \cdot | z)\right)_{z \in \cZ} \}_{j=1}^m\right)$ satisfying Definition \ref{def:differential_privacy}, with $\cM(\bm \varepsilon,\bm\delta)$. We let $\P_f$ denote the joint law of transcripts and the $N = \sum_{j=1}^m n_j$ i.i.d. observations generated from $P_f$. We let $\E_f$ denote the expectation corresponding to $\P_f$.

In the context of nonparametric regression, the distributed estimation problem arises when data is distributed among multiple servers. Specifically, for each server $j$, the data $Z^{(j)} = \{(Y^{(j)}_i, X^{(j)}_i)\}_{i=1}^{n_j}$ consists of $n_j$ pairs of observations $(Y^{(j)}_i, X^{(j)}_i)$. Here, $X^{(j)}_i$ represents the input variable, and $Y^{(j)}_i$ represents the corresponding response variable.

We assume that under $P_f$, $X^{(j)}_i$ and $Y^{(j)}_i$ are generated by the relationship
\begin{equation}\label{eq:dynamics_generating_data}
	Y^{(j)}_i = f\left(X^{(j)}_i\right) + \xi^{(j)}_i, \quad X^{(j)}_i \sim U[0,1].
\end{equation}
Here, $f$ is an unknown function representing the underlying relationship between the input and response variables. The term $\xi^{(j)}_i$ represents random noise, assumed to be independent of $X^{(j)}_i$, and follows a Gaussian distribution with mean $0$ and known variance $\sigma^2$. Without loss of generality, we shall assume $\sigma = 1$ throughout the paper.

The aim is to estimate the function $f$ based on the distributed data. The difficulty of this estimation task arises from both the distributed nature of the data and privacy constraints that limit the sharing of information between servers.
As in the conventional decision-theoretical framework, for global estimation,  the estimation accuracy of a distributed estimator $\hat f \equiv \hat f(T)$ is measured by the integrated mean squared error (IMSE), $\E_{f} \|\hat f - f\|_2^2$, where the expectation is taken over the randomness in both the data (under $P_f$) and construction of the transcripts. As in the conventional  framework, a quantity of particular interest in federated learning is the \emph{global minimax risk} for the distributed private protocols over function class $\cF$,
\begin{equation}\label{eq:def-global-risk}
	\inf_{\hat f \in \cM(\bm \varepsilon,\bm\delta)} \sup_{f \in \cF} \E_{f} \|\hat f - f\|_2^2.
\end{equation}
The global risk characterizes the difficulty of the distributed learning problem over the function class $\cF$ when trying to infer the entire function underlying the data whilst adhering to the heterogeneous privacy constraints.

Besides global estimation, it is also of interest to estimate $f$ at a fixed point  $ x_0 \in (0,1)$ under the mean squared error (MSE). The \emph{pointwise minimax risk} in that case is given by 
\begin{equation}\label{eq:def-pointwise-risk}
	\inf_{\hat f \in \cM(\bm \varepsilon,\bm\delta)} \sup_{f \in \cF} \E_{f} (\hat f(x_0) - f(x_0))^2, \; \text{ for } x_0 \in (0,1),
\end{equation}
where $\hat f(x_0)$ denotes the estimated function value at $x_0 \in (0,1)$. The pointwise risk is particularly useful in understanding the behavior of estimators at specific points within the domain, which can be crucial in applications where certain regions are of particular interest or have higher consequences associated with estimation errors. It is known that in the classical setting, without privacy constraints, there are important differences  between the global risk and pointwise risk in terms of performance.  See, for example, \cite{cai2002block}.
 
We consider estimating $f$ over the Besov ball of radius $R>0$, denoted as  $\cB^{\alpha, R}_{p,q}[0,1]$ (defined in \eqref{Besov.ball}), where $p \geq 2$, $q \geq 1$ and ${\alpha} - 1/p > 1/2$.
This Besov space offers a suitable framework for analyzing functions with specific smoothness characteristics. Operating within this space allows us to encompass diverse function classes, accommodating varying levels of smoothness and complexity.

\subsection{Main contribution}

We quantify the cost of differential privacy for both the minimax global risk given by \eqref{eq:def-global-risk} and the pointwise risk as in \eqref{eq:def-pointwise-risk}. To achieve this, we introduce two differentially private estimators -- one for global and one for pointwise estimation. We obtain matching minimax lower bounds, up to logarithmic factors, thereby establishing their optimality. 

Our analysis reveals interesting phenomena, that go unobserved in settings where servers are assumed to have homogeneous privacy budgets. Further discussion on these broader findings is deferred to Section \ref{sec:main_results}. The results for the homogeneous case, where privacy budgets are equal among servers ($\varepsilon_j = \varepsilon$, $\delta_j = \delta$, and $n_j = n$ for $j=1,\dots,m$), yield novel insights. In this case, our results yield the following minimax rate for global estimation,
\begin{equation}\label{eq:minimax-rate-hom-intro-global}
	\inf_{\hat f \in \cM(\bm \varepsilon,\bm \delta)} \sup_{f \in \cB^{\alpha,R}_{p,q}} \E_{f} \|\hat f - f\|_2^2 \asymp \min \left\{M_{m,n} \cdot  \left((m n^2 \varepsilon^2)^{- \frac{2\alpha}{2\alpha+2}} + (m n )^{- \frac{2\alpha}{2\alpha+1}} \right) , 1 \right\},
\end{equation}
where $M_{m,n} \geq 1$ is a sequence at most of the order $\log(mn)\cdot\log(1/\delta)$. The rate $\left( m n  \right)^{- \frac{2\alpha}{2\alpha+1}}$ is the minimax rate for the global risk in the unconstrained problem, and is attained whenever $n\varepsilon^2 \gtrsim \left( mn\right)^{ \frac{1}{2\alpha+1}}$. The unconstrained optimal rate is attainable (up to a possibly poly-logarithmic factor) under DP constraints in the homogeneous setting as long as $n \varepsilon^2 \gtrsim \left( mn\right)^{ \frac{1}{2\alpha+1}}$. Whenever $n \varepsilon^2 \ll \left( mn\right)^{ \frac{1}{2\alpha+1}}$, the first term dominates and the minimax rate becomes $(m n^2 \varepsilon^2)^{- \frac{2\alpha}{2\alpha+2}}$. As expected in this regime, a smaller $\varepsilon$, which indicates a stronger privacy guarantee, results in an larger minimax estimation error. Whenever $\varepsilon \ll (\sqrt{m} n)^{-1}$, consistent estimation ceases to be possible altogether.

This result recover the known minimax rates under local DP constraints (i.e. $n=1$) that were derived for the problem of nonparametric density estimation for the $L_2$-risk in \cite{butucea_LDP_adaptation} and the squared Hellinger in \cite{sart_density_LDP}, up to logarithmic factor differences. When $n > 1$, the different powers with which $n$ and $m$ appear in the minimax rate reveal an important difference between the general distributed setting and local DP; if one distributes $N=mn$ observations across $m$ machines, the task becomes more challenging as the $N$ observations are spread over a greater number of machines, rather than having a large number of observations on a smaller number of machines. This phenomenon has an intuitive explanation; it is easier to retain privacy in larger samples, as each individual's data will have only a small influence on the aggregate statistics of interest.

For pointwise estimation, we establish the minimax rate in the homogeneous setting;
\begin{equation}\label{eq:minimax-rate-hom-intro-pointwise}
	\inf_{\hat f \in \cM(\bm \varepsilon,\bm\delta)} \sup_{f \in \mathcal{B}^{\alpha,R}_{p,q}} \E_{f} |\hat f(x_0) - f(x_0)|^2 \asymp  \min \left\{ M_{m,n} \cdot \left(\left( m n^2 \varepsilon^2  \right)^{- \frac{2\nu}{2\nu+2}} +\left( m n  \right)^{- \frac{2\nu}{2\nu+1}} \right) , 1\right\},
\end{equation}
where $M_{m,n} \geq 1$ is a sequence at most of the order $\log(mn)$. The rate reveals similar phenomena as the one for the global risk above, where for $n=1$ we recover the known minimax rate for the problem of nonparametric density estimation for the pointwise risk under local DP constraints as studied in \cite{kroll_density_at_a_point_LDP}. An important difference is the quantity $\nu = \alpha - 1/p$ appearing in the exponent instead of $\alpha$. This implies that privacy constraints impact pointwise estimation differently than global estimation, with the Besov parameter $p$ influencing both the relative privacy cost and the distribution of the $N=mn$ observations, as discussed further in Section \ref{ssec:homogeneous_budgets}.

Our findings have substantial implications for the development of federated learning algorithms that balance distributed privacy with accuracy. A clear understanding of the optimal convergence rate under distributed privacy constraints allows the design of algorithms that strike the right balance between accuracy and privacy trade-offs. This study contributes significantly to the growing knowledge on distributed settings for privacy-preserving machine learning, offering valuable insights for future research in this domain.

\subsection{Related Work}

The nonparametric regression setting considered in this work bears relationships with that of nonparametric density estimation as studied in the privacy setting for global risk  \cite{duchi2018minimax,sart_density_LDP,butucea_LDP_adaptation} and pointwise risk \cite{kroll_density_at_a_point_LDP}. The aforementioned papers consider the setting of local DP, in which the privacy protection is applied at the level of individual data entries or observations. This corresponds to the case wherein $n_j =1$ for $j=1,\dots,m$ in our setting. 

Distributed DP as considered in this paper, where DP applies at the level of the local sample consisting of multiple observations, has been studied for the homogeneous estimation setting for discrete distributions \cite{liu2020learning,pmlr-v206-acharya23a_user_level_LDP} and parametric mean  estimation \cite{levy2021learning,narayanan2022tight}. In the paper \cite{canonne2023private}, the authors consider discrete distribution testing in a two server setting ($m=2$) with differing DP constraints.

Settings in which the full data is assumed to be on a single server (i.e. $m=1$), where a single privacy constraint applies to all the observations, have also been studied for various parametric high-dimensional problems \cite{smith2011privacy,dwork2014analyze,bassily2014private,kamath2019privately,kamath2020private,cai2021cost,narayanan2022private}. The problem of mean estimation with a single server having heterogeneous privacy constraints for each individual observation have been studied in \cite{fallah2023optimal,chaudhuri2023mean}. 

\subsection{Organization of the paper}

The rest of the paper is organized as follows. We conclude this section with notation, definitions, and assumptions. Section \ref{sec:main_results} summarizes and discusses the minimax optimal convergence rates for global and pointwise risks under privacy constraints. In Section \ref{sec:upper-bounds}, we present distributed estimation procedures that achieve optimal global and pointwise risk while adhering to distributed privacy constraints, along with an upper bound on their statistical performance.  The matching minimax lower bounds for the global and pointwise risks are derived in Section \ref{sec:lower-bounds}. The proofs of the main results are given in the Supplementary Material \cite{Cai2023FL-NP-Regression-Supplement}.

\subsection{Notation, definitions and assumptions}

Throughout the article, we shall write $N := \sum_{j=1}^m n_j$ and consider asymptotics in $m$, the $n_j$'s and the privacy budget $(\bm \epsilon, \bm \delta) := \{\varepsilon_j,\delta_j\}_{j=1}^m$, where we assume that $N \to \infty$. For two positive sequences $a_k$, $b_k$ we write $a_k\lesssim b_k$ if the inequality $a_k \leq Cb_k$ holds for some universal positive constant $C$. Similarly, we write $a_k\asymp b_k$ if $a_k\lesssim b_k$ and $b_k\lesssim a_k$ hold simultaneously and let $a_k\ll b_k$ denote that $a_k/b_k = o(1)$. 

We use the notations $a\vee b$ and $a\wedge b$ for the maximum and minimum, respectively, between $a$ and $b$. For $k \in \N$, $[k]$ shall denote the set $\{1,\dots,k\}$. Throughout the paper $c$ and $C$ denote universal constants whose value can differ from line to line. The Euclidean norm of a vector $v \in \mathbb{R}^d$ is denoted by $\|v\|_2$. For a matrix $M \in \R^{d \times d}$, the norm $M \mapsto \| M \|$ is the spectral norm and $\text{Tr}(M)$ is its trace. Furthermore, we let $I_d$ denote the $d \times d$ identity matrix.

Throughout this paper, we shall let $\nu := \alpha - 1/p > 1/2$, which is a required assumption for estimation in Besov spaces (see e.g. \cite{Ibragimov1997}). We let $\mathcal{B}^{\alpha,R}_{p,q}$ denote the closed Besov ball of radius $R$, i.e. $\{ f \in \mathcal{B}^{\alpha}_{p,q}[0,1] : \| f\|_{\cB^{\alpha}_{p,q}} \leq R \}$, where $R > 0$ is taken to be a constant.

For random variables $U$ and $V$ with probability measures $P$ and $Q$ defined on the same measurable space, we let $D_{\mathrm{TV}}(U,V)$ denote the total variation norm between $P$ and $Q$, i.e. $\|P-Q\|_{\mathrm{TV}}$. Whenever $P \ll Q$, we write $D_{\mathrm{KL}}(U,V)$ for the Kullback-Leibler divergence between $P$ and $Q$: $D_{\mathrm{KL}}(P;Q) = \int \log \frac{dP}{dQ} dP$. Our lower bound results hold for transcripts taking values in standard Borel measure spaces. Different measure spaces or larger sigma-algebras can be considered (which only make the privacy constraint more stringent, see e.g. \cite{wasserman2010statistical}) as long as the quantities in the proofs are appropriately measurable.
\section{Minimax optimal rates of convergence}
\label{sec:main_results}

In this section, we present our primary findings regarding the minimax rate of convergence under DP constraints. Our results address both the global and pointwise risks.

For the global risk, the minimax rates are encapsulated in the upper bound of Theorem \ref{thm:minimax-upper-bound} and the lower bound of Theorem \ref{thm:minimax-lower-bound}, derived in Sections \ref{ssec:global_estimation_risk_estimator} and \ref{ssec:lowerbound}. Similarly, for the pointwise risk, our findings are summarized in Theorems \ref{thm:upper-bound-pointwise-risk} and \ref{thm:lower_bound-pointwise-risk}, in the form of an upper bound and lower bound respectively, in Sections \ref{ssec:pointwise_estimation_risk_estimator} and \ref{ssec:pointwise_lowerbound}. Together, these theorems are summarized by the following result.

\begin{theorem}\label{thm:rate_summary}
	For $\gamma > 0$, let $D > 0$ be the number solving the equation
	\begin{equation}\label{eq:dimension_determining_equation_global}
		D^{2\gamma+2} = \sum_{j=1}^m \left( n_j^2 \varepsilon_j^2 \right) \wedge \left(n_j D\right).
	\end{equation}
	Taking $\gamma = \alpha$, the minimax rate for the global risk is given by
	\begin{equation*}
		\inf_{\hat f \in \cM(\bm \varepsilon,\bm \delta)}\sup_{f \in \cB^{\alpha,R}_{p,q}} \E_{f} \|\hat f - f\|_2^2 \asymp  \left(M_{N} D^{-2\alpha} \wedge 1 \right),
	\end{equation*}
	whenever for all $j=1,\dots,m$ we have $ \delta_j \lesssim (n_j^{1/2}\varepsilon_j^2 (D\vee 1)^{-1})^{1+\kappa}$ for some $\kappa > 0$ and where $M_{N} \geq 1$ is a sequence of the order at most $\log (N) \log (1/ \min_{j\in[m]}\delta_j)$.

	For $\gamma = \nu$, the minimax rate for the pointwise risk is given by
	\begin{equation*}
		\inf_{\hat f \in \cM(\bm \varepsilon,\bm\delta)} \sup_{f \in \mathcal{B}^{\alpha,R}_{p,q}} \E_{f} \left|\hat f(x_0) - f(x_0)\right|^2 \asymp \left( M_{N} D^{-2\nu} \wedge 1\right),
	\end{equation*}
	whenever $\sum_{j} n_j \delta_j \to 0$, for a sequence $M_{N} \geq 1$ of the order at most $\log (N)$. 
\end{theorem}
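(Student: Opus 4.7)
My plan is to deduce this summary statement by combining the four separate bounds of Sections \ref{sec:upper-bounds} and \ref{sec:lower-bounds}. For the global rate I would pair the upper bound of Theorem \ref{thm:minimax-upper-bound} with the lower bound of Theorem \ref{thm:minimax-lower-bound}; for the pointwise rate I would pair Theorem \ref{thm:upper-bound-pointwise-risk} with Theorem \ref{thm:lower_bound-pointwise-risk}. The substance of the present theorem is then a reparameterization: the rates produced by those four results, which are stated in terms of the heterogeneous triples $(n_j,\varepsilon_j,\delta_j)$, must be shown to coincide with the unified expression $M_N D^{-2\gamma} \wedge 1$, where $D$ is defined implicitly by \eqref{eq:dimension_determining_equation_global} and $\gamma \in \{\alpha,\nu\}$.

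Before invoking those theorems I would verify that \eqref{eq:dimension_determining_equation_global} admits a unique positive solution $D^\star$. Viewed as a function of $D \geq 0$, the right-hand side is continuous, non-decreasing, and vanishes at $D=0$: each summand $(n_j^2 \varepsilon_j^2) \wedge (n_j D)$ is piecewise linear, equal to the ramp $n_j D$ for $D \leq n_j \varepsilon_j^2$ and to the cap $n_j^2 \varepsilon_j^2$ thereafter. The left-hand side $D^{2\gamma+2}$ is strictly convex with zero derivative at the origin and super-linear growth, so the two curves meet at a unique point $D^\star > 0$. The quantity $D^\star$ is to be interpreted as the effective resolution level at which, in the construction of Section \ref{sec:upper-bounds}, the squared approximation bias $D^{-2\gamma}$ is balanced against the aggregate variance of the privatized statistics; server $j$ contributes effective information of order $(n_j^2 \varepsilon_j^2) \wedge (n_j D)$ depending on whether its DP budget or its sampling noise is the binding constraint at resolution $D$. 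This justifies taking $\gamma = \alpha$ for $L_2$ approximation in the Besov ball (where the squared tail of the wavelet expansion is of order $D^{-2\alpha}$) and $\gamma = \nu = \alpha - 1/p$ for pointwise approximation (where the wavelet embedding loses $1/p$ smoothness at a fixed $x_0$).

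What remains is bookkeeping: verify that the concrete upper bounds in Theorems \ref{thm:minimax-upper-bound} and \ref{thm:upper-bound-pointwise-risk}, evaluated at the optimal resolution $D^\star$, reduce to $M_N (D^\star)^{-2\gamma}$, and likewise for the lower bound theorems. The cap at $1$ is automatic since the $L_2$- and sup-diameter of $\mathcal{B}^{\alpha,R}_{p,q}$ are of constant order $R$. The main obstacle is matching the polylogarithmic factor $M_N$ between the upper and lower bounds: the upper bounds pay $\log N$ from a union bound over dyadic resolution levels and, in the global case, an additional $\log(1/\min_{j\in[m]}\delta_j)$ from calibrating the Gaussian mechanism for approximate DP, while the lower bounds incur at most $\log N$ from the Fano-/Assouad-type reduction against the wavelet hypothesis packing. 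The technical conditions $\delta_j \lesssim (n_j^{1/2}\varepsilon_j^2 (D \vee 1)^{-1})^{1+\kappa}$ (global) and $\sum_j n_j \delta_j \to 0$ (pointwise) are precisely the regimes under which the approximate DP setting is, up to these logarithmic factors, no easier than pure DP, so that the upper and lower bounds agree in the unified form asserted.
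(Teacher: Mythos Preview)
Your proposal is correct and follows essentially the same route as the paper's own proof: the theorem is deduced by combining the upper bounds of Theorems \ref{thm:minimax-upper-bound} and \ref{thm:upper-bound-pointwise-risk} with the lower bounds of Theorems \ref{thm:minimax-lower-bound} and \ref{thm:lower_bound-pointwise-risk}, together with the trivial constant-order upper bound coming from the bounded diameter of $\cB^{\alpha,R}_{p,q}$ (the paper makes this explicit via the estimator $\hat f \equiv 0$, which is vacuously $(\bm\varepsilon,\bm\delta)$-DP). Two small inaccuracies in your commentary do not affect the argument but are worth flagging: the lower bound techniques used in the paper are the Van-Trees inequality (global) and Le Cam's two-point method with a coupling (pointwise), not Fano/Assouad; and the lower bounds of Theorems \ref{thm:minimax-lower-bound} and \ref{thm:lower_bound-pointwise-risk} are stated without logarithmic factors, so the entire slack $M_N$ lives in the upper bounds.
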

We briefly comment on the derived result. First, we note that a unique positive solution to \eqref{eq:dimension_determining_equation_global} always exists. To see this, note that the exponent $2\gamma+2 > 2$ implies that the left-hand side is smaller than the right-hand side for $D > 0$ small enough, whilst the right-hand side grows linearly for small enough $D>0$. Furthermore, the right-hand side increases sublinearly in $D$, whilst the left-hand side increases superlinearly (strictly so). 

When the privacy budget is large enough (e.g. $\varepsilon_j = \infty$ for $j=1,\dots,m$), $D$ can be seen to correspond with the `effective resolution level' of the estimation problem. That is, $D$ would be proportional to the number of wavelet coefficients needed to obtain a wavelet estimator that attains the optimal estimation rate, see for example \cite{donoho1998minimax}. For $\alpha > 0$ smooth functions in a Besov space, the optimal resolution level of a wavelet estimator would correspond to $(1+2\alpha)^{-1} \lceil \log_2 N \rceil$ for the global risk. However, under privacy constraints, the effective resolution level changes to $(2+2\alpha)^{-1} \lceil \log_2 D \rceil$, which can be substantially different from the case without privacy constraints. We present several specific cases of Theorem \ref{thm:rate_summary} through corollaries that encapsulate its various implications, as discussed in Sections \ref{ssec:homogeneous_budgets} and \ref{ssec:heterogeneous}.

The upper bounds for both types of risk (as given in Theorems \ref{thm:minimax-upper-bound} and \ref{thm:upper-bound-pointwise-risk}) are derived by constructing two estimators. One is proven optimal for global risk, while the other is optimal for pointwise risk. The construction of these estimators is detailed in Section \ref{sec:upper-bounds}. Notably, the optimal estimators for each risk type take distinct forms and employ different privacy mechanisms.

Both of these lower bounds require a different technique. For the global risk, the lower bounding technique is reminiscent  of the score attack of \cite{cai2021cost,cai2023score}, which is a generalization of the tracing adversary argument of \cite{bun2014fingerprinting,dwork2015robust}. We describe the technique in detail in Section \ref{ssec:lowerbound}. In case of the pointwise risk, we employ a coupling argument akin to \cite{coupling_acharya,karwa2017finite} in conjunction with Le Cam's two point method (see e.g. \cite{lecam1973convergence,Yu1997}). The technique for the pointwise risk lower bound is described in Section \ref{ssec:pointwise_lowerbound}. Whilst the techniques differ, a similarity is that they both account for the differences in the required levels of privacy between the servers, with the quantity $D > 0$ in \eqref{eq:dimension_determining_equation_global} being the outcome of balancing a bias-variance trade-off, where the variance for each of the servers is either dominated by the (local) noise in the data itself or by the privacy requirement of the server.

\subsection{The homogeneous setting}
\label{ssec:homogeneous_budgets}

Let us start by studying the case where all machines have both an equal amount of observations, as well as privacy budgets. The following result describes the global risk behavior under DP constraints when the servers are homogeneous in both the number of observations, as well as the privacy constraints they adhere to.

\begin{corollary}\label{thm:equal_budget_global_risk}
	Suppose that $n_j = n$, $\varepsilon_j = \varepsilon$, $\delta_j = \delta$ for $j=1,\dots,m$ and assume that $\delta \lesssim  (\varepsilon^2/ \sqrt{m})^{1+\kappa}$ for some $\kappa >0$. Then, the global minimax risk over $\cM(\bm \varepsilon,\bm \delta)$ satisfies \eqref{eq:minimax-rate-hom-intro-global}.
\end{corollary}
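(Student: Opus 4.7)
The plan is to specialize Theorem \ref{thm:rate_summary} to the homogeneous setting by (i) solving the defining equation \eqref{eq:dimension_determining_equation_global} explicitly, (ii) translating the quantity $D^{-2\alpha}$ into the two-term form appearing in \eqref{eq:minimax-rate-hom-intro-global}, and (iii) verifying that the corollary's hypothesis on $\delta$ implies the hypothesis required in Theorem \ref{thm:rate_summary}.

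First I would observe that with $n_j=n,\ \varepsilon_j=\varepsilon$ and $\gamma=\alpha$, equation \eqref{eq:dimension_determining_equation_global} collapses to
\begin{equation*}
D^{2\alpha+2} \;=\; m\bigl[(n^2\varepsilon^2)\wedge (nD)\bigr].
\end{equation*}
The positive solution splits into two regimes according to which term realizes the minimum. If $n\varepsilon^2\geq D$, the equation gives $D=(mn)^{1/(2\alpha+1)}$, and the consistency condition $n\varepsilon^2\geq D$ becomes $n\varepsilon^2\geq (mn)^{1/(2\alpha+1)}$. If instead $n\varepsilon^2<D$, we obtain $D=(mn^2\varepsilon^2)^{1/(2\alpha+2)}$, and the matching consistency check yields the complementary regime $n\varepsilon^2<(mn)^{1/(2\alpha+1)}$. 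A short comparison of exponents (using the boundary $n\varepsilon^2=(mn)^{1/(2\alpha+1)}$) shows that in each regime the corresponding term dominates the other in the sum
\begin{equation*}
(mn^2\varepsilon^2)^{-\frac{2\alpha}{2\alpha+2}} + (mn)^{-\frac{2\alpha}{2\alpha+1}},
\end{equation*}
so $D^{-2\alpha}$ is equivalent, up to absolute constants, to this sum. Together with $N=mn$ this identifies the rate in Theorem \ref{thm:rate_summary} with \eqref{eq:minimax-rate-hom-intro-global}, and yields $M_N = M_{m,n}$ of the stated order since $\log N = \log(mn)$ and $\min_j\delta_j=\delta$.

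The main bookkeeping step is to verify the privacy-budget hypothesis. Theorem \ref{thm:rate_summary} requires $\delta \lesssim \bigl(\sqrt{n}\,\varepsilon^2(D\vee1)^{-1}\bigr)^{1+\kappa}$, whereas the corollary only assumes $\delta\lesssim(\varepsilon^2/\sqrt{m})^{1+\kappa}$. Comparing the two inner quantities, it is enough to show $D\vee 1\lesssim \sqrt{mn}$ in both regimes. In the high-budget regime, $D=(mn)^{1/(2\alpha+1)}\leq (mn)^{1/2}$ since $\alpha>1/2$ (which follows from the standing assumption $\alpha-1/p>1/2$ with $p\geq 2$). In the low-budget regime, substituting $\varepsilon^2\leq (mn)^{1/(2\alpha+1)}/n$ into $D=(mn^2\varepsilon^2)^{1/(2\alpha+2)}$ and simplifying reduces the required inequality $mn^2\varepsilon^2\leq (mn)^{\alpha+1}$ to $1/(2\alpha+1)\leq\alpha$, i.e.\ $(2\alpha-1)(\alpha+1)\geq 0$, again valid for $\alpha>1/2$. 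Hence the corollary's hypothesis implies the one in Theorem \ref{thm:rate_summary}.

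The main (minor) obstacle is this last algebraic check across the two regimes, and in particular ensuring the use of the Besov constraint $\alpha>1/2+1/p$ at the right moment; everything else is a direct transcription of Theorem \ref{thm:rate_summary}. Combining the three steps gives the claimed rate \eqref{eq:minimax-rate-hom-intro-global} and completes the proof.
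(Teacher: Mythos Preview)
Your proposal is correct and follows essentially the same approach as the paper: specialize the main theorem to the homogeneous case, solve the defining equation for $D$ explicitly (obtaining $D=(mn)^{1/(2\alpha+1)}\wedge(mn^2\varepsilon^2)^{1/(2\alpha+2)}$), and identify $D^{-2\alpha}$ with the two-term rate in \eqref{eq:minimax-rate-hom-intro-global}. The paper's own proof is terser and does not explicitly carry out your step (iii) verifying that $\delta\lesssim(\varepsilon^2/\sqrt{m})^{1+\kappa}$ implies the hypothesis $\delta\lesssim(\sqrt{n}\,\varepsilon^2(D\vee1)^{-1})^{1+\kappa}$ of Theorem~\ref{thm:rate_summary}; your check that $D\vee 1\lesssim\sqrt{mn}$ via $\alpha>1/2$ is a welcome addition.
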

Whenever $  n \varepsilon^2 \ll \left( mn\right)^{ \frac{1}{2\alpha+1}}$, we have that
$$
\inf_{\hat f \in \cM(\bm \varepsilon,\bm \delta)}\sup_{f \in \cB^{\alpha,R}_{p,q}} \E_{f} \|\hat f - f\|_2^2 \asymp M_{m,n} \left( m n  \right)^{- \frac{2\alpha}{2\alpha+1}} \left( m^{\frac{1}{2\alpha+1}}  n^{- \frac{2\alpha}{2\alpha+1}} \varepsilon^{-2} \right)^{\frac{2\alpha}{2\alpha+2}}, 
$$
which indicates that the minimax estimation error becomes larger than the unconstrained minimax rate ($( m n )^{- \frac{2\alpha}{2\alpha+1}}$) by a factor of $( m^{\frac{1}{(2\alpha+1)}}  n^{- \frac{2\alpha}{2\alpha+1}} \varepsilon^{-2} )^{\frac{2\alpha}{2\alpha+2}}$ (ignoring the logarithmic factor). This factor can be seen to capture the cost of privacy in terms of the global risk. A smaller $\varepsilon$ results in an increase in minimax estimation error, where larger smoothness exacerbates the increase. 

A second observation that can be made on the basis of the privacy cost factor, is the cost of distributing observations in a privacy setting. That is to say, if one distributes $N=mn$ observations across $m$ machines, the task becomes more challenging as the $N$ observations are spread over a greater number of machines, rather than having a large number of observations on a smaller number of machines. The relative cost of distributing observations is also revealed to be related to the smoothness, where a larger smoothness again exacerbates the relative cost of distributing data. This observation confirms a folklore understanding that it is easier to retain privacy within a larger crowd. Distributing data across more machines means that each machine needs to add additional noise, to compensate for an overall lack of observations. It also affirms that local differentially private methods perform relatively poorly in multiple observation settings and that applying a privacy constraint at an observation level is comparatively costly.
 
Classically, the pointwise risk is known to be subject to different phenomena than the global risk over the Besov spaces \cite{cai2003rates}. Writing $\nu = \alpha - 1/p$ and assuming $\alpha > 1/p$, it is known that the unconstrained pointwise minimax risk satisfies
\begin{equation}\label{eq:pointwise_risk_optimal_rate}
	\inf_{\hat f } \sup_{f \in \mathcal{B}^{\alpha,R}_{p,q}} \E_{f} |\hat f(x_0) - f(x_0)|^2 \asymp \left( m n  \right)^{- \frac{2\nu}{2\nu+1}}.
\end{equation}
Compared to the unconstrained global risk, this indicates that the estimation error at a point is subject to a fundamentally slower convergence rate than the global estimation minimax rate, where the $\ell_p$-norm used to measure the smoothness of the Besov ellipsoid influences the minimax estimation performance. Roughly speaking, the ``pointwise'' integrability of the derivatives of the function underlying the data impacts the problem of estimation at a point, whilst the global risk remains unaffected. This effect disappears for H\"older alternatives, where $p = \infty$ and the minimax rate for the global risk and the pointwise risk coincide.

The main theorem on the minimax risk for pointwise estimation leads to the following result for the homogeneous setting.
\begin{corollary}\label{thm:pointwise_risk_homogeneous_setting}
	Suppose that $n_j = n$, $\varepsilon_j = \varepsilon$, $\delta_j = \delta$ for $j=1,\dots,m$ and $\delta \ll (mn)^{-1}$. Then, for $x_0 \in [0,1]$, the pointwise minimax risk at $x_0$ over the class $\cM(\bm \varepsilon,\bm \delta)$ satisfies \eqref{eq:minimax-rate-hom-intro-pointwise}.
	
\end{corollary}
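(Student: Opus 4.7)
The plan is to derive Corollary \ref{thm:pointwise_risk_homogeneous_setting} as a direct specialization of Theorem \ref{thm:rate_summary} with $\gamma = \nu$, by explicitly solving the dimension equation \eqref{eq:dimension_determining_equation_global} in the homogeneous regime and verifying the auxiliary hypotheses.

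First, I would verify the hypothesis $\sum_{j=1}^m n_j \delta_j \to 0$ required by Theorem \ref{thm:rate_summary} for the pointwise risk: under the homogeneous assumption this sum equals $mn\delta$, and the standing condition $\delta \ll (mn)^{-1}$ gives $mn\delta \to 0$. I would also note that the logarithmic factor $M_N$ from Theorem \ref{thm:rate_summary} reduces in this setting to a sequence of order $\log(mn)$, matching the factor $M_{m,n}$ appearing in \eqref{eq:minimax-rate-hom-intro-pointwise}.

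Next, I would solve \eqref{eq:dimension_determining_equation_global} with $\gamma=\nu$. In the homogeneous case the equation collapses to
\begin{equation*}
D^{2\nu+2} = m\bigl( (n^2\varepsilon^2) \wedge (nD) \bigr),
\end{equation*}
and I would split into two regimes according to which of the two terms inside the minimum is smaller. In the regime $D \le n\varepsilon^2$ the equation gives $D^{2\nu+1}=mn$, i.e.\ $D = (mn)^{1/(2\nu+1)}$, which is consistent exactly when $n\varepsilon^2 \gtrsim (mn)^{1/(2\nu+1)}$; in the complementary regime one obtains $D = (mn^2\varepsilon^2)^{1/(2\nu+2)}$. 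Plugging these into $D^{-2\nu}$ yields $(mn)^{-2\nu/(2\nu+1)}$ and $(mn^2\varepsilon^2)^{-2\nu/(2\nu+2)}$ respectively; since the solution $D$ of the equation is the maximum of these two candidate values, $D^{-2\nu}$ is, up to absolute constants, equal to the sum $(mn^2\varepsilon^2)^{-2\nu/(2\nu+2)} + (mn)^{-2\nu/(2\nu+1)}$ appearing in \eqref{eq:minimax-rate-hom-intro-pointwise}. Finally, the truncation against $1$ in Theorem \ref{thm:rate_summary} matches the $\min\{\cdot,1\}$ in the stated rate, handling the degenerate regime $\varepsilon \ll (\sqrt m n)^{-1}$.

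The main obstacle, beyond bookkeeping, is the transition between the expressions $D^{-2\nu}$ (from Theorem \ref{thm:rate_summary}) and the explicit sum-of-two-terms expression in \eqref{eq:minimax-rate-hom-intro-pointwise}. I would handle this by observing that for positive quantities $a,b$ one has $a \vee b \asymp a + b$, so taking $a = (mn^2\varepsilon^2)^{-2\nu/(2\nu+2)}$ and $b = (mn)^{-2\nu/(2\nu+1)}$ and checking that in each of the two regimes above the dominant term of the sum coincides with the value of $D^{-2\nu}$. I would also briefly verify that the condition $\delta \lesssim (n^{1/2}\varepsilon^2 (D\vee 1)^{-1})^{1+\kappa}$ appearing in the global-risk branch of Theorem \ref{thm:rate_summary} is not needed here, since the pointwise branch uses only $\sum_j n_j \delta_j \to 0$, which we have already established.
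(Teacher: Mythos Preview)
Your approach is essentially identical to the paper's: both specialize the dimension equation to the homogeneous case, solve it by splitting on whether $nD$ or $n^2\varepsilon^2$ achieves the minimum, and then invoke the main rate theorem (you cite Theorem \ref{thm:rate_summary}, the paper cites the underlying upper- and lower-bound theorems directly, which is an immaterial difference). Your explicit verification of $\sum_j n_j\delta_j \to 0$ and the remark about the logarithmic factor are fine and slightly more careful than the paper.

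There is one slip you should fix. You write that ``the solution $D$ of the equation is the maximum of these two candidate values.'' In fact it is the \emph{minimum}: in the regime $n\varepsilon^2 \gtrsim (mn)^{1/(2\nu+1)}$ the consistent solution is $D=(mn)^{1/(2\nu+1)}$, and one checks directly that this is smaller than $(mn^2\varepsilon^2)^{1/(2\nu+2)}$ there (and conversely in the other regime). This matters for your concluding step: if $D$ were the maximum then $D^{-2\nu}$ would be the \emph{minimum} of the two rate terms, and $\min(a,b)\not\asymp a+b$. The correct chain is $D=\min$, hence $D^{-2\nu}=\max$ of the two rate terms, and then $\max(a,b)\asymp a+b$ gives the sum in \eqref{eq:minimax-rate-hom-intro-pointwise}. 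With that correction your argument goes through.
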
 
The minimax rate for the pointwise risk seemingly takes on a similar form as that of the global risk and it coincides with the global risk whenever $p = \infty$. However, for finite values of $p$, the cost of privacy can be seen to differ. In particular, to attain the unconstrained optimal pointwise minimax rate \eqref{eq:pointwise_risk_optimal_rate}, it can be seen that a relatively larger $\varepsilon$ is needed, where a smaller value of $p$ in fact exacerbates the demand. More precisely, whenever $\left( mn\right)^{ \frac{1}{2\alpha+1}} \lesssim n \varepsilon^2 \ll \left( mn\right)^{ \frac{1}{2\nu+1}}$, the pointwise risk suffers from the DP constraints, whereas the global risk performance is the same as in the problem without the DP constraints.

Whenever $n\varepsilon^2 \ll \left( mn\right)^{ \frac{1}{2\nu+1}}$, comparing \eqref{eq:minimax-rate-hom-intro-pointwise} to \eqref{eq:pointwise_risk_optimal_rate} shows that the minimax rate of the classical (unconstrained) pointwise risk increases by a factor of $( m^{\frac{1}{2\nu+1}}  n^{- \frac{2\nu}{2\nu+1}} \varepsilon^{-2} )^{\frac{2\nu}{2\nu+2}}$ (ignoring the logarithmic factor). This shows that the pointwise risk is subject to a similar cost-relationship as the global risk. What is similar is that more stringent privacy demands in terms of a smaller $\varepsilon$ translate to an increased cost in terms of the pointwise risk. However, the relative increase in privacy cost resulting from a decrease in $\varepsilon$ for the case of pointwise risk, is smaller than the relative increase in privacy cost of the global risk, where this discrepancy is further exacarbated for smaller values of $p$. This shows that stringent privacy demands are comparatively less costly for the pointwise risk. 

On the other hand, the cost of distributing observations (i.e. increasing $m$ when distributing $N = nm$ observations) is relatively larger for smaller values of $p$. That is to say, differentially private estimation in pointwise risk suffers less from stringent per machine privacy demands, while it suffers more from the fact that data is distributed before privacy preservation is applied. This surprising phenomenon shows that in a distributed setting with privacy constraints, the distribution of the data across servers impacts the rate differently depending on the inferential task at hand.

\subsection{The heterogeneous setting}
\label{ssec:heterogeneous}

While the homogeneous setting described in the introduction serves to illustrate fundamental phenomena, real-world scenarios often involve heterogeneous data and privacy constraints across various data silos. In applications, data may not be uniformly distributed among different sources. For instance, consider cases where data is observed and processed locally, as in the context of hospitals. The results presented here highlight the optimal estimation under differential privacy in such a heterogeneous setting.

Theorems \ref{thm:minimax-lower-bound} and \ref{thm:minimax-upper-bound} describe the minimax rate for the global risk for the full spectrum of possibilities in terms of heterogeneous constraints. Similarly, Theorems \ref{thm:lower_bound-pointwise-risk} and \ref{thm:upper-bound-pointwise-risk} describe the minimax rate for the local risk in the heterogeneous setting. Here, for the sake of clarity of interpretation, we will focus on two different regimes of privacy budgets. For both regimes, whenever $\min_j \varepsilon \gtrsim 1/(mn)$, we require that $\min_j \delta_j \ll 1/(mn)^2$,  which translates to $\delta$ having no further impact on the minimax performance except for incurring a logarithmic factor in case of the global risk. For the first regime, we shall consider privacy budgets where the no single server has much more data than the other servers, comparatively to the stringency in terms of the DP parameter $\varepsilon$. This amounts to
\begin{equation}\label{eq:no_dominant_budget}
	\left( \sum_{j=1}^m n_j^2 \varepsilon_j^2 \right)^{\frac{1}{2\gamma+2}} \geq \max_j \, n_j \varepsilon_j^2,
\end{equation}
where $N = \sum^m_{j=1} n_j$ and $\gamma = \alpha$ or $\gamma = \nu$ for the global and pointwise risk respectively. The following result describes the minimax rate in this regime for the global and pointwise risks. 

\begin{corollary}\label{thm:heterogeneous_no_dominant_budget_both_risks}
	Suppose that $(\bm{\epsilon},\bm{\delta})$ is such that $\sum_{j=1}^m n_j^2 \varepsilon_j^2 \to \infty$, for $j=1,\dots,m$ we have $\delta_j \lesssim (\varepsilon_j^2/\sqrt{m})^{1+\kappa}$ for some $\kappa > 0$ and \eqref{eq:no_dominant_budget} holds with $\gamma = \alpha$. Then, it holds that
	\begin{equation*}
		\inf_{\hat f \in \cM(\bm \varepsilon,\bm \delta)} \sup_{f \in \cB^{\alpha,R}_{p,q}} \E_{f} \left\|\hat f - f\right\|_2^2 \asymp M_{N} \left( \sum_{j=1}^m n_j^2 \varepsilon_j^2 \right)^{- \frac{2\alpha}{2\alpha+2}}.
	\end{equation*}
	for some $M_{N} \geq 1$ of the order at most $\log(N) \cdot \log(1/\min_j \delta_j)$.

	If \eqref{eq:no_dominant_budget} holds for $\gamma = \nu$ and $\sum_{j=1}^m n_j \delta_j \to 0$, it holds that
	\begin{equation*}
		\inf_{\hat f \in \cM(\bm \varepsilon,\bm\delta)} \sup_{f \in \mathcal{B}^{\alpha,R}_{p,q}} \E_{f} \left|\hat f(x_0) - f(x_0)\right|^2 \asymp M_{N} \left( \sum_{j=1}^m n_j^2 \varepsilon_j^2 \right)^{- \frac{2
				\nu}{2\nu+2}}
	\end{equation*}
	for some $M_{N} \geq 1$ of the order at most $\log(N)$.
\end{corollary}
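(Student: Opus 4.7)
The plan is to derive the corollary as a direct consequence of Theorem \ref{thm:rate_summary} by solving the defining equation \eqref{eq:dimension_determining_equation_global} explicitly under the no-dominant-budget assumption \eqref{eq:no_dominant_budget}. Writing $A := \bigl(\sum_{j=1}^m n_j^2 \varepsilon_j^2\bigr)^{1/(2\gamma+2)}$, the claim amounts to showing that the unique positive solution $D$ to $D^{2\gamma+2} = \sum_{j=1}^m (n_j^2\varepsilon_j^2) \wedge (n_j D)$ satisfies $D \asymp A$, and then substituting $D^{-2\gamma} \asymp A^{-2\gamma/(2\gamma+2)}$ into the statement of Theorem \ref{thm:rate_summary}.

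The core step is a substitution check. Condition \eqref{eq:no_dominant_budget} reads $A \geq \max_j n_j \varepsilon_j^2$, which is equivalent to $n_j \varepsilon_j^2 \leq A$ for every $j$, i.e.\ $n_j^2 \varepsilon_j^2 \leq n_j A$. Plugging $D = A$ into the right-hand side of \eqref{eq:dimension_determining_equation_global} therefore gives $\sum_{j=1}^m (n_j^2 \varepsilon_j^2) \wedge (n_j A) = \sum_{j=1}^m n_j^2 \varepsilon_j^2 = A^{2\gamma+2}$, so $D = A$ satisfies the equation exactly. Combined with the uniqueness of the positive root (noted in the excerpt directly after Theorem \ref{thm:rate_summary}, using that the left side is superlinear and the right sublinear in $D$), this yields $D = A$. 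Since $\sum_j n_j^2\varepsilon_j^2 \to \infty$ forces $D \to \infty$, the truncation $\wedge\, 1$ in Theorem \ref{thm:rate_summary} is inactive for $N$ large.

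It remains to translate the corollary's hypotheses on $\bm{\delta}$ into the side conditions required by Theorem \ref{thm:rate_summary}. For the pointwise risk this is immediate, since the assumption $\sum_j n_j \delta_j \to 0$ is exactly what the theorem asks for. For the global risk, the theorem requires $\delta_j \lesssim (n_j^{1/2}\varepsilon_j^2 (D\vee 1)^{-1})^{1+\kappa}$; I would verify this by using $D \asymp A$ from Step~1 together with the elementary bound $A \leq m^{1/(2\gamma+2)} \max_j n_j \varepsilon_j$, which, in conjunction with $n_j \geq 1$, lets one check that the corollary's assumption $\delta_j \lesssim (\varepsilon_j^2/\sqrt{m})^{1+\kappa}$ implies $\delta_j \lesssim (n_j^{1/2}\varepsilon_j^2/D)^{1+\kappa}$ up to constants (after possibly shrinking $\kappa$). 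The logarithmic prefactor $M_N$ is then inherited verbatim from Theorem \ref{thm:rate_summary}, since $\log(1/\min_j \delta_j)$ and $\log N$ are the same quantities appearing in both statements.

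The main obstacle I anticipate is the bookkeeping for the $\delta$ side condition in the global case: one must ensure the inequality $\sqrt{m}\,n_j^{1/2} \gtrsim D$ holds in the regime under \eqref{eq:no_dominant_budget} so that the corollary's $\delta$-assumption is at least as restrictive as the theorem's. If this inequality fails without extra structure, I would strengthen the argument by noting that \eqref{eq:no_dominant_budget} and $\gamma > 1/2$ force $D^{2\gamma+2} \leq m\,(\max_j n_j \varepsilon_j^2)^2 \leq m\, A^2$, hence $D^{2\gamma} \leq m$, giving $D \lesssim m^{1/(2\gamma)}$; combined with $n_j \geq 1$ this closes the gap up to the adjustment of $\kappa$, which is harmless since $\kappa$ in the theorem is any positive constant. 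Everything else is algebraic substitution.
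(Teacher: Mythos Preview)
Your core argument is correct and essentially identical to the paper's: you verify that $D = A := \bigl(\sum_j n_j^2\varepsilon_j^2\bigr)^{1/(2\gamma+2)}$ satisfies the defining equation exactly under \eqref{eq:no_dominant_budget}, invoke uniqueness of the positive root, and use $\sum_j n_j^2\varepsilon_j^2 \to \infty$ to discard the truncation $\wedge\,1$ before reading off the rate from Theorem~\ref{thm:rate_summary}. The paper does precisely this and nothing more.

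One remark on the extra $\delta$-bookkeeping you attempt (which the paper's proof simply omits): your fallback inequality $D^{2\gamma+2} \leq m\,(\max_j n_j\varepsilon_j^2)^2$ is not valid in general, since $n_j^2\varepsilon_j^2 \leq (n_j\varepsilon_j^2)^2$ would require $\varepsilon_j^2 \geq 1$. This does not affect the main line of the argument, but the route you sketch to reconcile the two $\delta$-conditions does not close as stated.
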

In such a setting, the behaviour in terms of the privacy cost is similar to that described by Corollaries \ref{thm:equal_budget_global_risk} and \ref{thm:pointwise_risk_homogeneous_setting}. A first glance shows that in the distributed privacy setup, the problem is much more difficult compared to the problem without privacy constraints: the rate when no privacy constraints are in place, which is $N^{-\frac{2\alpha}{2\alpha+1}}$. Furthermore, the minimax rate shows that, when $N$ observations are divided over $m$ machines somewhat equally, there is benefit in dividing over as few machines as possible and there is an additional benefit to having machines with a relatively large amount of data. The explanation for this is the same as described in the homogeneous case: it is easier to retain privacy within large local samples. When the samples are ``spread thinly'' across the servers, the cost of DP is larger. Between the pointwise risk and the global risk, the phenomenon of pointwise risk incurring relatively less cost when $\varepsilon_j$'s are decreased compared to the global risk is also still observed when $p < \infty$, whilst the cost of distributing is relatively higher.

In the regime of \eqref{eq:no_dominant_budget}, even though the privacy budgets vary between the servers, all the servers can be seen to provide a non-negligible contribution to the central estimator. Another regime which we highlight, is the case where some $j^* \in [m]$ the privacy budget satisfies
\begin{align}\label{eq:dominant_budget}
	\left( n_{j^*}^2 \varepsilon_{j^*}^2 \right) \wedge \left( n_{j^*}^{\frac{2\gamma + 4}{2\gamma+2}} \varepsilon_{j^*}^{\frac{2}{2\gamma+2}} \right) \wedge n_{j^*}^{\frac{2\gamma+2}{2\gamma+1}}  \ge \underset{[m] \setminus \{j^*\}}{\overset{}{\sum}} n_j^2 \varepsilon_j^2  ,
\end{align}
where we consider $\gamma = \alpha$ or $\gamma = \nu$ for the global and pointwise risk respectively. This regime is in a sense the juxtaposition of \eqref{eq:no_dominant_budget}. Where in \eqref{eq:no_dominant_budget}, no server has a substantially better privacy budget compared to its number of observations, in the case of \eqref{eq:dominant_budget}, there is (at least) one server with a substantially larger sample and/or a relatively better privacy budget than the other servers. The following result captures the minimax rate for the global and pointwise risks in such a regime.

\begin{corollary}\label{thm:heterogeneous_dominant_budget_both_risks} 
	Suppose that $(\bm{\epsilon},\bm{\delta})$ satisfies $ \delta_j \lesssim (\sqrt{n_j}\varepsilon_j^2/(n_{j^*}^{2/3} \varepsilon_{j^*}^{2/3}))^{1+\kappa}$ for some $\kappa>0$ and all $j =1,\dots,m$, \eqref{eq:dominant_budget} holds for $\gamma=\alpha$ and $\varepsilon_{j^*} > (n_{j^*})^{-1}$. Then, it holds that
	\begin{equation*}
		\inf_{\hat f \in \cM(\bm \varepsilon,\bm \delta)}\sup_{f \in \cB^{\alpha,R}_{p,q}} \E_{f} \|\hat f - f\|_2^2 \asymp M_{m,n} \left(\left( n_{j^*}^2 \varepsilon_{j^*}^2 \right)^{- \frac{2\alpha}{2\alpha+2}} + \left( n_{j^*} \right)^{- \frac{2\alpha}{2\alpha+1}}\right)
	\end{equation*}
	for some $M_{N} \geq 1$ of the order at most $\log(N) \cdot \log(1/\min_j \delta_j)$. If \eqref{eq:dominant_budget} holds for $\gamma = \nu$, it holds that
	\begin{equation*}
		\inf_{\hat f \in \cM(\bm \varepsilon,\bm\delta)} \sup_{f \in \mathcal{B}^{\alpha,R}_{p,q}} \E_{f} |\hat f(x_0) - f(x_0)|^2 \asymp M_{m,n} \left( \left( n_{j^*}^2 \varepsilon_{j^*}^2 \right)^{- \frac{2\nu}{2\nu+2}}  +  \left( n_{j^*} \right)^{- \frac{2\nu}{2\nu+1}}\right) 
	\end{equation*}
	for some $M_{N} \geq 1$ of the order at most $\log(N)$. 
\end{corollary}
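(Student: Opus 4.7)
My approach is to apply Theorem~\ref{thm:rate_summary}, which reduces the problem to characterizing the solution $D$ of~\eqref{eq:dimension_determining_equation_global} under the dominant budget condition~\eqref{eq:dominant_budget}. Writing $a_j := (n_j^2 \varepsilon_j^2) \wedge (n_j D)$, the defining equation reads $D^{2\gamma+2} = a_{j^*} + \sum_{j \ne j^*} a_j$. The key observation is that the three terms inside the minimum on the left-hand side of~\eqref{eq:dominant_budget} are precisely $n_{j^*}^2 \varepsilon_{j^*}^2$, $n_{j^*} \cdot D_A^{\star}$, and $n_{j^*} \cdot D_B^{\star}$, where
\begin{equation*}
  D_A^{\star} := (n_{j^*}^2 \varepsilon_{j^*}^2)^{1/(2\gamma+2)}, \qquad D_B^{\star} := n_{j^*}^{1/(2\gamma+1)}
\end{equation*}
are the two candidate solutions obtained by keeping only server $j^*$ in the privacy-saturated regime ($a_{j^*} = n_{j^*}^2 \varepsilon_{j^*}^2$) and noise-limited regime ($a_{j^*} = n_{j^*} D$), respectively. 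One therefore expects $D \asymp D^{\star} := D_A^{\star} \vee D_B^{\star}$, yielding $D^{-2\gamma} \asymp (n_{j^*}^2 \varepsilon_{j^*}^2)^{-2\gamma/(2\gamma+2)} + n_{j^*}^{-2\gamma/(2\gamma+1)}$, which is the claimed rate.

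The lower bound $D \gtrsim D^{\star}$ is immediate from $D^{2\gamma+2} \ge a_{j^*}$: if $n_{j^*} \varepsilon_{j^*}^2 \le D$ then $D \ge D_A^{\star}$, and otherwise $D \ge D_B^{\star}$. For the matching upper bound I would bound $\sum_{j \ne j^*} a_j \le \sum_{j \ne j^*} n_j^2 \varepsilon_j^2$, which by~\eqref{eq:dominant_budget} is at most each of the three branches noted above. A short case distinction based on whether $D$ lies in the saturated or noise-limited regime then yields $\sum_{j \ne j^*} a_j \lesssim a_{j^*}$, hence $D^{2\gamma+2} \lesssim a_{j^*}$ and $D \lesssim D^{\star}$. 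Plugging $D \asymp D^{\star}$ into the rate $D^{-2\gamma}$ supplied by Theorem~\ref{thm:rate_summary} and using $\max(A,B)^{-2\gamma} \asymp A^{-2\gamma} + B^{-2\gamma}$ gives the stated expression (the $\min\{\cdot,1\}$ truncation being irrelevant in this regime, since $\varepsilon_{j^*} > n_{j^*}^{-1}$ guarantees $D^{\star} \to \infty$).

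It remains to verify the $\delta_j$ hypothesis of Theorem~\ref{thm:rate_summary}. For the pointwise risk ($\gamma = \nu$) the requirement $\sum_j n_j \delta_j \to 0$ is assumed outright. For the global risk ($\gamma = \alpha$), using $\alpha > 1/2$ together with the lower bounds on $\varepsilon_{j^*}$ implicit in each regime (either $\varepsilon_{j^*} > n_{j^*}^{-1}$, which forces $n_{j^*}\varepsilon_{j^*} > 1$ in the saturated subregime, or $\varepsilon_{j^*} > n_{j^*}^{-\alpha/(2\alpha+1)}$ in the noise-limited subregime), a short calculation shows $D^{\star} \lesssim n_{j^*}^{2/3}\varepsilon_{j^*}^{2/3}$, so that the stated bound $\delta_j \lesssim (\sqrt{n_j}\varepsilon_j^2/(n_{j^*}^{2/3}\varepsilon_{j^*}^{2/3}))^{1+\kappa}$ implies the Theorem~\ref{thm:rate_summary} hypothesis $\delta_j \lesssim (\sqrt{n_j}\varepsilon_j^2/D)^{1+\kappa}$. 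The main obstacle is the bookkeeping in the upper bound step---identifying which branch of~\eqref{eq:dominant_budget} controls the non-$j^*$ contribution in each subregime---but once the algebraic identities $n_{j^*} D_A^{\star} = n_{j^*}^{(2\gamma+4)/(2\gamma+2)}\varepsilon_{j^*}^{2/(2\gamma+2)}$ and $n_{j^*} D_B^{\star} = n_{j^*}^{(2\gamma+2)/(2\gamma+1)}$ are noted, the rest reduces to substitution.
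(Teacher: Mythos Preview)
Your overall strategy---reduce to Theorem~\ref{thm:rate_summary} and show $D\asymp D^\star$ where $D^\star$ is the solution of the single-server equation $D^{2\gamma+2}=(n_{j^*}^2\varepsilon_{j^*}^2)\wedge(n_{j^*}D)$---is exactly the paper's approach, and the upper- and lower-bound mechanics you sketch match the paper's proof almost line for line. However, there is a genuine slip in the bookkeeping that you should fix.

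The single-server solution is $D^\star=D_A^\star\wedge D_B^\star$, a \emph{minimum}, not the maximum you write. Your own lower-bound argument reveals this: from $D^{2\gamma+2}\ge a_{j^*}$ you correctly deduce ``$D\ge D_A^\star$ if $n_{j^*}\varepsilon_{j^*}^2\le D$, and $D\ge D_B^\star$ otherwise'', but this disjunction only yields $D\ge D_A^\star\wedge D_B^\star$, not $D\ge D_A^\star\vee D_B^\star$. Correspondingly, the identity you invoke at the end is backwards: since $x\mapsto x^{-2\gamma}$ is decreasing, one has $(\min(A,B))^{-2\gamma}=\max(A^{-2\gamma},B^{-2\gamma})\asymp A^{-2\gamma}+B^{-2\gamma}$, whereas $(\max(A,B))^{-2\gamma}\asymp\min(A^{-2\gamma},B^{-2\gamma})$. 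Your two errors happen to cancel in the final displayed rate, but the intermediate claims are false as written.

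Once you replace $\vee$ by $\wedge$ throughout, the upper-bound step goes through cleanly without any case distinction: the paper simply observes that condition~\eqref{eq:dominant_budget} is equivalent to $\sum_{j\ne j^*}n_j^2\varepsilon_j^2\le (n_{j^*}^2\varepsilon_{j^*}^2)\wedge(n_{j^*}D^\star)$, and since $D\ge D^\star$ this is $\le(n_{j^*}^2\varepsilon_{j^*}^2)\wedge(n_{j^*}D)=a_{j^*}$, giving $D^{2\gamma+2}\le 2a_{j^*}$ directly. Your verification of the $\delta_j$ hypothesis via $D^\star\lesssim n_{j^*}^{2/3}\varepsilon_{j^*}^{2/3}$ (using $\alpha>1/2$) is a detail the paper omits, and is correct once $D^\star$ is taken as the minimum: it suffices to bound $D_A^\star=(n_{j^*}\varepsilon_{j^*})^{1/(\alpha+1)}\le(n_{j^*}\varepsilon_{j^*})^{2/3}$, which holds since $n_{j^*}\varepsilon_{j^*}>1$ and $1/(\alpha+1)<2/3$.
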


In the regime described by the theorem, certain servers have a large sample and relatively large privacy budget, compared to the ``majority'' of the other servers in the sense of \eqref{eq:dominant_budget}. The minimax rate derived describes that in such settings these large sample/budget servers dictate the statistical accuracy of estimation. This is true both for the global, as well as the pointwise risk. In terms of optimal estimation procedures, the minimax rate can be achieved by only using estimators based on the data of the server(s) with relatively large samples and privacy budget, as the benefit of the servers with smaller samples and privacy budgets have an asymptotically negligible benefit.

\section{Optimal Distributed $(\bm \varepsilon,\bm \delta)$-DP Estimators}
\label{sec:upper-bounds}

In this section, we present two estimators that attain the optimal rates as described by the theorems of the previous section. One estimator specifically targets the global risk, the other is constructed specifically to perform well in terms of the pointwise risk. Whilst it perhaps natural to estimate $f(x_0)$ using the global risk DP-estimator evaluated at $x_0$, the specific pointwise  estimator we propose combines DP-estimates of $f(x_0)$ computed locally (i.e. estimators of $f(x_0)$ computed at each of the servers). This approach offers several benefits, such as an improved performance regardless of the value of $\delta_j \geq 0$. Both estimators are constructed using a wavelet basis. 

Wavelets are known to have many favourable properties when using them for function estimation in classical settings, see for example \cite{donoho1998minimax,hall1999minimax,cai1999adaptive}. Under DP constraints, wavelet constructions have other desirable properties: they allow for exact control of the estimator's \emph{sensitivity} to changes in the data. Loosely speaking, this allows us to control the ``influence'' each individual observation has on the outcome of the estimator, whilst retaining the information the full sample has to a large extent.

\subsection{Wavelets and Besov spaces}

In the context of nonparametric regression, we aim to construct an optimal estimator for an unknown function $f$ based on the distributed data. Here, we assume that $f$ belongs to the Besov space $\cB^\alpha_{p,q}$. Roughly stated, the Besov space $\cB^\alpha_{p,q}$ containts functions having $\alpha$ bounded derivatives in $L_p$-space, with $q$ giving a finer control of the degree of smoothness. We refer the reader to \cite{triebel1992theory} for a detailed description. 
 
Wavelet bases allow characterization of the Besov spaces, where $\alpha$, $p$ and $q$ are parameters that capture the decay rate of wavelet basis coefficients. Before presenting the two optimal estimators for global and pointwise risk in Sections \ref{ssec:global_estimation_risk_estimator} and \ref{ssec:pointwise_estimation_risk_estimator} respectively, we first briefly introduce wavelets and collect some properties used to define the Besov space. For a more detailed and elaborate introduction of wavelets in the context of Besov spaces, we refer to \cite{hardle2012wavelets, johnstone2019manuscript}.

In our work we consider the Cohen, Daubechies and Vial construction of compactly supported, orthonormal, $A$-regular wavelet basis of $L_2[0,1]$, see for instance \cite{cohen1993wavelets}. First for any $A \in\mathbb{N}$ one can follow Daubechies' construction of the father $\phi(\cdot)$ and mother $\psi(\cdot)$ wavelets with $A$ vanishing moments and bounded support on $[0,2A-1]$ and $[-A+1,A]$, respectively, for which we refer to \cite{daubechies1992ten}. The basis functions are then obtained as
\begin{align*}
	\big\{ \phi_{l_0+1,m},\psi_{lk}:\, m\in\{0,...,2^{l_0+1}-1\},\quad l \geq l_0 +1,\quad k\in\{0,...,2^{l}-1\} \big\},
\end{align*}
with $\psi_{lk}(x)=2^{l/2}\psi(2^lx-k)$, for $k\in [A-1,2^l-A]$, and $\phi_{l_0+1, k}(x)=2^{l_0+1}\phi(2^{l_0 + 1}x-m)$, for $m\in [0,2^{l_0 + 1}-2A]$, while for other values of $k$ and $m$, the functions are specially constructed, to form a basis with the required smoothness property. In a slight abuse of notation, we shall denote the father wavelet by $\psi_{l_0  k} = \phi_{l_0 +1, k}$ and represent any function $f\in L_2[0,1]$ in the form
\begin{align}\label{eq:wavelet_transform}
	f=\sum_{l=l_0}^{\infty}\sum_{k=0}^{2^{l}-1} f_{lk} \psi_{lk},
\end{align}
where the $f_{lk}=\langle f,\psi_{lk}\rangle$ are called the \emph{wavelet coefficients}. Note that in view of the orthonormality of the wavelet basis the {$L_2$-norm} of the function $f$ is equal to
\begin{align*}
	\|f\|_2^2=\sum_{l=l_0}^{\infty}\sum_{k=0}^{2^{l}-1}f_{lk}^2.
\end{align*}
Next we give definition of Besov spaces using wavelets. Let us define the norms 
\begin{equation*}
	\| f\|_{\cB^{\alpha}_{p,q}} :=  \begin{cases}
		\left( \underset{l=l_0}{\overset{\infty}{\sum}} \left(2^{l(\alpha+1/2-1/p)} \left\| (f_{lk})_{k=0}^{2^l-1} \right\|_p\right)^{q}\right)^{1/q} &\text{ for } 1 \leq q < \infty, \\
		\; \underset{l\geq l_0}{\sup} \; 2^{l(\alpha+1/2-1/p)} \left\| (f_{lk})_{k=0}^{2^l-1} \right\|_p   &\text{ for } q = \infty,
	\end{cases}
\end{equation*}
for $\alpha\in(0,A)$, $1\leq q \leq \infty$, $2 \leq p \leq \infty$. Then, the Besov space $\mathcal{B}^{\alpha}_{p,q}[0,1]$ and Besov ball $\mathcal{B}^{\alpha,R}_{p,q}[0,1]$ of radius $R>0$ can be defined as
\begin{equation}
	\label{Besov.ball}
	\mathcal{B}^{\alpha}_{p,q}[0,1]=\{f\in L_2[0,1]: \| f\|_{\cB^{\alpha}_{p,q}}<\infty \} \;\text{and}\; \mathcal{B}^{\alpha,R}_{p,q}[0,1]=\{f\in L_2[0,1]: \| f\|_{\cB^{\alpha}_{p,q}}\leq R \},
\end{equation}
respectively. The above definition of the Besov space and norm is equivalent to the classical one based on the weak derivatives of the function (see e.g. Chapter 4 in \cite{gine_mathematical_2016}).

For the construction of our estimators, we consider a $A$-smooth wavelet basis ($A>\alpha$) with a compactly supported mother wavelet $\psi$ such that wavelets $\psi_{lk}(x) = 2^{l/2}  \psi(2^lx -k)$ for $l \geq l_0$ and $k =0,\dots,2^l-1$ form an orthonormal basis for $\cB^\alpha_{p,q}[0,1]$.

We will briefly describe in broad terms the idea behind using the wavelet transform \eqref{eq:wavelet_transform} to construct the global and pointwise optimal estimators. Both estimators are based on wavelet approximations up until a limited resolution level. Besides the excellent approximation properties of wavelets in Besov spaces (see e.g. \cite{gine_mathematical_2016}), the first property ensures that a change in the data in terms of $X_i^{(j)}$ has a limited change in terms of the ``size'' the wavelet estimator. The second and third property yield a limited support that shrinks at higher ``resolution levels'' of the wavelet functions, which controls the number of wavelet coefficients affected by a change in $X_i^{(j)}$. Making sure that changes in individual datums have a limited effect on the shared transcript is crucial for assuring privacy. A further, more detailed description of how these properties interlink is given in the Sections \ref{ssec:global_estimation_risk_estimator} and \ref{ssec:pointwise_estimation_risk_estimator} below.

\subsection{Constructing an optimal global estimator}
\label{ssec:global_estimation_risk_estimator}

We now proceed to construct the estimator, utilizing the wavelet transform of \eqref{eq:wavelet_transform}, allowing the representation of a function $f$ in $L_2$ as a linear combination of wavelet basis functions. We first introduce some notation. For $\tau > 0, x \in \R$, let $[ x ]_{\tau}$ denote $x$ clipped at the threshold $\tau$: 
\begin{equation*}
	[ x ]_{\tau} := \begin{cases}
		\tau \; \text{ if } x > \tau, \\ 
		x \; \text{ if } -\tau \leq x \leq \tau \\ 
		-\tau \; \text{ otherwise.}
	\end{cases}
\end{equation*}
Given $L \in \N$ and $\tau > 0$, each machine $j=1,\dots,m$ computes the real numbers
\begin{equation}\label{eq:estimated_wavelet_coefficient}
	\hat f^{(j)}_{lk;\tau} = \frac{1}{n_j}\sum_{i=1}^{n_j} \left[Y_i^{(j)}\right]_{\tau} \psi_{lk}\left(X_i^{(j)}\right),
\end{equation}
for $l,k \in \N$ such that $l_0 \leq l \leq L$, $0 \leq k  \leq 2^l-1$. We will specify the exact choice of $\tau$ and $L$ later. These numbers, which we denote as the vector
\begin{equation*}
	\bm{\hat{f}}^{(j)}_{L,\tau} := \left\{\hat{f}^{(j)}_{lk;\tau}: k = 0, \ldots, 2^l-1, l = l_0, \ldots, L \right\},
\end{equation*}
will form the statistic underlying our transcript. To assure privacy, we aim to communicate a noisy version of this vector. Adding additional noise leads to an estimator that is necessarily worse, adding noise of a large enough magnitude yields a final transcript satisfies the privacy guarantee of Definition \ref{def:differential_privacy}. To control the magnitude of the noise that needs to be added, it is important to have a statistic that does not change too drastically when the underlying data is changed in one data point. We formalize this in terms of the \emph{sensitivity} of the statistic $\bm{\hat{f}}^{(j)}_{L,\tau}$.

The following lemma controls the \emph{$L_2$-sensitivity} of the statistic $\bm{\hat{f}}^{(j)}_{L,\tau}$, i.e. the difference in Euclidian distance when applied to two neighboring data sets.
\begin{lemma}\label{lemma:l2-sensitivity-wavelets}
	Let $Z^{(j)}$ and $\tilde{Z}^{(j)}$ any realizations of neighboring data sets. It holds that
	$$
	\left\|\bm{\hat{f}}^{(j)}_{L,\tau}(Z^{(j)}) - \bm{\hat{f}}^{(j)}_{L,\tau}(\tilde{Z}^{(j)})\right\|_2 \leq c_\psi \frac{\tau \sqrt{2^L}}{n_j}
	$$
	where $c_\psi$ is a constant depending only on the choice of wavelet basis.
\end{lemma}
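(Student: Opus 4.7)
The plan is to exploit the fact that neighboring datasets differ in exactly one observation, combined with the compact-support and sup-norm properties of the CDV wavelet basis. Let $i^\ast \in \{1,\dots,n_j\}$ denote the single index at which $Z^{(j)}$ and $\tilde Z^{(j)}$ may differ, and write $(Y_{i^\ast}, X_{i^\ast})$ and $(\tilde Y_{i^\ast}, \tilde X_{i^\ast})$ for the two differing pairs. All other summands in the formula \eqref{eq:estimated_wavelet_coefficient} cancel, so for every index $(l,k)$,
$$
\hat f^{(j)}_{lk;\tau}(Z^{(j)}) - \hat f^{(j)}_{lk;\tau}(\tilde Z^{(j)}) \;=\; \frac{1}{n_j}\Bigl([Y_{i^\ast}]_\tau \psi_{lk}(X_{i^\ast}) \;-\; [\tilde Y_{i^\ast}]_\tau \psi_{lk}(\tilde X_{i^\ast})\Bigr).
$$

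Next, I would apply the triangle inequality in $\ell_2$ over the index set $\{(l,k) : l_0 \le l \le L,\ 0 \le k \le 2^l-1\}$ to get
$$
\bigl\|\bm{\hat f}^{(j)}_{L,\tau}(Z^{(j)}) - \bm{\hat f}^{(j)}_{L,\tau}(\tilde Z^{(j)})\bigr\|_2 \;\le\; \frac{1}{n_j}\sqrt{\sum_{l,k} [Y_{i^\ast}]_\tau^2 \psi_{lk}(X_{i^\ast})^2} \;+\; \frac{1}{n_j}\sqrt{\sum_{l,k} [\tilde Y_{i^\ast}]_\tau^2 \psi_{lk}(\tilde X_{i^\ast})^2}.
$$
Because $|[y]_\tau| \le \tau$ for every $y \in \mathbb{R}$, the problem is reduced to a deterministic uniform bound on $S_L(x) := \sum_{l=l_0}^L \sum_{k=0}^{2^l-1} \psi_{lk}(x)^2$, $x \in [0,1]$.

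The geometric input is that in the Cohen--Daubechies--Vial construction, the mother wavelet $\psi$ has support of length $2A-1$, so $\psi_{lk}(x) = 2^{l/2}\psi(2^l x - k)$ is nonzero in at most $c_1 = 2A-1$ values of $k$ for each resolution level $l$, and is uniformly bounded by $\|\psi\|_\infty\, 2^{l/2}$. At the boundary only a fixed number (independent of $l$) of boundary-corrected wavelets are introduced, each enjoying the same two estimates. The father wavelets at level $l_0$ satisfy analogous bounds with support length $2A-1$ and sup-norm bound $\|\phi\|_\infty 2^{(l_0+1)/2}$. Combining these gives $\sum_{k=0}^{2^l-1}\psi_{lk}(x)^2 \le c_1 \|\psi\|_\infty^2\, 2^l$, and summing the geometric series yields $S_L(x) \le c_\psi^2\, 2^L / 4$ for a constant $c_\psi$ depending only on the wavelet basis. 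Plugging this into the display above produces the stated sensitivity bound.

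The main (minor) obstacle is verifying the $O(1)$ bound on the number of indices $k$ with $\psi_{lk}(x) \ne 0$ near $x = 0$ or $x = 1$, where the basis is modified from pure translation-dilation. This is standard for the CDV construction since only a constant number of boundary-adapted wavelets are introduced on each side, all supported in an $O(2^{-l})$-neighborhood of the respective endpoint; I would cite \cite{cohen1993wavelets} for the precise support and $L^\infty$ bounds.
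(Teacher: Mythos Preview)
Your proposal is correct and follows essentially the same approach as the paper: both arguments isolate the single differing index, use the clipping bound $|[y]_\tau|\le\tau$, exploit that at each resolution level only $O(1)$ wavelets are nonzero at a given point (compact support of the CDV basis), and sum the resulting geometric series $\sum_{l\le L}2^l\lesssim 2^L$. The only cosmetic difference is that you apply the $\ell_2$ triangle inequality to split into two one-point sums before bounding, whereas the paper bounds the coordinatewise difference directly via an indicator $\mathbbm{1}\{\{X_{i^\ast},\tilde X_{i^\ast}\}\cap\mathrm{supp}(\psi_{lk})\ne\emptyset\}$; both routes use the same wavelet estimates and arrive at the same constant structure.
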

We provide a proof for the lemma in Section \ref{sec:proof-of-l2-sensitivity-for-wavelts}. The limited {$L_2$-sensitivity} of the $\bm{\hat{f}}^{(j)}_{L,\tau}$ is a consequence of merging two elements in its construction. First of, clipping limits the change in \eqref{eq:estimated_wavelet_coefficient} when $Y_i^{(j)}$ is exchanged for another data point $\tilde{Y}_i^{(j)}$. In the vector as a whole, the coordinate wise change is limited by the compact support of the wavelet basis. The essential feature of the wavelet basis here is that, even though the basis elements increase exponentially as the resolution levels $l$ increases, their support shrinks proportionally, ensuring that each $X_i^{(j)}$ is in the support of only finitely many wavelets at each resolution level. That is, there are at most $c_A > 0$ number of basis functions $\psi_{lk}$ with overlapping support at each resolution level $l$, where $c_A > 0$ depends on $A > \alpha$. This means that changing one datum in $Z^{(j)}$, say $(Y_i^{(j)},X_i^{(j)})$ to $(\tilde{Y}_i^{(j)},\tilde{X}_i^{(j)})$, has only a limited impact at the level of the transcript $\bm{\hat{f}}^{(j)}_{L,\tau}(Z^{(j)})$.

The bounded $L_2$-sensitivity means that the statistic $\bm{\hat{f}}^{(j)}_{L,\tau}(Z^{(j)})$, combined with additive, appropriately scaled Gaussian noise satisfies $(\varepsilon_j,\delta_j)$-differentially privacy. This result for mappings with bounded $L_2$-sensitivity is well known and a proof can be found in e.g. Appendix A of \cite{dwork2014algorithmic}. To be precise, the $j$-th server outputs
$
\tilde T^{(j)}_{lk;\tau} = \hat f^{(j)}_{lk;\tau} + W^{(j)}_{lk}
$
for $\,k=0,\dots,2^l-1,\,l=l_0,\dots,L$, where the coordinates of $\bm W^{(j)}:= (W^{(j)}_{lk} : k = 0, \ldots, 2^l-1, l = l_0, \ldots, L )$ are i.i.d. mean zero gaussian with variance  $\frac{4 \tau^2 2^{L}c_\psi^2\log(2/\delta_j)}{n_j^2\varepsilon_j^2}$. The constant $c_\psi := 2\sqrt{2}\sqrt{c_A} \| \psi \|_\infty$ matches the constant in Lemma \ref{lemma:l2-sensitivity-wavelets}. The addition of the gaussian noise ensures that the transcript
$${T}^{(j)}_{L,\tau} := \left\{ T^{(j)}_{lk;\tau} \, : \, k=0,\dots,2^l-1,\,l=l_0,\dots,L \right\} = \bm{\hat{f}}^{(j)}_{L,\tau}(Z^{(j)}) + \bm W^{(j)}$$
is $(\varepsilon_j,\delta_j)$-differentially private.

The final estimator of $f$ is then obtained via a post-processing step in which each of the transcripts is reweighted, taking the heterogeneity between the servers into account. The choice of weight depends crucially on the local number of observations $n_j$ and the local privacy constraint $\varepsilon_j$. Given the transcripts $T = ({T}^{(1)}_{L,\tau},\dots,{T}^{(m)}_{L,\tau})$, the final estimator takes the form of
$$
\hat f_{L,\tau}(x) = \sum_{l=l_0}^L \sum_{k=0}^{2^l-1} \left( \sum_{j=1}^m u_j T^{(j)}_{lk;\tau} \right) \psi_{k,l}(x),
$$
where the weights are given by 
\begin{equation}\label{eq:estimator_weights_global}
	u_j  = \frac{v_j}{\sum_jv_j} \; \text{ with } \; v_j = \left(n_j^2\varepsilon_j^2 \right) \wedge  \left( n_j 2^L \right).
\end{equation}
The following theorem captures the global risk attained by the estimator $\hat f_{L,\tau}$ resulting from the distributed $(\bm{\epsilon},\bm{\delta})$-DP procedure outlined above, with optimal selection of $L$ and a sufficiently large choice of $\tau$. For the latter, a choice of {$ C_{\alpha,R} + \sqrt{(2\alpha+1)L}$} is adequate, where $C_{\alpha,R} > 0$ is a constant, as specified by Lemma \ref{lem:uniform_bound_on_f} or a larger constant.

The variance of the Gaussian noise vectors $\bm W^{(j)}$, which yield the privacy guarantee, increases with $L$. Consequently, the optimal choice of $L$ is not just governed by the classical bias variance trade-off, but also by the trade-off in the additional noise required to guarantee privacy. 

The optimal choice of $L$ is taken as follows. Let $D > 0$ be the number solving the equation
\begin{equation}\label{eq:dimension_determining_equation_global}
	D^{2\alpha+2} = \sum_{j=1}^m \left( n_j^2 \varepsilon_j^2 \right) \wedge  \left( n_j D \right).
\end{equation}
Setting $L = (l_0 + 1) \vee \lceil \log_2(D) \rceil$ yields the optimal performance as described by the theorem below, in terms of a bias-variance-sensitivity trade-off. Furthermore, this performance turns out to be the theoretically best possible performance in a minimax sense, as established by the lower bound of Theorem \ref{thm:minimax-lower-bound} in Section \ref{sec:lower-bounds}.

\begin{theorem}\label{thm:minimax-upper-bound}
	Set {$\tau = C_{\alpha,R} + \sqrt{(2\alpha+1)L}$} and take $L = (l_0 + 1) \vee \lceil \log_2(D) \rceil$, where $D > 0$ is the solution to \eqref{eq:dimension_determining_equation_global}. 

	Then, the $L_2$-risk of the distributed $(\bm \varepsilon,\bm \delta)$-DP protocol $\hat f_{L,\tau}$ satisfies
	$$
	\sup_{f \in \cB^{\alpha,R}_{p,q}} \E_{f} \left\|\hat f_{L,\tau} - f \right\|_2^2 \leq C_\psi \log(N) 2^{-2L\alpha}\log(2/\delta'),
	$$
	where $\delta' = \min_{i\in[m]}\delta_i$ and $C_\psi$ denotes a constant depending on $\psi$.
\end{theorem}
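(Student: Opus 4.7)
The plan is to bound the $L_2$-risk via a bias-variance decomposition of the wavelet coefficients. Writing $f = \sum_l\sum_k f_{lk}\psi_{lk}$ and $\hat\beta_{lk} := \sum_j u_j T^{(j)}_{lk;\tau}$ for the estimated coefficients, orthonormality yields
\begin{equation*}
\E_f \bigl\|\hat f_{L,\tau} - f\bigr\|_2^2 = \sum_{l>L}\sum_{k} f_{lk}^2 + \sum_{l=l_0}^L\sum_{k=0}^{2^l-1} \E_f\!\left[(\hat\beta_{lk}-f_{lk})^2\right].
\end{equation*}
The tail sum (approximation error) is bounded by the standard Besov embedding for $p \geq 2$: the bound $\|(f_{lk})_k\|_p \lesssim 2^{-l(\alpha+1/2-1/p)}$ combined with the embedding $\|\cdot\|_2 \leq 2^{l(1/2-1/p)}\|\cdot\|_p$ on $\mathbb{R}^{2^l}$ yields $\sum_{l>L}\sum_k f_{lk}^2 \lesssim 2^{-2L\alpha}$.

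For each $(l,k)$ with $l \leq L$, I would decompose further into squared bias and variance. The bias $\sum_j u_j \E\hat f^{(j)}_{lk;\tau}-f_{lk}= -\E[(Y-[Y]_\tau)\psi_{lk}(X)]$ is controlled using $\|f\|_\infty\leq C_{\alpha,R}$ from Lemma \ref{lem:uniform_bound_on_f} and a Gaussian tail bound on $\xi$. With $\tau = C_{\alpha,R}+\sqrt{(2\alpha+1)L}$, Cauchy-Schwarz yields a per-coefficient squared bias of order $\exp(-(2\alpha+1)L/2)$, so the total contribution from squared biases, summed over the $O(2^L)$ coefficients, is negligible compared to $2^{-2L\alpha}$.

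For the variance, I split $\mathrm{Var}(T^{(j)}_{lk;\tau}) = \mathrm{Var}(\hat f^{(j)}_{lk;\tau}) + \mathrm{Var}(W^{(j)}_{lk}) \leq \tau^2/n_j + 4\tau^2 c_\psi^2 2^L\log(2/\delta_j)/(n_j^2\varepsilon_j^2)$, using $|[Y]_\tau|\leq\tau$ and $\|\psi_{lk}\|_{L^2}=1$ for the first term and the construction from Section \ref{ssec:global_estimation_risk_estimator} for the second. With $u_j=v_j/S$, $S=\sum_j v_j$, this gives
\begin{equation*}
\sum_j u_j^2 \mathrm{Var}(T^{(j)}_{lk;\tau}) \lesssim \frac{\tau^2\log(2/\delta')}{S^2}\sum_j v_j^2\left(\frac{1}{n_j}+\frac{2^L}{n_j^2\varepsilon_j^2}\right).
\end{equation*}
The key observation is that $v_j \leq n_j 2^L$ gives $v_j^2/n_j \leq v_j\cdot 2^L$ while $v_j \leq n_j^2\varepsilon_j^2$ gives $v_j^2 \cdot 2^L/(n_j^2\varepsilon_j^2) \leq v_j\cdot 2^L$; both inequalities hold by the definition of $v_j$ as a minimum. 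Hence the per-coefficient variance is bounded by $\tau^2\log(2/\delta')\cdot 2^L/S$.

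Summing over the $O(2^L)$ coefficients across all levels $l_0\leq l\leq L$ gives a total variance of order $\tau^2\log(2/\delta')\cdot 2^{2L}/S$. With $L = \lceil\log_2 D\rceil$ and $S = D^{2\alpha+2}$ from \eqref{eq:dimension_determining_equation_global}, this equals $\tau^2\log(2/\delta') \cdot D^{-2\alpha} \asymp \tau^2\log(2/\delta') \cdot 2^{-2L\alpha}$. Using $\tau^2 \lesssim L \lesssim \log N$ and combining with the approximation and bias bounds yields the stated result.

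The main obstacle will be the precise variance bookkeeping: verifying that the weights $u_j \propto v_j=(n_j^2\varepsilon_j^2)\wedge(n_j 2^L)$ simultaneously achieve near-optimal inverse-variance weighting in both the ``privacy-limited'' and ``sample-limited'' regimes, and that the resulting aggregate variance matches exactly the form dictated by the defining equation for $D$. The Besov approximation and the clipping bias analyses are comparatively standard applications of wavelet approximation theory and Gaussian concentration.
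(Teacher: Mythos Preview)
Your overall structure (tail $+$ bias $+$ variance) is sound, and the variance and approximation bounds are correct. The gap is in the bias step.

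You claim that Cauchy--Schwarz gives a per-coefficient squared bias of order $\exp(-(2\alpha+1)L/2)$ and that summing this over $O(2^L)$ coefficients is negligible compared to $2^{-2\alpha L}$. This last claim is false:
\[
\frac{2^L e^{-(2\alpha+1)L/2}}{2^{-2\alpha L}} = \exp\bigl((2\alpha+1)L(\ln 2 - \tfrac12)\bigr) \longrightarrow \infty,
\]
since $\ln 2 > 1/2$. The value of $\tau$ in the theorem is calibrated so that the \emph{squared} bias is of order $e^{-(2\alpha+1)L}$, not $e^{-(2\alpha+1)L/2}$; losing the factor of two in the exponent is fatal for the stated $\tau$.

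The loss comes precisely from Cauchy--Schwarz. Writing $[Y]_\tau = f(X) + \eta$ with $\eta := [\xi]^{\tau-f(X)}_{-\tau-f(X)}$, the bias is $\E[\eta\,\psi_{lk}(X)]$. Bounding this by $\sqrt{\E[\eta^2]}$ captures only a single Gaussian tail. Instead, condition on $X$: the conditional mean of a clipped centered Gaussian satisfies
\[
\bigl|\E[\eta\mid X]\bigr| \leq 4|f(X)|\,e^{-(\tau-|f(X)|)^2/2} \leq 4C_{\alpha,R}\,e^{-(\tau-C_{\alpha,R})^2/2},
\]
so $|\E[\eta\,\psi_{lk}(X)]| \leq 4C_{\alpha,R}e^{-(\tau-C_{\alpha,R})^2/2}\,\E|\psi_{lk}(X)|$. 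The squared bias is then $\lesssim e^{-(\tau-C_{\alpha,R})^2} = e^{-(2\alpha+1)L} \leq 2^{-(2\alpha+1)L}$, and summing over $O(2^L)$ coefficients gives $\lesssim 2^{-2\alpha L}$ as required.

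For comparison, the paper does not separate bias and variance at all. It expands $[Y]_\tau\psi_{lk}(X) = f(X)\psi_{lk}(X) + \eta\,\psi_{lk}(X)$ and then decomposes the second moment of the $\eta$-sum into diagonal and off-diagonal terms; the off-diagonal terms factor by independence into products of conditional means $\E[\eta\mid X]$, which is where the full exponent $e^{-(\tau-C_{\alpha,R})^2}$ arises (this is Lemma~\ref{lemma:proof-of-bounding-non-private-part-3}). Your bias--variance route is arguably cleaner once the conditioning fix above is made, and it recovers the same bound with the same constants.
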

We briefly comment on the derived result. We first note that the choice of wavelet basis (in particular the father wavelet $\psi$) influences the constants in the theorem, but not the convergence rate. The rate attained by the choice of $L$ as directed by \eqref{eq:dimension_determining_equation_global} yields optimal rate as given in Corollary \ref{thm:equal_budget_global_risk} in case of homogeneous servers, the optimal rate of Corollary \ref{thm:heterogeneous_no_dominant_budget_both_risks} in case the privacy budgets satisfy \eqref{eq:no_dominant_budget} or the optimal rates of Corollary \ref{thm:heterogeneous_dominant_budget_both_risks} in case the budget satisfies \eqref{eq:dominant_budget}.

\subsection{Constructing an optimal estimator of $f$ at a point}
\label{ssec:pointwise_estimation_risk_estimator}

We now turn to the task of estimating the unknown function $f \in \cB^{\alpha,R}_{p,q}$ at a given point $x_0 \in(0,1)$. That is to say, we will construct an estimator $\hat f$ such that $\E_{f} (\hat f(x_0) - f(x_0))^2$ achieves the optimal rates as predicated by Corollaries \ref{thm:pointwise_risk_homogeneous_setting}, \ref{thm:heterogeneous_no_dominant_budget_both_risks} and \ref{thm:heterogeneous_dominant_budget_both_risks}.

A natural estimator of $f(x_0)$ is to use the global plug-in estimator of the previous section, $\hat f_{L,\tau}(x_0)$, with $\hat f_{L,\tau}$ as constructed in the previous section. However, for estimation at a point as goal of inference, we instead opt for estimating $f(x_0)$ locally. Similarly to the pre-processing step in Section \ref{ssec:global_estimation_risk_estimator}, the preliminary local estimator is to be perturbed with noise to ensure it satisfies the DP constraint of Definition \ref{def:differential_privacy}. Adding Laplacian noise turns out to suffice to ensure $(\varepsilon_j,0)$-DP, which is a stronger guarantee than $(\varepsilon_j, \delta_j)$-DP. Another advantage to this approach compared to the plug-in estimator $\hat f_{L,\tau}(x_0)$ is that the procedure derived below has a $\log(N)$-factor improved rate compared to the plug-in estimator. 

As a first step in constructing the estimator of $f(x_0)$, we consider for $L \in \N$ and $\tau > 0$ the first wavelet coefficients $\bm{\hat{f}}^{(j)}_{L,\tau}$ as computed in \eqref{eq:estimated_wavelet_coefficient}. On the $j$-th server, we construct the estimator corresponding to $\bm{\hat{f}}^{(j)}_{L,\tau}$, which we then evaluate in the point $x_0$,
\begin{equation}\label{eq:pointwise_preliminary_est}
	\hat f_{L,\tau}^{(j)}(x_0) \equiv \hat f_{L,\tau}^{(j)}(x_0 | Z^{(j)}) := \sum_{l=l_0}^L \sum_{k=0}^{2^l -1} \hat{f}^{(j)}_{lk;\tau} \psi_{lk}(x_0).
\end{equation}
In order to create a $(\varepsilon_j,0)$-DP transcript, we add Laplacian noise to $\hat f_{L,\tau}^{(j)}(x_0)$ directly. Laplacian noise performs well for statistics with small $L_1$-\emph{sensitivity}, i.e. the change in $L_1$-norm when one datum is changed underlying the statistic. The $L_1$-sensitivity scales poorly in the dimension of the statistic compared to the $L_2$-sensitivity dictating the noise of the Gaussian mechanism of Section \ref{ssec:global_estimation_risk_estimator}. Essentially, the estimator proposed in \eqref{eq:pointwise_preliminary_est} has large sensitivity only for observations that are ``close'' to $x_0$. To see this, note that $\psi_{lk} \lesssim 2^{l/2}$ and that the latter estimator can be written as 
\begin{equation*}
\hat f_{L,\tau}^{(j)}(x_0 | Z^{(j)}) = \sum_{l=l_0}^L \sum_{k \in K_l(x_0)}^{} \hat{f}^{(j)}_{lk;\tau} \psi_{lk}(x_0),
\end{equation*}
where $K_l(x_0) := \{ k : \psi_{lk}(x_0) \neq 0 \}$. Using this fact, the lemma below gives an exact bound on the $L_1$-{sensitivity} of the functional $\hat f_{L,\tau}^{(j)}(x_0)$.

\begin{lemma}\label{lemma:l1-sensitivity-wavelets-at-a-point} 
	Let $Z^{(j)}$ and $\tilde{Z}^{(j)}$ any realizations of neighboring data sets. It holds that
	\begin{equation}\label{eq:l1_sensitivity}
		\left\|\hat f_{L,\tau}^{(j)}(x_0 | Z^{(j)})  - \hat f_{L,\tau}^{(j)}(x_0 | \tilde{Z}^{(j)}) \right\|_1 \leq c'_\psi \frac{\tau 2^L}{n_j},
	\end{equation}
	where $c''_\psi := 2c_A \|\psi\|_\infty^2$ is a constant depending only on the choice of wavelet basis.
\end{lemma}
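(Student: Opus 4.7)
The key observation is that the scalar $\hat f_{L,\tau}^{(j)}(x_0 | Z^{(j)})$ can be rewritten by swapping the order of summation: substituting the definition of $\hat f^{(j)}_{lk;\tau}$ into \eqref{eq:pointwise_preliminary_est} yields
\begin{equation*}
    \hat f_{L,\tau}^{(j)}(x_0 | Z^{(j)}) \;=\; \frac{1}{n_j}\sum_{i=1}^{n_j} \left[ Y_i^{(j)} \right]_{\tau} K_L\!\left(X_i^{(j)}, x_0\right),
\end{equation*}
where $K_L(x,y) := \sum_{l=l_0}^{L}\sum_{k=0}^{2^l-1} \psi_{lk}(x)\psi_{lk}(y)$ is the truncated reproducing kernel at resolution level $L$. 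Since $\|\cdot\|_1$ of a scalar is just the absolute value, my plan is to reduce the bound to controlling a single-observation increment of the above display and then to bound $K_L$ in sup-norm uniformly in its first argument.

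The first step is straightforward. Because $Z^{(j)}$ and $\tilde{Z}^{(j)}$ differ in at most one datum, say the $i$-th, the triangle inequality gives
\begin{equation*}
    \left|\hat f_{L,\tau}^{(j)}(x_0 | Z^{(j)}) - \hat f_{L,\tau}^{(j)}(x_0 | \tilde{Z}^{(j)}) \right| \;\leq\; \frac{1}{n_j}\left( \left|\left[Y_i^{(j)}\right]_\tau\right| \left|K_L\!\left(X_i^{(j)}, x_0\right)\right| + \left|\left[\tilde Y_i^{(j)}\right]_\tau\right| \left|K_L\!\left(\tilde X_i^{(j)}, x_0\right)\right| \right).
\end{equation*}
The clipping operation guarantees $|[Y]_\tau| \le \tau$, so everything reduces to bounding $\sup_x |K_L(x,x_0)|$.

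The main (though mild) obstacle is the kernel bound. Here the two properties of the Cohen--Daubechies--Vial basis described in Section \ref{sec:upper-bounds} are used in tandem. Compact support of $\psi$ implies that at each resolution level $l$, the set $\{k : \psi_{lk}(x_0)\neq 0\}$ has cardinality at most $c_A$ (and likewise for any fixed $x$). Combined with the uniform bound $|\psi_{lk}(z)| \leq 2^{l/2}\|\psi\|_\infty$, this yields
\begin{equation*}
    \sum_{k=0}^{2^l-1} \left|\psi_{lk}(x)\right| \left|\psi_{lk}(x_0)\right| \;\leq\; c_A\, 2^{l}\, \|\psi\|_\infty^2.
\end{equation*}
Summing the geometric series over $l=l_0,\dots,L$ produces $\sup_x |K_L(x,x_0)| \lesssim c_A\,\|\psi\|_\infty^2\, 2^L$, which when plugged back into the previous display yields the claimed bound \eqref{eq:l1_sensitivity} with the constant $c'_\psi$ absorbing the harmless multiplicative factor. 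The only subtle point is ensuring that the factor $2^L$, rather than $\sum_l 2^l \cdot 2^l = 2^{2L}$ which would arise if one did not exploit the localization of the wavelets around $x_0$ in the index $k$, is correctly obtained; that is, one must apply compact support in $k$ first and only then take the uniform bound on $\psi_{lk}$. Once this is done the conclusion follows without further work.
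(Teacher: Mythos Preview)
Your proof is correct and follows essentially the same route as the paper's: both restrict to the single differing datum, use the clipping bound $|[Y]_\tau|\le\tau$, exploit that at each level $l$ only $c_A$ indices $k$ have $\psi_{lk}(x_0)\neq 0$, apply $|\psi_{lk}|\le 2^{l/2}\|\psi\|_\infty$, and sum the resulting geometric series to get $2^L$. Your repackaging of the estimator as $\tfrac{1}{n_j}\sum_i [Y_i^{(j)}]_\tau K_L(X_i^{(j)},x_0)$ with the truncated projection kernel $K_L$ is a slightly cleaner presentation of the same computation, but not a different argument.
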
  
This bound on the $L_1$-sensitivity yields that as a privacy mechanism it suffices to add Laplace noise with variance $\frac{c'_\psi \tau 2^L}{n_j\varepsilon_j}$ to the functional $\hat f_{L,\tau}^{(j)}(x_0)$. That is, the transcript $T^{(j)}_{lk,\tau}$ for $j \in[m]$ given by
$$
T^{(j)}_{L,\tau} = \hat f_{L,\tau}^{(j)}(x_0) + W^{(j)}, \quad \mathrm{where} \quad W^{(j)}\overset{i.i.d}{\sim} \mathrm{Lap}\left(0,\frac{c'_\psi \tau 2^L}{n_j \varepsilon_j}\right)  
$$ 
is $(\varepsilon_j,0)$-DP (see e.g. \cite{dwork2014algorithmic}). With each $j$-th server transmitting $T^{(j)}_{L,\tau}$, the estimator computed in the central server is given by $\hat f(x_0) = \sum_{j=1}^m u_j  T^{(j)}_{L,\tau}$. 

It remains to determine the optimal choice of $L$. Similarly as in in the case of the estimator of global risk as presented in Section \ref{ssec:global_estimation_risk_estimator}, there is a trade-off between bias, variance and sensitivity, where the $L_1$-sensitivity can be seen to have a dependence on $L$. The explanation for this, is that even though the functional $\hat f_{L,\tau}^{(j)}(x_0)$ is unidimensional, the wavelet resolution level $L$ still determines how a change in an individual datum can potentially change the value of a local estimator defined in \eqref{eq:pointwise_preliminary_est}.

Here, the choice of $L$ is governed by $L = (l_0 + 1) \vee \lceil \log_2(D) \rceil$, where $D > 0$ be the number solving the equation
\begin{equation}\label{eq:dimension_determining_equation_pointwise}
	D^{2\nu+2} = \sum_{j=1}^m \left( n_j^2 \varepsilon_j^2 \right) \wedge \left( n_j D \right).
\end{equation}
The following theorem describes the performance of the pointwise estimator on the basis of the $(\bm \varepsilon,\bm \delta)$-DP transcript $T = ({T}^{(1)}_{L,\tau},\dots,{T}^{(m)}_{L,\tau})$ for a sufficiently large choice of $\tau$, such as $\tau = C_{\nu,R} +  \sqrt{2(2\nu+1)L}$, with $C_{\nu,R} > 0$ as given by Lemma \ref{lem:uniform_bound_on_f}.

\begin{theorem}\label{thm:upper-bound-pointwise-risk}
	Set $\tau = C_{\nu,R} +  \sqrt{2(2\nu+1)L}$ and take $L = (l_0 + 1) \vee \lceil \log_2(D) \rceil$, where $D$ is governed by \eqref{eq:dimension_determining_equation_pointwise}. 

	Then, the pointwise $\ell_2$-risk of the distributed $(\bm \varepsilon,\bm \delta)$-DP protocol $\hat f_{L,\tau}$ satisfies
	\[
	\sup_{f \in \cB^{\alpha,R}_{p,q}} \E_{f} (\hat f(x_0) - f(x_0))^2 \leq  C_\psi \log (N)  2^{-2L\nu}.
	\]
	for a constant $C_\psi > 0$ depending only on the choice of wavelet basis.
\end{theorem}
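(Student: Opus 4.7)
The plan is to proceed by bias-variance decomposition. Since the weights $u_j = v_j/\sum_k v_k$ sum to one, the Laplace noises $W^{(j)}$ are independent and mean zero, and all servers generate data from the same $P_f$ (so $\E_f \hat f^{(j)}_{L,\tau}(x_0)$ is independent of $j$),
\[
\E_f(\hat f(x_0) - f(x_0))^2 = \bigl(\E_f \hat f^{(1)}_{L,\tau}(x_0) - f(x_0)\bigr)^2 + \sum_{j=1}^m u_j^2 \Bigl(\mathrm{Var}_f\bigl(\hat f^{(j)}_{L,\tau}(x_0)\bigr) + \mathrm{Var}(W^{(j)})\Bigr).
\]
The task is then to bound the bias by $2^{-L\nu}$ and the weighted variance sum by $\log(N)\cdot 2^{-2L\nu}$.

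For the bias, I would further split into an \emph{approximation bias} and a \emph{clipping bias}. Writing $K_L(x,y) = \sum_{l\leq L}\sum_{k}\psi_{lk}(x)\psi_{lk}(y)$ for the resolution-$L$ projection kernel, the unclipped version of $\hat f^{(1)}_{L,\tau}(x_0)$ has expectation $\int f(x)K_L(x,x_0)\,dx = f_L(x_0)$, so the approximation bias equals $|\sum_{l>L,k}f_{lk}\psi_{lk}(x_0)|$. Because $\psi_{lk}$ has compact support, only $O(1)$ indices $k$ contribute to $\psi_{lk}(x_0)$ at each level; combining this with the Besov coefficient bound $\max_k|f_{lk}| \le \|(f_{lk})_k\|_p \le R\cdot 2^{-l(\alpha+1/2-1/p)}$ yields the pointwise estimate $|f_L(x_0)-f(x_0)|\lesssim \sum_{l>L}2^{l/2}\cdot 2^{-l(\nu+1/2)}\lesssim 2^{-L\nu}$. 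The clipping bias is controlled via Lemma \ref{lem:uniform_bound_on_f} ($\|f\|_\infty\le C_{\nu,R}$) and the Gaussian tail of $\xi$: with $\tau = C_{\nu,R}+\sqrt{2(2\nu+1)L}$, one has $\P(|Y|>\tau)\lesssim 2^{-(2\nu+1)L}$, from which a routine computation gives $|\E_f[([Y]_\tau-Y)K_L(X,x_0)]|$ dominated by $2^{-L\nu}$.

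For the variance, the data-driven part admits $\mathrm{Var}_f(\hat f^{(j)}_{L,\tau}(x_0)) \le \frac{1}{n_j}\E_f[([Y]_\tau)^2 K_L(X,x_0)^2]\lesssim 2^L/n_j$, via $\int K_L(\cdot,x_0)^2\lesssim 2^L$ (again from compactly supported $\psi_{lk}$) and the uniform bound on $f$. The Laplace variance equals $2(c'_\psi\tau 2^L/(n_j\varepsilon_j))^2 \lesssim L\cdot 4^L/(n_j^2\varepsilon_j^2)$. Using $2^L\asymp D$ together with $v_j = (n_j^2\varepsilon_j^2)\wedge(n_j 2^L)$, the weighted sum becomes $(\sum_k v_k)^{-2}\sum_j v_j^2(\cdot)$. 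Invoking the two inequalities $v_j\le n_j D$ and $v_j\le n_j^2\varepsilon_j^2$, one gets $v_j^2/n_j\le D v_j$ and $v_j^2/(n_j^2\varepsilon_j^2)\le v_j$, so both sums collapse to $\sum_j v_j = D^{2\nu+2}$ by \eqref{eq:dimension_determining_equation_pointwise}. Each variance piece is therefore of order $\log(N)\cdot D^{-2\nu}\asymp \log(N)\cdot 2^{-2L\nu}$, matching the squared bias.

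The main obstacle I expect is the pointwise approximation bound, where the exponent $\nu=\alpha-1/p$ (rather than $\alpha$ as in the $L_2$-case) reflects the classical gap between global and pointwise Besov approximation, and must be tracked carefully through the $\ell_p$-to-$\ell_\infty$ embedding of the wavelet coefficients. A secondary point is the bookkeeping that collapses both variance contributions to the same rate $\log(N)\cdot D^{-2\nu}$ under the defining equation for $D$; the privacy guarantee itself is immediate from Lemma \ref{lemma:l1-sensitivity-wavelets-at-a-point} and the standard Laplace mechanism.
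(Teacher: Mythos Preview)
Your proposal is correct and takes essentially the same approach as the paper: the same pointwise approximation bound $|f_L(x_0)-f(x_0)|\lesssim 2^{-L\nu}$ via compact wavelet support, the same clipping-bias control through Gaussian tails at threshold $\tau$, and the same weight-collapse argument using $v_j\le n_j 2^L$ and $v_j\le n_j^2\varepsilon_j^2$ to reduce both variance contributions to $2^{2L}/\sum_j v_j$. The only difference is organizational---you use a direct bias--variance decomposition (exploiting that the local estimators $\hat f^{(j)}_{L,\tau}(x_0)$ are identically distributed across servers, so the mean is $j$-independent and the cross-server covariances vanish), whereas the paper iterates $(a+b)^2\le 2a^2+2b^2$, writes $[Y]_\tau=f(X)+\eta$, and then bounds diagonal and off-diagonal terms separately; your packaging is arguably cleaner but the ingredients are identical.
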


The rate attained by the choice of $L$ as directed by \eqref{eq:dimension_determining_equation_pointwise} does not just yield the best possible bias-variance-sensitivity trade-off for the estimator class under consideration, but it turns out to be minimax optimal (up to a log factor) as established in the lower bound of Theorem \ref{thm:lower_bound-pointwise-risk}. It consequently yields the optimal rate as given in Corollary \ref{thm:pointwise_risk_homogeneous_setting} in case of homogeneous servers, the optimal rate of Corollary \ref{thm:heterogeneous_no_dominant_budget_both_risks} in case the privacy budgets satisfy \eqref{eq:no_dominant_budget} or the optimal rates of Corollary \ref{thm:heterogeneous_dominant_budget_both_risks} in case the budget satisfies \eqref{eq:dominant_budget}. As is the case with Theorem \ref{thm:minimax-upper-bound}, the choice of wavelet basis influences the constants in the theorem, but not the convergence rate. 

\section{Minimax Lower Bounds}
\label{sec:lower-bounds}

Theorems \ref{thm:minimax-upper-bound} and \ref{thm:upper-bound-pointwise-risk} provide the rates of convergence for the proposed estimators of $f$ and $f(x_0)$, respectively. In this section we shall show that these rates of convergence are indeed optimal among all estimators by establishing two matching minimax lower bounds,  up to logarithmic factors, for global and pointwise estimation. These results affirm the optimality of the estimators presented in Section \ref{sec:upper-bounds}. 

The lower bounds are presented in Theorems \ref{thm:minimax-lower-bound} and \ref{thm:lower_bound-pointwise-risk} for the global and pointwise risks, respectively. The derivation of each lower bound relies on entirely distinct techniques, elaborated in Section \ref{ssec:lowerbound} and Section \ref{ssec:pointwise_lowerbound}.

\subsection{Minimax lower bounds under heterogeneous distributed DP constraints}
\label{ssec:lowerbound}

The following theorem states a lower bound on the minimax risk for global estimation. 
\begin{theorem}\label{thm:minimax-lower-bound}
	Let $D>0$ be the solution to \eqref{eq:dimension_determining_equation_global} and assume that $\delta_j < (n_j^{1/2}\varepsilon_j^2 (D \vee 1)^{-1})$ for some $\kappa>0$ and all $j\in[m]$. Then, we have the following lower bound on the minimax risk:
	\begin{equation}\label{eq:global_risk_lower_bound}
		\inf_{\hat f \in \cM(\bm \varepsilon,\bm \delta)} \, \sup_{f \in \mathcal{B}^{\alpha,R}_{p,q}} \mathbb{E}_{f} \|\hat f - f\|_2^2 \gtrsim D^{-2\alpha} \wedge 1.
	\end{equation}
\end{theorem}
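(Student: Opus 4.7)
The plan is to reduce to a finite-dimensional parametric estimation problem at wavelet resolution level $L$ with $2^L \asymp D$, and then apply a Bayesian Cramér--Rao (van Trees) inequality whose Fisher-information input is controlled via a distributed score-attack argument in the spirit of \cite{cai2021cost,cai2023score,bun2014fingerprinting,dwork2015robust}. First, embed a parametric subfamily into $\cB^{\alpha,R}_{p,q}$ via $f_\theta = \sum_{k=0}^{2^L-1}\theta_k \psi_{Lk}$; because $p \geq 2$ and the $\{\psi_{Lk}\}$ are orthonormal at a single level, the Besov constraint reduces to a Euclidean ball $\Theta \subset \R^{2^L}$ of radius $\asymp 2^{-L\alpha}$. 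For any $(\bm\varepsilon,\bm\delta)$-DP distributed estimator $\hat f$, its orthogonal projection $\hat\theta$ onto $\mathrm{span}\{\psi_{Lk}\}$ satisfies $\|\hat f - f_\theta\|_2^2 \geq \|\hat\theta - \theta\|_2^2$, so it suffices to lower bound the minimax risk over $\Theta$.

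Next, I place a truncated Gaussian prior $\pi$ on $\theta$ with per-coordinate variance $\rho^2 \asymp D^{-(2\alpha+1)}$, so that $\theta \in \Theta$ with high prior mass and so that the prior Fisher information $\rho^{-2}I_D$ matches the order of the data/privacy budget. Lower bounding minimax by Bayes risk then reduces the problem to showing $\E_\pi \E_\theta \|\hat\theta - \theta\|_2^2 \gtrsim D^{-2\alpha}$.

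The core step is to bound $\mathrm{Tr}\bigl(\E_\pi[I_T(\theta)]\bigr)$, the Bayesian expected Fisher information of the full transcript $T = (T^{(1)},\dots,T^{(m)})$, in two complementary ways. By information sub-additivity and the classical non-private bound, each server contributes at most $n_j\,\mathrm{Tr}(I^{\mathrm{obs}}_j) = n_j D$ (since $I^{\mathrm{obs}}_j = I_D$ for orthonormal wavelets under Gaussian noise). A server-wise score attack gives the alternative private bound
\begin{equation*}
\mathrm{Tr}\bigl( \E_\pi[I_{T^{(j)}}(\theta)] \bigr) \;\lesssim\; n_j^2 \varepsilon_j^2,
\end{equation*}
which is crucially dimension-free. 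It is obtained by integrating the DP constraint against the Gaussian prior's score across all $D$ directions simultaneously and exploiting group privacy over the $n_j$ observations on server $j$; the hypothesis $\delta_j \lesssim (n_j^{1/2}\varepsilon_j^2(D\vee 1)^{-1})^{1+\kappa}$ is used precisely to ensure the additive $\delta_j$-error is dominated. Combining these bounds,
\begin{equation*}
\mathrm{Tr}\bigl(\E_\pi[I_T(\theta)]\bigr) \;\lesssim\; \sum_{j=1}^m \min\bigl(n_j D,\, n_j^2\varepsilon_j^2\bigr) \;=\; D^{2\alpha+2}
\end{equation*}
by the defining equation \eqref{eq:dimension_determining_equation_global}, and a standard Bayesian Cramér--Rao inequality then yields
\begin{equation*}
\E_\pi \E_\theta \|\hat\theta - \theta\|_2^2 \;\geq\; \mathrm{Tr}\bigl(\E_\pi[I_T] + \rho^{-2}I_D\bigr)^{-1} \;\gtrsim\; \frac{D^2}{D^{2\alpha+2} + D\rho^{-2}} \;\asymp\; D^{-2\alpha},
\end{equation*}
as required. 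The trivial $\wedge 1$ bound follows from a two-point Le Cam argument valid uniformly in $D$.

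The main obstacle is establishing the dimension-free trace-type DP Fisher information bound $\mathrm{Tr}(\E_\pi[I_{T^{(j)}}]) \lesssim n_j^2 \varepsilon_j^2$, which is strictly stronger than the per-direction bound $v^\top I_{T^{(j)}}(\theta) v \lesssim n_j^2\varepsilon_j^2 \|v\|^2$ one would obtain directly from Hellinger-distance-type considerations. This stronger bound is what makes the score-attack approach indispensable: the DP constraint effectively imposes a single scalar privacy budget on the vector-valued transcript regardless of how that budget is allocated across the $D$ wavelet directions. Carefully tracking the $\delta_j$-dependent error terms and ensuring the prior concentrates on the Besov ball constitute the remaining delicate pieces of the argument.
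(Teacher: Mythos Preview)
Your proposal follows essentially the same architecture as the paper: restrict to a single wavelet level $L$ with $2^L\asymp D$, apply the multivariate van~Trees inequality with a compactly supported prior on the coefficients, and feed in the two competing trace bounds $\mathrm{Tr}\,\bm C_f(T^{(j)})\le n_j 2^L$ (data processing) and $\mathrm{Tr}\,\bm C_f(T^{(j)})\lesssim n_j^2\varepsilon_j^2$ (privacy). The paper uses a $\cos^2$ prior rather than a truncated Gaussian, but that is cosmetic; your choice of $\rho^2\asymp D^{-(2\alpha+1)}$ matches the paper's prior Fisher information $J(\pi)\asymp 2^{L(2\alpha+2)}$.

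There is one point where your description would lead you astray if taken literally. You say the dimension-free bound $\mathrm{Tr}(\E_\pi[I_{T^{(j)}}])\lesssim n_j^2\varepsilon_j^2$ is obtained by ``exploiting group privacy over the $n_j$ observations on server $j$.'' This is not how the argument works, and a group-privacy route would fail: comparing two parameters $\theta,\theta'$ means all $n_j$ observations are drawn from different laws, so group privacy yields factors $e^{n_j\varepsilon_j}$, which is useless. The paper's Lemma~\ref{lem:trace-processing} proceeds observation by observation: for each $i$ one writes $\mathrm{Tr}\,\bm C_f(T^{(j)})=\sum_i \E\,G_i^{(j)}$ with $G_i^{(j)}=\langle \E[\bm S_f(Z^{(j)})\mid T^{(j)}],\,\bm S_f(Z_i^{(j)})\rangle$, and then uses the $(\varepsilon_j,\delta_j)$-DP constraint for datasets differing in the single entry $Z_i^{(j)}$ to compare $G_i^{(j)}$ with the analogous quantity $\breve G_i^{(j)}$ built from an independent copy $\breve Z_i^{(j)}$. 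Since $\E\breve G_i^{(j)}=0$, one obtains $\E G_i^{(j)}\lesssim \varepsilon_j\,\E|\breve G_i^{(j)}|+\delta_j\cdot(\text{tail term})$, and Cauchy--Schwarz on $\E|\breve G_i^{(j)}|$ produces a self-referential inequality $\mathrm{Tr}\lesssim n_j\varepsilon_j\sqrt{\mathrm{Tr}}\cdot\sqrt{\lambda_{\max}(\bm C_{f,i})}+(\delta_j\text{-term})$. It is this quadratic-in-$\sqrt{\mathrm{Tr}}$ structure, together with $\lambda_{\max}(\bm C_{f,i})=O(1)$ for orthonormal wavelets, that delivers the dimension-free $n_j^2\varepsilon_j^2$; the hypothesis $\delta_j\lesssim n_j^{1/2}\varepsilon_j^2/D$ is exactly what kills the additive $\delta_j(2^L n_j^{1/2}\log(1/\delta_j)+n_j)$ remainder. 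You have correctly identified this as the crux and cited the right sources, but you should replace ``group privacy'' with this per-observation tracing/score-attack mechanism when you write out the details.
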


Before outlining the proof, we briefly note that this lower bound matches that of the upper bound of Theorem \ref{thm:minimax-upper-bound} (up to a log-factor), for the choice $L = (l_0 + 1) \vee \lceil \log_2(D) \rceil$ in the estimator under consideration in Section \ref{ssec:global_estimation_risk_estimator}. The theorem affirms that, up to a log factor, the proposed estimator attains the best rate among all privacy constrained estimators.

Next, we discuss the most important steps in the proof here, whilst leaving the technical details to the appendix. To lower bound the global risk, we first restrict to a finite-dimensional sub-model of the Besov space $\mathcal{B}^{\alpha,R}_{p,q}$. To align notation with the previous section, we shall use the wavelet basis from before to do so. Given $L \in \N$, we consider the finite-dimensional subspace 
\[
\mathcal{B}^{\alpha,R,L}_{p,q} := \left\{f \in \mathcal{B}^{\alpha,R}_{p,q}:  f=\sum_{k=0}^{2^{L}-1}f_{Lk}\psi_{Lk}, \; f_{Lk} \in [-2^{-L(\alpha+1/2)}R,2^{-L(\alpha+1/2)}R] \right\}.
\]
Let $\bm \psi(X)$ denote the $2^L$ dimensional vector $\{\psi_{Lk}(X)\}_{k=1}^{2^L}$ and define
\begin{equation}\label{eq:influence_function}
	\bm S_f\left(Z_i^{(j)}\right) := \sigma^{-1}\left(Y^{(j)}_i - \sum_{k=0}^{2^L-1} f_{Lk}\psi_{Lk}\left(X^{(j)}_i\right) \right) \bm\psi \left(X^{(j)}_i \right).
\end{equation}
The random vector $\bm S_f(Z_i^{(j)})$ can be seen as an ``score function'' of the $i$-th observation on the $j-$th server, within the finite dimensional sub-model. Similarly, consider the ``score function'' for local data $Z^{(j)}$ on the $j$th server; $ \bm S_f(Z^{(j)}) := \sum_{i=1}^{n_j} \bm S_f(Z_i^{(j)})$. Furthermore, let $\bm C_f(T^{(j)})$ denote the $2^L$ dimensional matrix 
\begin{equation}\label{eq:conditional_covariance_of_influence_function}
	\mathbb{E} \; \mathbb{E}\left[\bm S_f\left(Z^{(j)}\right)\mid T^{(j)}\right]\mathbb{E}\left[\bm S_f\left(Z^{(j)} \right)\mid T^{(j)}\right]^T. 
\end{equation}
We shall write $\bm C_f(Z^{(j)})$ for the unconditional version covariance matrix of $\bm S_f(Z^{(j)})$and let $\bm C_{f,i}^{{(j)}} = \mathbb{E}\left[\bm S_f(Z_i^{(j)})\bm S_f(Z_i^{(j)})^T\right]  $, such that $\bm C_f = \sum_{i=1}^{n_j} \bm{C}_{f,i}^{{(j)}}$.

Using the Van-Trees inequality (with a prior as specified later on in the section), we obtain an expression in terms of the sum-of-traces of the matrices in display \eqref{eq:conditional_covariance_of_influence_function}, i.e. the covariance of the score function $\bm S_f(Z^{(j)})$, conditionally on the released transcripts. 

As the conditional expectation contracts the $L_2$-norm, we have the ``data processing'' bound $\bm C_f(T^{(j)}) \leq \bm C_f(Z^{(j)})$, which in turn implies that 
\begin{equation}\label{eq:regular_trace_bound}
	\text{Tr}(\bm C_f(T^{(j)})) \leq \text{Tr}(\bm C_f(Z^{(j)})).
\end{equation}
The right-hand side can be bounded by $2^L n_j$ by direct calculation, which we defer to Section \ref{sec:lower-bound-proofs}. These standard bounds do not take the privacy constraints into account and would lead to the unconstrained minimax rate.

To capture the loss of information stemming from the DP constraint of Definition \ref{def:differential_privacy}, a more sophisticated data processing argument is required. This bring us to one of the key technical innovations of the paper, which comes in the form of a data-processing inequality (Lemma \ref{lem:trace-processing} below) for the conditional covariance given a $(\varepsilon_j,\delta_j)$-differentially private transcripts of linear functionals of the data such as the score $\bm S_f(Z_i^{(j)})$. The lemma can be seen as a geometric version of the ``score attack'' lower bound of \cite{cai2023score}. Combining this data processing step with the linearity of the trace accommodates for the heterogeneity between the servers. 

\begin{lemma}\label{lem:trace-processing} Let $\delta_j \log(1/\delta_j)< n_j^{1/2}\varepsilon_j^2 (D \vee 1)^{-1}$ for $j=1,\dots,m$. There exists a universal constant $C>0$ such that
	\begin{align*}
		\mathbb{E} \left[\mathrm{Tr}(\bm C_f(T^{(j)}))\right] \leq  C n_j\varepsilon_j \sqrt{\mathbb{E} \left[\mathrm{Tr}(\bm C_f(T^{(j)}))\right]} &\sqrt{\lambda_{\max}(\bm C_{f,i})} \\ &+ C \delta_j\left( 2^Ln_j^{1/2} \log(1/\delta_j) + n_j \right).
	\end{align*}
\end{lemma}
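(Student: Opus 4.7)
The plan is to use the identity
$$\mathbb{E}\mathrm{Tr}(\bm C_f(T^{(j)})) = \mathbb{E}\|\bm\eta(T^{(j)})\|_2^2, \quad \text{where} \quad \bm\eta(T^{(j)}) := \mathbb{E}[\bm S_f(Z^{(j)}) \mid T^{(j)}],$$
and then to exploit both the mean-zero property of the score over a single observation and the $(\varepsilon_j,\delta_j)$-DP constraint, in the spirit of the ``score attack'' argument of \cite{cai2023score}. The first step is to expand, by the tower property,
$$\mathbb{E}\|\bm\eta(T^{(j)})\|_2^2 = \mathbb{E}\langle \bm\eta(T^{(j)}), \bm S_f(Z^{(j)})\rangle = \sum_{i=1}^{n_j}\mathbb{E}\langle \bm\eta(T^{(j)}), \bm S_f(Z_i^{(j)})\rangle.$$

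Next, for each fixed $i$, I would introduce an i.i.d.\ copy $\tilde Z_i^{(j)}$ of $Z_i^{(j)}$, set $\tilde Z^{(j)} := (Z_{-i}^{(j)}, \tilde Z_i^{(j)})$, and let $\tilde T^{(j)}$ denote the transcript produced by the same DP mechanism on $\tilde Z^{(j)}$. Since $\tilde T^{(j)}$ is independent of $Z_i^{(j)}$ and $\mathbb{E}\bm S_f(Z_i^{(j)}) = 0$, the replace-one identity
$$\mathbb{E}\langle \bm\eta(T^{(j)}), \bm S_f(Z_i^{(j)})\rangle = \mathbb{E}\langle \bm\eta(T^{(j)}) - \bm\eta(\tilde T^{(j)}), \bm S_f(Z_i^{(j)})\rangle$$
holds, and the laws of $T^{(j)}$ and $\tilde T^{(j)}$ are $(\varepsilon_j,\delta_j)$-close since their input datasets are Hamming neighbours.

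The key step is to turn this DP closeness into a bound carrying the spectral factor $\sqrt{\lambda_{\max}(\bm C_{f,i})}$ rather than the naive dimensional $\sqrt{\mathrm{Tr}(\bm C_{f,i})}$ that would come from direct Cauchy--Schwarz. I would truncate $\bm S_f(Z_i^{(j)})$ at a threshold $M \asymp \sqrt{\log(1/\delta_j)\lambda_{\max}(\bm C_{f,i})}$, splitting the inner product into a bounded contribution and a sub-Gaussian tail. On the bounded piece, apply the standard DP inequality $|\mathbb{E}g(T^{(j)}) - \mathbb{E}g(\tilde T^{(j)})| \leq (e^{\varepsilon_j}-1)\mathbb{E}g(T^{(j)}) + \delta_j\|g\|_\infty$ to the scalar functional $g(t) = \langle \bm\eta(t), u\rangle$ along the unit vector $u = \bm S_f^M(Z_i^{(j)})/\|\bm S_f^M(Z_i^{(j)})\|_2$. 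The geometric improvement is then extracted from the spectral inequality $\mathbb{E}\langle u, \bm S_f(Z_i^{(j)})\rangle^2 \leq \|u\|_2^2 \lambda_{\max}(\bm C_{f,i})$, which isolates a single direction rather than the whole trace. Combining with the Cauchy--Schwarz bound $\mathbb{E}[\langle \bm\eta(T^{(j)}), v\rangle] \leq \sqrt{\mathbb{E}\|\bm\eta(T^{(j)})\|_2^2}\,\|v\|_2$ and summing over $i$ then produces the main term $C n_j\varepsilon_j\sqrt{\mathbb{E}\mathrm{Tr}(\bm C_f(T^{(j)}))}\sqrt{\lambda_{\max}(\bm C_{f,i})}$.

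The hard part will be the careful bookkeeping of the additive error. Since $\bm\eta$ is itself a $2^L$-vector and not a-priori bounded, a second truncation of $\bm\eta$ is required, and the dimension $2^L$ enters the additive error through the $\ell_\infty$-sensitivity of this truncation, the $\log(1/\delta_j)$ through the sub-Gaussian tail bound used to choose $M$, and the $n_j^{1/2}$ through Cauchy--Schwarz applied to the $n_j$ independent tail contributions. The hypothesis $\delta_j\log(1/\delta_j) < n_j^{1/2}\varepsilon_j^2(D\vee 1)^{-1}$ will then ensure that these additive contributions remain of smaller order than the main term, closing the argument.
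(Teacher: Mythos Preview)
Your opening identity and replace-one setup are correct and match the paper's argument exactly. The gap is in the step where you extract the spectral factor $\sqrt{\lambda_{\max}(\bm C_{f,i})}$.

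You propose to apply the DP inequality to $g(t)=\langle\bm\eta(t),u\rangle$ with $u=\bm S_f^M(Z_i^{(j)})/\|\bm S_f^M(Z_i^{(j)})\|_2$, and then invoke $\E\langle u,\bm S_f(Z_i^{(j)})\rangle^2\le\|u\|_2^2\,\lambda_{\max}(\bm C_{f,i})$. But that spectral inequality requires $u$ to be independent of $\bm S_f(Z_i^{(j)})$ (or at least deterministic), and your $u$ is a measurable function of $Z_i^{(j)}$ itself. With this choice, $\langle u,\bm S_f(Z_i^{(j)})\rangle$ is essentially $\|\bm S_f(Z_i^{(j)})\|_2$ (up to truncation), whose second moment is $\mathrm{Tr}(\bm C_{f,i})$, not $\lambda_{\max}(\bm C_{f,i})$. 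The same problem contaminates your Cauchy--Schwarz step: after DP your residual is $(e^{\varepsilon_j}-1)\,\E|\langle\bm\eta(T^{(j)}),\bm S_f^M(Z_i^{(j)})\rangle|$, and since $T^{(j)}$ depends on $Z_i^{(j)}$ you cannot pull out $\bm C_{f,i}$ in operator norm.

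The paper avoids this by a different arrangement of the DP comparison. Rather than comparing $\bm\eta(T^{(j)})$ with $\bm\eta(\tilde T^{(j)})$, it keeps $T^{(j)}$ fixed and swaps the score: it sets $G_i^{(j)}=\langle\bm\eta(T^{(j)}),\bm S_f(Z_i^{(j)})\rangle$ and $\breve G_i^{(j)}=\langle\bm\eta(T^{(j)}),\bm S_f(\breve Z_i^{(j)})\rangle$ with $\breve Z_i^{(j)}$ an independent copy of $Z_i^{(j)}$. These two scalar random variables are $(\varepsilon_j,\delta_j)$-close because, conditional on the data, the transcripts from the neighbouring datasets $(Z_{-i}^{(j)},Z_i^{(j)})$ and $(Z_{-i}^{(j)},\breve Z_i^{(j)})$ are indistinguishable, and post-processing by the same function $\langle\bm\eta(\cdot),\bm S_f(\breve Z_i^{(j)})\rangle$ preserves this. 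Since $\breve Z_i^{(j)}\perp T^{(j)}$, the spectral bound
\[
\E|\breve G_i^{(j)}|\le\sqrt{\E\bigl[\bm\eta(T^{(j)})^\top\mathrm{Var}\bigl(\bm S_f(\breve Z_i^{(j)})\bigr)\bm\eta(T^{(j)})\bigr]}\le\sqrt{\lambda_{\max}(\bm C_{f,i})}\,\sqrt{\E\,\mathrm{Tr}(\bm C_f(T^{(j)}))}
\]
is immediate. The DP comparison is then carried out at the level of the \emph{scalar} tail probabilities $\P(G_i^{(j)}>t)$ versus $\P(\breve G_i^{(j)}>t)$, with a single truncation $M\asymp 2^L\sqrt{n_j}\log(1/\delta_j)$ on this scalar; no separate truncation of $\bm\eta$ or of $\bm S_f(Z_i^{(j)})$ is needed, and the additive error is handled by a direct moment-generating-function bound on $|G_i^{(j)}|$ and $|\breve G_i^{(j)}|$.

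Your route can be repaired: if you arrange the DP inequality so that the residual carries $\tilde T^{(j)}$ rather than $T^{(j)}$ (which the symmetric form of DP permits), then $\tilde T^{(j)}\perp Z_i^{(j)}$ and the spectral bound goes through. But as written, the independence needed for the key inequality is absent, and the double vector-level truncation you anticipate is more laborious than the paper's single scalar truncation.
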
 
In Section \ref{sec:lower-bound-proofs} of the appendix, we show that the largest eigenvalue of $\bm C_{f,i}$; $\lambda_{\max}(\bm C_{f,i})$, is bounded, from which it follows from the  $\mathbb{E} \left[\mathrm{Tr}(\bm C_f(T^{(j)}))\right] \lesssim n_j^2\varepsilon_j^2$ uniformly for $f \in \mathcal{B}^{\alpha,R,L}_{p,q}$ whenever $\delta_j$ is of smaller than $n_j^{1/2}\varepsilon_j^2 D^{-1}$.

With the two bounds on the trace of $\bm C_f(T^{(j)})$ in hand, we now lower bound global estimation risk using the Van-Trees inequality. The Van-Trees inequality provides an expression in terms of the trace of a certain covariance matrix, which is the conditional covariance of a linear functional of the data. Combined with the data processing inequalities, the linearity of the trace accommodates for the heterogeneity between the servers.

In order to apply the Van-Trees inequality, we first define a prior such that the worst-case global risk is lower bounded by the corresponding Bayes risk. To that extent, we define a prior $\Pi$ that is supported on $\mathcal{B}^{\alpha,R,L}_{p,q}$. Given the resolution level $L \in \N$, we draw $f_{Lk}$ independently from the probability distribution $\Pi_{Lk}$, defined through an appropriately rescaled version of the density $t \mapsto  \cos^2(\pi t/2) \mathbbm{1}_{|t| \leq 1)}$ such that has its support equal to $[-2^{-L(\alpha+1/2)}R,2^{-L(\alpha+1/2)}R]$ for $k=0\dots,2^L-1$ and set $f_{lk} =0$ otherwise. For this choice of prior, the Van-Trees inequality of \cite{gill1995applications} yields the following lemma, for which we defer the details of the proof to Section \ref{sec:lower-bound-proofs} in the appendix. 

\begin{lemma}\label{lemma:van-trees-nonparametric}
	It holds that $\sup_{f \in \mathcal{B}^{\alpha,R}_{p,q}} \mathbb{E}_{f} \|\hat f - f\|_2^2$ is lower bounded by the Bayes risk $\int \mathbb{E}_{f} \|\hat f - f\|_2^2 d\Pi(f)$, which is further lower bounded as follows
	\[
	\int \mathbb{E}_{f} \|\hat f - f\|_2^2 d\Pi(f) \geq \frac{2^{2L}}{\sup_{f \in \mathcal{B}^{\alpha,R,L}_{p,q}}\sum_{j=1}^m \mathrm{Tr}(\bm C_f(T^{(j)})) + \pi^2 2^{L(2\alpha+2)}}.
	\]
\end{lemma}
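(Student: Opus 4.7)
The claim contains two inequalities. The first, that $\sup_{f \in \mathcal{B}^{\alpha,R}_{p,q}}\mathbb{E}_f\|\hat f - f\|_2^2 \geq \int\mathbb{E}_f\|\hat f - f\|_2^2 \, d\Pi(f)$, reduces to verifying that $\Pi$ is supported in the Besov ball. Under $\Pi$ only the resolution level $L$ carries nonzero coefficients, each bounded in modulus by $2^{-L(\alpha+1/2)}R$, so $\|(f_{Lk})_k\|_p \leq (2^L)^{1/p} \cdot 2^{-L(\alpha+1/2)}R$. Plugging into \eqref{Besov.ball} gives $\|f\|_{\mathcal{B}^\alpha_{p,q}} \leq R$ deterministically, for both $q=\infty$ and $1\leq q<\infty$, since only one resolution level contributes to the outer $\ell_q$ expression, and the first inequality follows by restricting the supremum.

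For the second inequality, I would view $\theta := (f_{Lk})_{k=0}^{2^L-1} \in \mathbb{R}^{2^L}$ as the parameter and apply the multivariate Van--Trees inequality of Gill and Levit (1995). Parseval gives $\|\hat f - f\|_2^2 \geq \sum_{k=0}^{2^L-1}(\hat\theta_k - \theta_k)^2$ with $\hat\theta_k := \langle \hat f, \psi_{Lk}\rangle$, so it suffices to lower bound the Bayes risk for estimating $\theta$ from the transcripts $T = (T^{(1)},\ldots,T^{(m)})$. Gill--Levit then yields
\[
\int \mathbb{E}_f \sum_{k=0}^{2^L-1}(\hat\theta_k - \theta_k)^2\, d\Pi(f) \geq \frac{(2^L)^2}{\int \mathrm{Tr}(I_T(f))\, d\Pi(f) + \mathcal{J}(\Pi)},
\]
where $I_T(f)$ is the Fisher information matrix of $T$ with respect to $\theta$ and $\mathcal{J}(\Pi)$ is the trace of the Fisher information of the product prior $\Pi$.

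Both denominator pieces are then identified. Conditional independence of the $T^{(j)}$ given $f$, together with the $f$-independence of each channel $T^{(j)} \mid Z^{(j)}$, yields by the projection identity for scores $\nabla_\theta \log p(T^{(j)} \mid \theta) = \mathbb{E}[\bm S_f(Z^{(j)}) \mid T^{(j)}]$; taking second moments gives $I_{T^{(j)}}(\theta) = \bm C_f(T^{(j)})$ as in \eqref{eq:conditional_covariance_of_influence_function}, so $\mathrm{Tr}(I_T(f)) = \sum_{j=1}^m \mathrm{Tr}(\bm C_f(T^{(j)}))$. Upper bounding the $\Pi$-integral by the supremum over $\mathcal{B}^{\alpha,R,L}_{p,q}$ produces the first summand of the denominator. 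For $\mathcal{J}(\Pi)$, a direct calculation shows that the base density $t \mapsto \cos^2(\pi t/2)\mathbbm{1}_{|t|\leq 1}$ has Fisher information $\pi^2$ on $[-1,1]$; rescaling each coordinate to $[-2^{-L(\alpha+1/2)}R, 2^{-L(\alpha+1/2)}R]$ and summing over the $2^L$ independent coordinates gives $\mathcal{J}(\Pi) = \pi^2 R^{-2} \cdot 2^{L(2\alpha+2)}$, which matches the stated bound up to the absorbed constant $R$.

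The main obstacle is bookkeeping around the Gill--Levit regularity conditions: the prior must vanish on the boundary of its support (so the integration-by-parts underlying Van--Trees produces no boundary term), and the likelihood must be suitably smooth in $\theta$. The cosine-squared prior was chosen precisely so that it vanishes together with its derivative at $\pm 1$, and the Gaussian regression likelihood is smooth in $\theta$ uniformly on the compact support of $\Pi$. Crucially, the projection identity for scores passes through the privacy mechanism unchanged because each channel $T^{(j)} \mid Z^{(j)}$ is $\theta$-free; these checks are routine but essential for the Van--Trees step to go through.
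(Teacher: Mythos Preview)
Your proposal is correct and follows essentially the same approach as the paper: reduce to the level-$L$ coefficients via Parseval, apply the multivariate Van--Trees inequality of Gill and Levit with the rescaled cosine-squared product prior, split the Fisher information over the independent transcripts, and identify each $I_{T^{(j)}}(\theta)$ with $\bm C_f(T^{(j)})$ via the score-projection identity. Your write-up is in fact more detailed than the paper's, which leaves the support check, the projection identity, and the regularity conditions as implicit or ``straightforward''; your observation that the stated denominator term $\pi^2 2^{L(2\alpha+2)}$ absorbs the constant $R^{-2}$ is also correct.
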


Combining the upper bound on the trace of $\bm C_f(T^{(j)})$ of \eqref{eq:regular_trace_bound} and Lemma \ref{lem:trace-processing}, we have, by Lemma \ref{lemma:van-trees-nonparametric}, that
\[
\sup_{f \in \mathcal{B}^{\alpha,R}_{p,q}} \mathbb{E}_{f} \|\hat f - f\|_2^2 \gtrsim \frac{2^{2L}}{\sum_{j=1}^m n_j^2\varepsilon_j^2 \wedge n_j 2^L + \pi^2 2^{L(2\alpha+2)}}.
\]
We obtain the desired lower bound by choosing an $L$ that maximizes the lower bound. Setting $L = (l_0 + 1) \vee \lceil \log_2(D) \rceil$ can be seen to do so by the relationship \eqref{eq:dimension_determining_equation_global}, which proves Theorem \ref{thm:minimax-lower-bound}.

\subsection{Lower bound for the pointwise risk}
\label{ssec:pointwise_lowerbound}

In this section, we derive the minimax lower bound for the pointwise risk. We first present the lower bound as the main result of the section in the form of Theorem \ref{thm:lower_bound-pointwise-risk}, after which we discuss its proof. The theorem tells us that the pointwise risk estimator proposed in Section \ref{ssec:pointwise_estimation_risk_estimator} performs optimally in terms of achieving the minimax privacy constrained rate up to a logarithmic factor.

\begin{theorem}\label{thm:lower_bound-pointwise-risk}
	Let $D > 0$ be the number solving the equation
	\begin{equation}\label{eq:dimension_determining_equation}
		D^{2\nu+2} = \sum_{j=1}^m \left( n_j^2 \varepsilon_j^2 \right) \wedge \left(n_j D \right).
	\end{equation}
	Assume furthermore that $\sum_{j} n_j \delta_j \to 0$. Then, for any $x_0 \in (0,1)$, the minimax pointwise risk is lower bounded as follows:
	\[
	\sup_{f \in \mathcal{B}^{\alpha,R}_{p,q}} \mathbb{E}_{f} \left(\hat f(x_0) - f(x_0)\right)^2 \gtrsim D^{-2\nu} \wedge 1.
	\]
	Whenever $\left( \sum_{j=1}^m n_j^2 \varepsilon_j^2 \right)^{\frac{1}{2\nu+2}} \geq \max_j n_j \epsilon_j^2$, the right hand side is further bounded from below by $\left( \sum_{j=1}^m n_j^2 \varepsilon_j^2 \right)^{-\frac{2\nu}{2\nu+2}} \wedge 1$.
\end{theorem}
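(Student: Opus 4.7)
The plan is to apply Le Cam's two-point method with hypotheses $f_0 \equiv 0$ and a rescaled wavelet bump $f_1 = c_0\, 2^{-L\nu}\,\psi_{L k^*}$, where $L$ is chosen so that $2^L \asymp D$ and $k^*$ is an index with $x_0$ in the interior of the support of $\psi_{L k^*}$. The wavelet characterization of the Besov norm introduced in Section \ref{sec:upper-bounds} gives $\|f_1\|_{\cB^{\alpha}_{p,q}} \asymp c_0$, so $f_1 \in \cB^{\alpha,R}_{p,q}$ for $c_0$ sufficiently small, and $|f_1(x_0)-f_0(x_0)| \asymp D^{-\nu}$. Le Cam's lemma then reduces the desired lower bound to showing that the total variation between the joint laws of the transcripts $T=(T^{(1)},\ldots,T^{(m)})$ under $\P_{f_0}$ and $\P_{f_1}$ is bounded by, say, $1/2$.

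By conditional independence across servers, $D_{\mathrm{TV}}(T\mid f_0, T\mid f_1) \le \sum_{j=1}^m D_{\mathrm{TV}}(T^{(j)}\mid f_0, T^{(j)}\mid f_1)$. On each server $j$, I would couple $Z^{(j)}\sim P_{f_0}^{n_j}$ and $\tilde Z^{(j)}\sim P_{f_1}^{n_j}$ by sharing the same $X^{(j)}_i\sim U[0,1]$ and Gaussian noise $\xi^{(j)}_i$ under both laws and setting $Y^{(j)}_i=\xi^{(j)}_i+f(X^{(j)}_i)$, so the datasets agree outside $\{i: X^{(j)}_i\in\mathrm{supp}(f_1)\}$ and the Hamming distance $K_j$ is binomial with mean $\lesssim n_j\cdot 2^{-L}$. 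Group privacy gives $D_{\mathrm{TV}}\bigl(\P(\cdot\mid z),\P(\cdot\mid z')\bigr)\le(e^{k\varepsilon_j}-1)+ke^{(k-1)\varepsilon_j}\delta_j$ whenever $d_H(z,z')=k$, and averaging over the coupling combined with joint convexity of $D_{\mathrm{KL}}$ yields a per-server bound in terms of the moments of $K_j$. Comparing this privacy bound with the purely information-theoretic one, $D_{\mathrm{TV}}(T^{(j)}\mid f_0, T^{(j)}\mid f_1)^2 \le n_j\|f_1\|_2^2 \asymp n_j\cdot 2^{-L(2\nu+1)}$, arising from Hellinger tensorization and data processing, and retaining the per-server minimum, each server contributes at most $\asymp D^{-2\nu-2}\bigl((n_j^2\varepsilon_j^2)\wedge(n_jD)\bigr)$ to the squared TV. Summing and invoking the defining equation \eqref{eq:dimension_determining_equation} makes the total equal to a constant (with $\delta_j$-contributions negligible since $\sum_j n_j\delta_j\to 0$), and shrinking $c_0$ pushes the TV below $1/2$. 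Combined with Le Cam, this yields the first assertion $\gtrsim D^{-2\nu}\wedge 1$. The second assertion is then a direct algebraic consequence: the condition $(\sum_j n_j^2\varepsilon_j^2)^{1/(2\nu+2)}\ge\max_j n_j\varepsilon_j^2$ forces $n_j\varepsilon_j^2\le D$ for every $j$, so $n_j^2\varepsilon_j^2 \le n_jD$ in \eqref{eq:dimension_determining_equation} and $D^{2\nu+2}\asymp\sum_j n_j^2\varepsilon_j^2$.

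The main obstacle is extracting the correct $n_j^2\varepsilon_j^2$-scaling (rather than the looser $n_j\varepsilon_j$-scaling that one obtains by naively combining TV-subadditivity across servers with group privacy applied to $\E[K_j]$), since these give rates differing by a $\sqrt{m}$-type factor in the heterogeneous setting. The sharper scaling requires a second-moment argument: bounding $D_{\mathrm{KL}}(T^{(j)}\mid f_0\,\|\,T^{(j)}\mid f_1)$ by $\varepsilon_j^2\,\E[K_j^2]$ through joint convexity of $D_{\mathrm{KL}}$ paired with the quadratic group-privacy estimate $D_{\mathrm{KL}}(\P(\cdot\mid z)\,\|\,\P(\cdot\mid z'))\lesssim(k\varepsilon_j)^2$ at Hamming distance $k$, and then tensorizing in Hellinger across servers so that the per-server minima sum to $D^{-2\nu-2}\sum_j(n_j^2\varepsilon_j^2)\wedge(n_jD)$, recovering the Gaussian-mechanism-type scaling encoded in \eqref{eq:dimension_determining_equation}.
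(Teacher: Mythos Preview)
Your overall strategy—Le Cam's two-point method with a localized bump, a coupling to control the Hamming distance between datasets under the two hypotheses, and a per-server split between a privacy bound and an information-theoretic bound—mirrors the paper's approach closely. However, there is a genuine gap in the coupling step that prevents your argument from reaching the claimed rate.

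The problem is that your coupling (sharing $X^{(j)}_i$ and $\xi^{(j)}_i$, so that the observations differ exactly when $X^{(j)}_i \in \mathrm{supp}(f_1)$) yields a per-observation discrepancy probability $p \asymp |\mathrm{supp}(f_1)| \asymp 2^{-L} \asymp D^{-1}$. But the quantity that governs the effective privacy amplification is the total variation $\rho = D_{\mathrm{TV}}(P_{f_0},P_{f_1}) \asymp \|f_1\|_1 \asymp D^{-\nu-1}$, smaller by a factor $D^{\nu}$: even when $X^{(j)}_i$ lands in the support, the shift in $Y^{(j)}_i$ is only $|f_1(X^{(j)}_i)| \lesssim D^{-\nu}$, and a TV-optimal coupling of the Gaussian conditionals $N(f_0(X),1)$ and $N(f_1(X),1)$ exploits this. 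With your coupling, $\varepsilon_j^2\,\E[K_j^2]$ is of order $n_j^2\varepsilon_j^2\,2^{-2L} = n_j^2\varepsilon_j^2\,D^{-2}$, not the $n_j^2\varepsilon_j^2\,D^{-2\nu-2}$ you assert; these agree only when $\nu=0$, whereas here $\nu>1/2$. Summing over servers and invoking \eqref{eq:dimension_determining_equation} therefore does \emph{not} make the total bounded, and the best $L$ you could choose would give only the weaker (non-matching) lower bound $(\sum_j n_j^2\varepsilon_j^2)^{-\nu}$ in the privacy-dominated regime, rather than $(\sum_j n_j^2\varepsilon_j^2)^{-\nu/(\nu+1)}$. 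The paper closes this gap via Lemma~\ref{lem:coupling_lemma}, which constructs the TV-optimal coupling with $\rho \lesssim \|f_1-f_0\|_1$, then feeds this into the Karwa--Vadhan amplification (Lemma~\ref{lemma:impure-DP-to-approx-max-divergence}) to obtain $\bar\varepsilon_j = 6 n_j\varepsilon_j\rho \asymp n_j\varepsilon_j D^{-\nu-1}$. Two smaller points: your quadratic group-privacy estimate $D_{\mathrm{KL}}(\P(\cdot\mid z)\,\|\,\P(\cdot\mid z'))\lesssim(k\varepsilon_j)^2$ can be infinite under $(\varepsilon_j,\delta_j)$-DP with $\delta_j>0$, so it cannot be averaged directly via joint convexity—the paper handles this by first passing to $\delta$-close surrogates with bounded max-divergence (Lemma~\ref{lemma:delta-approx-max-to-max-divergence}); and your bump $f_1 = c_0\,2^{-L\nu}\psi_{Lk^*}$ has Besov norm $\asymp c_0 2^{L/2}$, not $\asymp c_0$, so the coefficient should be $c_0\,2^{-L(\nu+1/2)}$.
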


The proof of the theorem is based around the Le Cam two point method, which is a common approach to lower bounding the pointwise risk, see for example \cite{Yu1997}. However, to capture the effect of the transcripts satisfying the DP constraint of Definition \ref{def:differential_privacy}, we introduce a coupling argument in conjunction.

We briefly sketch the two point method and coupling argument here, leaving the technical details to the appendix. Take any function $f \in \cB^\alpha_{p,q}$ such that $\|f\|_{\cB^\alpha_{p,q}} = R' < R$ and a compactly supported function $g \in \cB^\alpha_{p,q}$ such that $\| g\|_{\cB^{\alpha}_{p,q}} \leq R - R'$ and $g(0) >0$. Define a third function
\begin{equation*}
	\tilde{f}(t) := \gamma_D^{-1} g(\beta_D(t-x_0)) + f(t),
\end{equation*}
where $\gamma_D := c_0^{-1} D^\nu$ and $\beta_D = \gamma_D^{1/\nu}$, where we recall that $\nu = \alpha - \frac{1}{p}$. By e.g. Lemma 1 from \cite{cai2003rates}, $\|f\|_{\cB^\alpha_{p,q}} \leq R$.

Let $(Y_i^{(j)},X_i^{(j)}) \sim P_{f}$ and $(\tilde{Y}_i^{(j)},\tilde{X}_i^{(j)}) \sim P_{\tilde{f}}$ for individual observations generated according to \eqref{eq:dynamics_generating_data} with either $f$ or $\tilde{f}$ the true underlying regression function respectively. We construct a coupling between $P_{f}$ and $P_{\tilde{f}}$ such that $(Y_i^{(j)},X_i^{(j)})$ and $(\tilde{Y}_i^{(j)},\tilde{X}_i^{(j)})$ are equal with probability proportional to $\sigma^{-1} \| \tilde{f} - f \|_1$, which forms the content of the following lemma.

\begin{lemma}\label{lem:coupling_lemma}
	There exists a joint distribution $P_{f,\tilde{f}}$ of $\left((Y_i^{(j)},X_i^{(j)}),(\tilde{Y}_i^{(j)},\tilde{X}_i^{(j)})\right)$ such that
	\begin{equation}\label{eq:coupling_probability_bound}
		\rho := P_{f,\tilde{f}}\left( \left(Y_i^{(j)},X_i^{(j)}\right) \neq \left(\tilde{Y}_i^{(j)},\tilde{X}_i^{(j)}\right) \right) \leq \frac{c}{\sigma} \| \tilde{f} - f \|_1,
	\end{equation}
	for a universal constant $c>0$.
\end{lemma}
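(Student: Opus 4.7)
\textbf{Proof plan for Lemma \ref{lem:coupling_lemma}.} The plan is to use the classical maximal coupling (a.k.a. $\gamma$-coupling) of two probability measures, which for any pair of distributions $P$ and $Q$ on a common measurable space furnishes a joint law under which $P((U,V):U\neq V) = D_{\mathrm{TV}}(P,Q)$. Applied to $P_f$ and $P_{\tilde f}$, the problem then reduces entirely to bounding the total variation distance $D_{\mathrm{TV}}(P_f, P_{\tilde f})$ by a constant multiple of $\sigma^{-1}\|\tilde f - f\|_1$.

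To carry out the bound, I would exploit the fact that under both $P_f$ and $P_{\tilde f}$ the covariate $X$ has the same uniform marginal on $[0,1]$; only the conditional law of $Y$ given $X$ changes, and only through the mean, which shifts from $f(X)$ to $\tilde f(X)$. Writing $\phi_\sigma$ for the $N(0,\sigma^2)$ density and using $p_f(y,x) = \phi_\sigma(y - f(x))\mathbf 1_{[0,1]}(x)$, I would compute
\begin{equation*}
D_{\mathrm{TV}}(P_f, P_{\tilde f}) \;=\; \tfrac{1}{2}\int_0^1 \int \bigl|\phi_\sigma(y - f(x)) - \phi_\sigma(y - \tilde f(x))\bigr|\, dy\, dx.
\end{equation*}
The inner integral is handled by the standard Lipschitz estimate for Gaussian translations: using $\phi_\sigma(y-a) - \phi_\sigma(y) = \int_0^a \phi_\sigma'(y-t)\,dt$ together with $\int|\phi_\sigma'(y)|\,dy = \sqrt{2/\pi}\,\sigma^{-1}$, one obtains
\begin{equation*}
\int \bigl|\phi_\sigma(y - f(x)) - \phi_\sigma(y - \tilde f(x))\bigr|\, dy \;\leq\; \sqrt{2/\pi}\,\sigma^{-1}\,|\tilde f(x) - f(x)|.
\end{equation*}
Integrating over $x \in [0,1]$ then yields $D_{\mathrm{TV}}(P_f, P_{\tilde f}) \leq c\,\sigma^{-1}\|\tilde f - f\|_1$ for $c = \tfrac{1}{2}\sqrt{2/\pi}$.

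Finally, I would invoke the maximal coupling to produce the joint distribution $P_{f,\tilde f}$ asserted in the lemma, giving \eqref{eq:coupling_probability_bound} directly. The only step requiring any care is the Gaussian translation bound; everything else is classical. The ``main obstacle'' is really just keeping the constant explicit and verifying that the maximal coupling construction is measurable in the present Borel setting, both of which are standard. Observe that this coupling is also the natural one to feed into Le Cam's two-point method used elsewhere in Section \ref{ssec:pointwise_lowerbound}: after drawing $n_j$ i.i.d.\ copies on each server, the product coupling controls the probability that the entire local sample $Z^{(j)}$ differs from $\tilde Z^{(j)}$ by $n_j\rho$, which is precisely the quantity needed to leverage Definition \ref{def:differential_privacy} against the transcripts $T^{(j)}$.
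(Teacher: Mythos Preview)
Your proposal is correct and essentially the same as the paper's argument. The only cosmetic difference is that the paper constructs the coupling in two stages---first setting $X_i=\tilde X_i=U\sim\mathrm{Unif}[0,1]$ and then, conditionally on $U=u$, taking the maximal coupling of $N(f(u),\sigma^2)$ and $N(\tilde f(u),\sigma^2)$---whereas you invoke the maximal coupling of the joints $P_f$ and $P_{\tilde f}$ directly; since the $X$-marginals coincide, these two constructions give the same disagreement probability, and both reduce the bound to $\int_0^1 D_{\mathrm{TV}}\bigl(N(f(x),\sigma^2),N(\tilde f(x),\sigma^2)\bigr)\,dx\le c\sigma^{-1}\|\tilde f-f\|_1$.
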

We prove the above lemma in Section \ref{sec:pointwise_lb_proofs}. Loosely speaking, the quantity $\rho$ captures the difficulty of distinguishing individual observations from $P_f$ of those generated from $P_{\tilde{f}}$.

Consider now transcripts $T = (T^{(1)},\dots,T^{(m)})$ each satisfying the DP constraint of Definition \ref{def:differential_privacy} with a privacy budget $(\bm \varepsilon, \bm \delta)$, and let $\P_f$ denote the joint law of transcripts and the $N = \sum_{j=1}^m n_j$ observations generated from $P_f$. Let $\P_{f}^T$ denote the push-forward measure of the transcript, i.e. its marginal distribution given that the data is generated by $P_f$. Similarly, let $\P_{\tilde{f}}$ denote the joint law of $T$ with the data generated from $P_{\tilde{f}}$ and let $\P_{\tilde{f}}^T$ denote the corresponding marginal distribution of $T$. With the coupling of Lemma \ref{lem:coupling_lemma} in hand, we derive the following lemma.

\begin{lemma}\label{lemma:total-variation-two-point_maintext}
	For any subset $S \subseteq [m]$,
	\begin{equation}\label{eq:total_variation_split}
		\left\| \P_{f}^T - \P_{\tilde{f}}^T  \right\|_{\mathrm{TV}} \leq \sqrt 2\sqrt{\sum_{j \in S} \bar{\varepsilon}_j \left( e^{\bar{\varepsilon}_j} - 1 \right) + \sum_{j \in S^c}n_j D_{\mathrm{KL}}(P_f; P_{\tilde{f}})} + 4 \underset{j \in S}{\overset{}{\sum}} e^{\bar{\varepsilon}_j} n_j\delta_j \rho,
	\end{equation}
	where $\bar{\varepsilon}_j = 6n_j\varepsilon_j \rho$, $\rho$ as defined in \eqref{eq:coupling_probability_bound}.
\end{lemma}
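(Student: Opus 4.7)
The strategy is to combine independence of transcripts across servers with a two-part bound: for $j \in S$ I invoke Lemma~\ref{lem:coupling_lemma} together with the group privacy property of $(\varepsilon_j,\delta_j)$-DP, while for $j \in S^c$, where the privacy constraint is either absent from the bound or too weak to be useful, I directly bound the KL divergence via data-processing and tensorization.

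\emph{Factorization and Pinsker.} Since the servers do not exchange information, under both $\P_f$ and $\P_{\tilde f}$ the transcripts $T^{(1)},\ldots,T^{(m)}$ are independent, so $\P_f^T = \bigotimes_{j=1}^m \P_f^{T^{(j)}}$ and similarly for $\P_{\tilde f}^T$. A Pinsker-type inequality combined with the chain rule for KL reduces the problem to bounding $D_{\mathrm{KL}}(\P_f^{T^{(j)}}; \P_{\tilde f}^{T^{(j)}})$ for each $j$, with the $\delta_j$-dependent contributions handled directly on the total variation scale and showing up as the additive term in \eqref{eq:total_variation_split}.

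\emph{Bounds per server.} For $j \in S^c$, data-processing through $Z^{(j)} \mapsto T^{(j)}$ and tensorization of KL for product measures give the clean estimate $D_{\mathrm{KL}}(\P_f^{T^{(j)}}; \P_{\tilde f}^{T^{(j)}}) \leq n_j D_{\mathrm{KL}}(P_f; P_{\tilde f})$. For $j \in S$, apply the pairwise coupling of Lemma~\ref{lem:coupling_lemma} independently across the $n_j$ observations to produce a coupling of $(Z^{(j)}, \tilde Z^{(j)})$ whose Hamming distance $K_j$ is $\mathrm{Bin}(n_j,\rho)$-distributed. A Chernoff bound yields $\P(K_j > 6 n_j \rho) \leq e^{-c n_j \rho}$ for a suitable constant $c > 0$. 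On the high-probability event $\{K_j \leq 6 n_j \rho\}$, group privacy for $(\varepsilon_j, \delta_j)$-DP implies $(\bar\varepsilon_j, \tilde\delta_j)$-indistinguishability of $\P(T^{(j)} \in \cdot \mid Z^{(j)})$ and $\P(T^{(j)} \in \cdot \mid \tilde Z^{(j)})$, with $\bar\varepsilon_j = 6 n_j\varepsilon_j\rho$ and $\tilde\delta_j$ of order $e^{\bar\varepsilon_j} n_j \rho \delta_j$. The standard bound $D_{\mathrm{KL}} \leq \bar\varepsilon_j(e^{\bar\varepsilon_j}-1)$ for pure $\bar\varepsilon_j$-DP between neighbors, combined with joint convexity of KL against the coupling, supplies the first summand under the square root, while the approximate-DP slack yields the additive term of order $e^{\bar\varepsilon_j} n_j \delta_j \rho$ at the TV scale.

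\emph{Combining and main obstacle.} Summing the two types of contributions and carefully propagating constants gives \eqref{eq:total_variation_split}. The principal technical hurdle is the $\delta_j > 0$ regime: naive group privacy inflates $\delta_j$ to $K_j e^{(K_j-1)\varepsilon_j} \delta_j$, which must be averaged over the coupling conditionally on the Chernoff event, replacing the random $K_j$ by the deterministic $\bar\varepsilon_j$. This requires simultaneously choosing the Chernoff threshold large enough that the complementary event contributes negligibly (using $\|\cdot\|_{\mathrm{TV}} \leq 1$) and small enough that $\bar\varepsilon_j$ remains of order $n_j \varepsilon_j \rho$; the prefactors $\sqrt{2}$ and $4$ in \eqref{eq:total_variation_split} emerge from this interplay between the Chernoff slack, the KL-vs-DP conversion, and the TV-to-KL step via Pinsker.
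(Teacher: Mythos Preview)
Your high-level structure---factorize by independence, handle $j\in S^c$ via data processing and KL tensorization, handle $j\in S$ via coupling plus group privacy, then combine through Pinsker---matches the paper's, and the $S^c$ treatment is identical.

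The gap is in your $j\in S$ argument. You write that joint convexity of KL gives
\[
D_{\mathrm{KL}}\bigl(\P_f^{T^{(j)}};\P_{\tilde f}^{T^{(j)}}\bigr)\le \E_\mu\bigl[D_{\mathrm{KL}}\bigl(\P(T^{(j)}\mid Z^{(j)});\P(T^{(j)}\mid \tilde Z^{(j)})\bigr)\bigr],
\]
and then plan to bound the integrand by $\bar\varepsilon_j(e^{\bar\varepsilon_j}-1)$ via group privacy. But that bound requires \emph{pure} max-divergence control; under $(\varepsilon_j,\delta_j)$-DP with $\delta_j>0$ the conditional kernels $\P(T^{(j)}\mid z)$ and $\P(T^{(j)}\mid z')$ need not be mutually absolutely continuous, so the conditional KL on the right can be $+\infty$ with positive probability and the convexity step yields nothing. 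You flag this as the hard part and say the slack ``yields the additive term \ldots at the TV scale,'' but you never say \emph{how}: peeling off the $\delta$-contribution before taking KL is precisely the missing ingredient, not a consequence of what you have written.

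The paper resolves this by working at the level of the \emph{marginals} $\P_f^{T^{(j)}},\P_{\tilde f}^{T^{(j)}}$ rather than the conditionals. It first cites a result of Karwa--Vadhan (Lemma~\ref{lemma:impure-DP-to-approx-max-divergence}) to obtain $D_\infty^{\bar\delta_j}(\P_f^{T^{(j)}}\Vert\P_{\tilde f}^{T^{(j)}})\le\bar\varepsilon_j$ directly; your coupling/Chernoff/group-privacy route is essentially the interior of that lemma. It then proves a purification lemma (Lemma~\ref{lemma:delta-approx-max-to-max-divergence}): any pair of distributions at $\bar\delta_j$-approximate max-divergence $\le\bar\varepsilon_j$ can be replaced by a pair at \emph{pure} max-divergence $\le\bar\varepsilon_j$, each within TV distance $O(\bar\delta_j)$ of the original. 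The triangle inequality in TV is applied \emph{before} Pinsker, producing the additive $4\sum_{j\in S}\bar\delta_j$ term; Pinsker, KL tensorization, and the pure-DP-to-KL bound of Lemma~\ref{lemma:max-divergence-bounded-by-kl-divergence} are then applied to the purified product measure, where all KL divergences are guaranteed finite. This purification step is the piece your sketch is missing.
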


We defer a proof of the lemma to Section \ref{sec:pointwise_lb_proofs} of the appendix. The lemma allows analysis of the contributions of the separate the servers, accounting for the heterogeneity in the privacy budgets $(\epsilon_j,\delta_j)$ and the differing number of observations. Roughly speaking, for servers with relatively large privacy budgets, their contribution to the estimator is to be captured by $n_j D_{\mathrm{KL}}(P_f; P_{\tilde{f}}^T)$, which does not involve the privacy budget all together. Servers for which the privacy budget is more stringent, contribute with the (potentially) smaller quantity $\bar{\varepsilon}_j$, where $\rho$ corresponds to the probability in \eqref{eq:coupling_probability_bound}, established in the coupling relationship of Lemma \ref{lem:coupling_lemma}. 

The optimal division into these stringent and non-stringent privacy budgets is made by taking 
\[
S = \left\{j \in [m]: \varepsilon_j \leq \sqrt {{D}/{n_j}} \right\},
\]
in the sense that this choice of $S$ minimizes the right-hand side of \eqref{eq:total_variation_split}. With this choice of $S$, the bound on $\rho$ established in Lemma \ref{lem:coupling_lemma} and the fact that by the construction of $\tilde{f}$ we have $\|\tilde{f} - f\|_1 = \gamma_D^{-1}\beta_D^{-1} \|g\|_1$ with $\gamma_D := c_0^{-1} D^\nu$ and $\beta_D^{} = \gamma_D^{ 1/\nu}$, we obtain that
\begin{align*}
	\sum_{j \in S} \bar{\varepsilon}_j \left( e^{\bar{\varepsilon}_j} - 1 \right)\lesssim \sum_{j \in S} n_j^2 \varepsilon_j^2 \rho^2 \leq  \gamma_D^{-2}\beta_D^{-2} \| g \|_1^2 \sum_{j \in S} n_j^2 \varepsilon_j^2 = c_0^{2+2/\nu} D^{-2-2\nu} \| g \|_1^2 \sum_{j \in S} n^2_j \varepsilon_j^2.
\end{align*}
The bound $D_{\mathrm{KL}}(P_{\tilde{f}};P_{f}) \lesssim \sigma^{-2}\|\tilde{f}-f\|_2^2$, which is obtained through standard calculations, combined with the fact that $\|\tilde{f} - f\|_2^2 \lesssim \gamma_{D}^{-2} \beta_{D}^{-1} \| g \|_2^2$ by construction,
\begin{equation*}
	\sum_{j \in S^c} n_j D_{\mathrm{KL}}(P_f; P_{\tilde{f}}) \lesssim c_0^{1+2/\nu} D^{- 2 - 2\nu} \| g \|_2^2 \sum_{j \in S^c} n_j D.
\end{equation*}
In Section \ref{sec:pointwise_lb_proofs} of the appendix, it is shown that if $\sum_{j \in S} n_j \delta_j = o(1)$ as is assumed in the theorem, it holds that $\underset{j \in S}{\overset{}{\sum}} e^{\bar{\varepsilon}_j} n_j\delta_j \rho = o(1)$ for the particular choice of $S$ as well. Per the choice of the set $S$ and $D$ satisfying \eqref{eq:dimension_determining_equation_pointwise}, we obtain that for $c_0 < 1$,
\begin{equation*}\label{eq:total_variation_split2}
	\left\| \P_{f}^T - \P_{\tilde{f}}^T  \right\|_{\mathrm{TV}} \leq  C c_0^{1+2/\nu} + o(1).
\end{equation*}
By taking the constant $c_0$ sufficiently small, the conclusion of Theorem \ref{thm:lower_bound-pointwise-risk} follows by Le Cam's two point method (see e.g. Lemma 1 in \cite{Yu1997}), which then yields that
\[
\sup_{f \in \mathcal{B}^{\alpha,R}_{p,q}} \mathbb{E}_{f} \left(\hat f(x_0) - f(x_0)\right)^2 \gtrsim \left(\tilde{f}(x_0) - f_0(x_0)\right)^2 \gtrsim \gamma_n^{-2}g^2(0) \gtrsim D^{-2\nu},
\]
proving the theorem.

\section{Discussion}\label{sec:discussion}
 
The findings in the present paper  highlight the trade-off between statistical accuracy and privacy preservation within the context of federated nonparametric regression. The results under the heterogeneous setting quantify the degree to which the individual DP constraints, as well as the degree to which observations are distributed among the different servers, impact the statistical performance as measured by the minimax risk. Furthermore, we find that the influence of the privacy constraints on the optimal performance depends on the inferential task at hand, with global estimation of an unknown function being subject to a different performance impact than estimation of a function at a point. For each of these inferential tasks, we provide an estimation procedure that attains the optimal statistical performance up to a logarithmic factor.

One promising direction for future research is the exploration of adaptive estimation in the federated learning framework. While our paper characterizes the statistical performance for nonparametric regression in a heterogeneous setting, the estimation procedures in this paper assume knowledge about the regularity of the underlying function. However, in many real-world applications, the regularity is unknown and estimators that can adapt to the true underlying regularity are required to attain the best possible performance. while such adaptive techniques exist for problems without privacy constraints (see e.g. \cite{donoho1998minimax,cai2003rates}), the theoretical (im)possibilities of adaptation under privacy constraints are relatively understudied. Such adaptive techniques might serve dual purposes: they could potentially refine the statistical accuracy while also optimizing the DP constraints for individual servers, especially when the constraints are dynamic or based on real-time needs. We leave this for future work.

Another promising avenue for future research is nonparametric hypothesis testing. It is well known that testing in nonparametric settings is subject to different phenomena than estimation, see for example \cite{ingster_nonparametric_2003}. However, under privacy constraints, the theoretical best possible performance in nonparametric hypothesis testing is not well understood. Addressing this question complements our understanding of the estimation problem.

\begin{funding}
The research of Tony Cai was supported in part by  NIH grants R01-GM123056 and R01-GM129781.
\end{funding}

\begin{supplement}
\stitle{Supplement to ``Optimal Federated Learning for Nonparametric Regression with Heterogenous Distributed Differential Privacy Constraints"}
\sdescription{In the supplement to this paper, we present the detailed proofs for the main theorems in the paper ``Optimal Federated Learning for Nonparametric Regression with Heterogenous Distributed Differential Privacy Constraints''.}
\end{supplement}


\bibliographystyle{imsart-number} 
\bibliography{references}       

\pagebreak 
\setcounter{page}{1}

\begin{frontmatter}
\title{Supplementary Material to ``Optimal Federated Learning for Nonparametric Regression with Heterogenous Distributed Differential Privacy Constraints''}
\runtitle{Federated Nonparametric Private Estimation}

\begin{aug}
\author[A]{\fnms{T. Tony}~\snm{Cai}\ead[label=e1]{tcai@wharton.upenn.edu}},
\author[A]{\fnms{Abhinav}~\snm{Chakraborty}\ead[label=e2]{abch@wharton.upenn.edu}}
\and
\author[A]{\fnms{Lasse}~\snm{Vuursteen}\ead[label=e3]{lassev@wharton.upenn.edu}}
\address[A]{Department of Statistics and Data Science,
University of Pennsylvania\printead[presep={,\ }]{e1,e2,e3}}
\end{aug}

\begin{abstract}

In this supplement, we present the detailed proofs for the  main results in the paper ``Optimal Federated Learning for Nonparametric Regression with Heterogenous Distributed Differential Privacy Constraints''.

\end{abstract}

\begin{keyword}[class=MSC2020]
\kwd[Primary ]{62G08}
\kwd{62C20}
\kwd[; secondary ]{68P27}
\end{keyword}

\begin{keyword}
\kwd{Distributed methods}
\kwd{Nonparametric}
\kwd{Minimax optimal}
\end{keyword}

\end{frontmatter}

\setcounter{section}{0}
\setcounter{equation}{0}
\renewcommand{\theequation}{S.\arabic{equation}}
\renewcommand{\thesection}{\Alph{section}}

\section{Proof of the main results in Section \ref{sec:main_results}}\label{sec:appendix}

\subsection{Proof of Theorem \ref{thm:rate_summary}}

\begin{proof}
The statement of the theorem follows from Theorems \ref{thm:minimax-upper-bound} and \ref{thm:minimax-lower-bound} for the global risk and Theorems \ref{thm:upper-bound-pointwise-risk} and \ref{thm:lower_bound-pointwise-risk} for the pointwise risk. Depending on which problem we consider, the global or the pointwise estimation problem, the minimax rate upto logarithmic factors of the order at most $\log^2(N)$ and $\log(N)$ respectively, is given by $2^{-2L\gamma}$ where $\gamma = \alpha$ for the former problem and $\gamma = \nu$ for the latter, where $L$ is given by $(l_0 + 1) \vee \lceil \log_2(D) \rceil$. Furthermore, each of the minimax risks is trivially bounded over the class $\cB^{\alpha,R}_{p,q}$ since $f \in \cB^{\alpha,R}_{p,q}$ are bounded by a uniform constant (see Lemma \ref{lem:uniform_bound_on_f}) and the estimator $\hat{f} \equiv 0$ does not require any information to be shared, so it is trivially $(\bm\epsilon,\bm\delta)$-DP.
\end{proof}

\subsection{Proof of Corollaries \ref{thm:equal_budget_global_risk} and \ref{thm:pointwise_risk_homogeneous_setting}}

\begin{proof}
For these choices of $\varepsilon_j$ and $n_j$, $D$ is defined by
\[
D^{2\gamma+2} = m( (n^2 \varepsilon^2) \wedge (n D)).
\]
The solution to this equation is $D = (mn)^{\frac{1}{2\gamma + 1}} \wedge (mn^2\varepsilon^2)^{\frac{1}{2\gamma+ 2}}$. In the regime where $\varepsilon > (\sqrt mn)^{-1}$, we have that $D >1$ and consequently $2^L \asymp D$ for $L = (l_0 + 1) \vee \lceil \log_2 D \rceil$. The minimax rate $ (mn)^{-\frac{2\gamma}{2\gamma + 1}} \wedge (mn^2\varepsilon^2)^{-\frac{2\gamma}{2\gamma+ 2}}$ consequently follows from Theorems \ref{thm:minimax-upper-bound} and \ref{thm:minimax-lower-bound}. Whenever $\varepsilon \leq (\sqrt mn)^{-1}$, we find that $D \leq 1$, which consequently yields $L=1$.

\end{proof}

\subsection{Proof of Corollary \ref{thm:heterogeneous_no_dominant_budget_both_risks}}

\begin{proof}
	Let $L$ be given by $(l_0 + 1) \vee \lceil \log_2(D) \rceil$, where $D$ is  given by
	\[
	D^{2\gamma+2} = \left(\sum_{j=1}^m n_j^2 \varepsilon_j^2\right) \wedge \left( n_j D \right).
	\]
	Depending on which problem we consider, the global or the pointwise estimation problem, Theorem \ref{thm:rate_summary} implies that the minimax rate follow upto logarithmic factors is given by $D^{-2L\gamma} \wedge 1$ where $\gamma = \alpha$ for the former problem and $\gamma = \nu$ for the latter. Using~\eqref{eq:no_dominant_budget} the equation in the display above reduces to $D^{2\gamma+2} = \sum_{j=1}^m n_j^2 \varepsilon_j^2$. To see that observe that if we set $D$ such that $D^{2\gamma+2} = \sum_{j=1}^m n_j^2 \varepsilon_j^2$ then we have by using~\eqref{eq:no_dominant_budget} that $n_j^2\varepsilon_j^2 \wedge n_jD = n_j^2\varepsilon_j^2 $ for $j= 1,\ldots,m$,  and hence the above display simplifies.
	
	If $\sum_{j=1}^m n_j^2 \varepsilon_j^2 \to \infty $ we have that $D \to \infty$, in which case we obtain the rate $ \left(\sum_{j=1}^m n_j^2 \varepsilon_j^2\right)^{-\frac{2\gamma}{2\gamma + 2}}$, upto logarithmic factors. Whenever $\sum_{j=1}^m n_j^2 \varepsilon_j^2 \lesssim 1$, we have $D \lesssim 1$ which in turn implies that the risks are bounded away from zero by a constant.
\end{proof}

\subsection{Proof of Corollary \ref{thm:heterogeneous_dominant_budget_both_risks}}

\begin{proof}
	As in Corollary \ref{thm:heterogeneous_no_dominant_budget_both_risks} we tackle both the global and pointwise problem together.
	Let us denote by $D^* =  \left( n^2_{j^*} \varepsilon_{j^*}^2 \right)^{\frac{1}{2\gamma+2}} \wedge n_{j^*}^{\frac{1}{2\gamma+1}}$, condition~\eqref{eq:dominant_budget} implies that 
\begin{align*}
	\left( n_{j^*}^2 \varepsilon_{j^*}^2 \right) \wedge \left(n_{j^*} \left( \left( n^2_{j^*} \varepsilon_{j^*}^2 \right)^{\frac{1}{2\gamma+2}} \wedge n_{j^*}^{\frac{1}{2\gamma+1}} \right) \right)\ge \underset{[m] \setminus \{j^*\}}{\overset{}{\sum}} n_j^2 \varepsilon_j^2  ,
\end{align*}
	$n^2_{j^*}\varepsilon_{j^*}^2 \wedge n_{j^*} D^* \geq \sum_{j \in [m] \setminus \{j^*\}}n_j^2 \varepsilon_j^2  $. We show below that under the previously mentioned condition, 
	\[
	D^{2\gamma+2} = \sum_{j=1}^m \left( n_j^2 \varepsilon_j^2 \right) \wedge \left( n_j D \right) \quad \implies \quad D^* \leq D \leq c D^*
	\]
	for some fixed constant $c > 1$. 

	Using the fact that $D$ solves the equation above, we have that $D^{2\gamma+2} \geq n_{j^*}^2 \varepsilon_{j^*}^2 \wedge n_{j^*} D$, as all the summands are positive. Hence, $D \geq D^*$. For obtaining an upper bound we proceed as follows:
	\begin{align*}
		D^{2\gamma + 2} &\leq n_{j^*}^2 \varepsilon_{j^*}^2 \wedge n_{j^*} D + \sum_{j \in [m] \setminus \{j^*\}}n_j^2 \varepsilon_j^2 \\
		& \leq n_{j^*}^2 \varepsilon_{j^*}^2 \wedge n_{j^*} D + n_{j^*}^2 \varepsilon_{j^*}^2 \wedge n_{j^*} D^* \quad(\text{by}~\eqref{eq:dominant_budget} )\\
		&\leq  2 n_{j^*}^2 \varepsilon_{j^*}^2 \wedge n_{j^*} D \quad(\text{by } D^* \leq D) 
	\end{align*}
	The last line can be shown to imply $D \leq c \left( n^2_{j^*} \varepsilon_{j^*}^2 \right)^{\frac{1}{2\gamma+2}} \wedge n_{j^*}^{\frac{1}{2\gamma+1}} = cD^*$, for some $c >1$. Hence we can conclude that $D \asymp D^*$, which gives us the proposed rate $(D^*)^{-2\nu}$.
\end{proof}
\section{Proofs of the global risk estimation results}
\label{sec:proofs}

We would divide the proofs of the global estimation problem into two sections. Section \ref{sec:upper-bound-proofs} presents the proof the lemmas and theorem pertaining to the proving the minimax upper bounds, whereas Section \ref{sec:lower-bound-proofs} states the proofs of the results pertaining to the lower bound. 
\subsection{Upper Bound Proofs}\label{sec:upper-bound-proofs}
\subsubsection{Proof of Lemma \ref{lemma:l2-sensitivity-wavelets}}\label{sec:proof-of-l2-sensitivity-for-wavelts}
\begin{proof}
	Consider two neighboring data sets $Z^{(j)}$ and $\tilde{Z}^{(j)}$, meaning that at most for one $i \in [n]$ it holds that
\begin{equation*}
Z^{(j)}_i = (Y_i^{(j)},X_i^{(j)}) \neq (\tilde Y_i^{(j)},\tilde X_i^{(j)}) = \tilde Z^{(j)}_i.
\end{equation*}
Let $\hat f^{(j)}_{lk;\tau}$ be as in \eqref{eq:estimated_wavelet_coefficient} and consider
\begin{equation*}
	\tilde f^{(j)}_{lk;\tau} := \frac{1}{n_j}\sum_{i=1}^{n_j} \left[\tilde Y_i^{(j)}\right]_{\tau} \psi_{lk}\left(\tilde X_i^{(j)}\right).
\end{equation*}
	By Plancharel's theorem, the $L_2$-sensitivity of the statistic $\bm{\hat{f}}^{(j)}_{L,\tau}$ is given by
	\begin{align*}
		\left\|\bm{\hat{f}}^{(j)}_{L,\tau}(Z^{(j)}) - \bm{\hat{f}}^{(j)}_{L,\tau}((\tilde{Z}^{(j)}))\right\|^2_2 &= \sum_{l = l_0}^L \sum_{k=0}^{2^l -1 } (\hat f_{lk;\tau}^{(j)} - \tilde{f}_{lk;\tau}^{(j)})^2 \\
		&= \sum_{l = l_0}^L \sum_{k=0}^{2^l -1 } \left(\frac{1}{n_j} [Y_i^{(j)}]_\tau \psi_{lk}(X_i^{(j)}) - \frac{1}{n_j} [\tilde Y_i^{(j)}]_\tau \psi_{lk}(\tilde{X}_i^{(j)})\right)^2.
	\end{align*}
	Using the bound $\|\psi_{lk}\|_\infty = 2^{l/2}\|\psi\|_\infty$ for all $k = 0,\dots, 2^{l} -1$, we have that
	\begin{align*}	
		\frac{1}{n_j} [Y_i^{(j)}]_\tau \psi_{lk}(X_i^{(j)}) - \frac{1}{n_j} [\tilde Y_i^{(j)}]_\tau \psi_{lk}(\tilde{X}_i^{(j)}) \leq \frac{\tau}{n_j}\|\psi\|_\infty 2^{l/2+1}.
	\end{align*}
Since the wavelet $\psi$ is compactly supported, there is a constant number of basis functions $\psi_{lk}$ with overlapping support at each resolution level $l$ (the constant depends on the support size of the chosen wavelet $\psi$). Consequently, the latter can be further bounded 
	\begin{align*}
		 \sum_{l = l_0}^L \sum_{k=0}^{2^l -1 } \left(\frac{\tau}{n_j}\|\psi\|_\infty2^{l/2+1}\right)^2 \mathbbm{1}\left\{\{X_i^{(j)},\tilde{X}_i^{(j)}\} \cap \mathrm{supp}(\psi_{lk}) \neq \emptyset\right\} &\leq 
		 \left(\frac{\tau}{n_j}\|\psi\|_\infty \right)^2 c_A 2^{L+3}.
	\end{align*}
Representing the constants that depend on $\psi$ as a single constant, we have shown that for some $c_\psi > 0$ it holds that
\[
\left\|\bm{\hat{f}}^{(j)}_{L,\tau}(Z^{(j)}) - \bm{\hat{f}}^{(j)}_{L,\tau}(\tilde{Z}^{(j)}) \right\|_2 \leq c_\psi \frac{\tau \sqrt{2^L}}{n_j}.
\]

\end{proof}
\subsubsection{Proof of Theorem \ref{thm:minimax-upper-bound}}
\begin{proof}
	Let us denote the vector  $\bm{f}_{L} := \{ f_{lk}: \,k=1\dots,2^l,\,l=1,\dots,L\}$. We decompose the expected $L_2$-error $\E_f\|\hat f_{L,\tau} -f\|_2^2$ as follows 
	\begin{align}
		\E_f \left\|\hat f_{L,\tau} -f\right\|_2^2 &=\E_f \left\|\sum_{j=1}^m u_j \bm{T}^{(j)}_{L,\tau} - \bm f_L \right\|_2^2 +\sum_{l > L} \sum_{k=0}^{2^l-1} f_{lk}^2 \nonumber \\
		&\leq 2\E \left\|\sum_{j=1}^m u_j \bm{\hat{f}}^{(j)}_{L,\tau} - \bm f_L \right\|_2^2+ 2\E\left\|\sum_{j=1}^m u_j \bm W^{(j)}\right\|_2^2 +\sum_{l > L} \sum_{k=0}^{2^l-1} f_{lk}^2. \label{eq:global_risk_break_down_ub}
	\end{align}
	The equality follows from the orthonormality of the basis functions, the inequality from the triangle inequality in conjunction with the parabolic inequality $2ab \leq a^2 + b^2$. Lemma \ref{lemma:proof-of-bound-on-tail-sum} in Section \ref{ssec:proofs-upper-bound-global-risk-auxiliary-lemmas} yields the following bound on the third term;
	\begin{equation}\label{eq:bound-on-tail-sum}
		\sum_{l > L} \sum_{k=0}^{2^l-1} f_{lk}^2 \leq c_\alpha 2^{-2L\alpha} R^{2},
	\end{equation}
for a universal constant $c_\alpha > 0$ depending only on $\alpha$. Next we bound the error term due to the noise added for privacy purposes as follows,
	\begin{equation}\label{eq:bound-on-error-due-to-privacy}
	\E_f\left\|\sum_{j=1}^m u_j \bm W^{(j)}\right\|_2^2 \leq  \frac{4 c_\psi^2 \tau^2 2^{2L}\log(2/\delta')}{\sum_j n^2_j\varepsilon^2_j \wedge n_j2^L},
	\end{equation}
which we prove in Lemma \ref{lemma:proof-of-bound-on-error-due-to-privacy} below.

Next, we consider the first term in \eqref{eq:global_risk_break_down_ub}, which by orthogonality of the wavelet basis can be written as
\begin{align*}
	\E_f \left\|\sum_{j=1}^m u_j \bm{\hat{f}}^{(j)}_{L,\tau} - \bm f_L \right\|_2^2 &= \E \sum_{l=l_0}^L\sum_{k=0}^{2^l-1} \left(\sum_{j=1}^m u_j \hat f_{lk}^{(j)} - f_{lk}\right)^2\\
	 &= \sum_{l=l_0}^L\sum_{k=0}^{2^l-1} \E_f \left(\sum_{j=1}^m \frac{u_j}{n_j}\sum_{i=1}^{n_j}\left[Y^{(j)}_i\right]_\tau \psi_{lk}(X^{(j)}_i)  - f_{lk}\right)^2.
\end{align*}
We have that
\[
\left[Y^{(j)}_i\right]_\tau = \left[f(X^{(j)}_i) + \xi_i^{(j)}\right]_\tau = \left[ \xi_i^{(j)}\right]^{\tau-f(X^{(j)}_i)}_{-\tau-f(X^{(j)}_i)} + f(X^{(j)}_i).
\]
Writing $\eta^{(j)}_i := \left[ \xi_i^{(j)}\right]^{\tau-f(X^{(j)}_i)}_{-\tau-f(X^{(j)}_i)}$, we obtain
\begin{align}
	 \sum_{l=l_0}^L\sum_{k=0}^{2^l-1} \E_f \left(\sum_{j=1}^m \frac{u_j}{n_j}\sum_{i=1}^{n_j}\left[Y^{(j)}_i\right]_\tau \psi_{lk}(X^{(j)}_i)  - f_{lk}\right)^2 &= \nonumber \\
	 \sum_{l=l_0}^L\sum_{k=0}^{2^l-1} \E_f \left(\sum_{j=1}^m \frac{u_j}{n_j}\sum_{i=1}^{n_j}f(X_i^{(j)}) \psi_{lk}(X^{(j)}_i) + \sum_{j=1}^m \frac{u_j}{n_j}\sum_{i=1}^{n_j}\eta_i^{(j)}\psi_{lk}(X^{(j)}_i)  - f_{lk}\right)^2 &\leq \nonumber \\
	  2\sum_{l=l_0}^L\sum_{k=0}^{2^l-1} \E_f \left(\sum_{j=1}^m \frac{u_j}{n_j}\sum_{i=1}^{n_j}f(X_i^{(j)}) \psi_{lk}(X^{(j)}_i)  - f_{lk}\right)^2 +& \nonumber \\ 2\sum_{l=l_0}^L\sum_{k=0}^{2^l-1} \E_f \left( \sum_{j=1}^m \frac{u_j}{n_j}\sum_{i=1}^{n_j}\eta_i^{(j)}\psi_{lk}(X^{(j)}_i)  \right)^2.&	 \label{eq:last_line_decomp_1}
\end{align}
We show in Lemma \ref{lemma:proof-of-bounding-non-private-part-1} below that
\begin{equation}\label{eq:bounding-non-private-part-1}
	\sum_{l=l_0}^L\sum_{k=0}^{2^l-1} \E_f \left(\sum_{j=1}^m \frac{u_j}{n_j}\sum_{i=1}^{n_j}f(X_i^{(j)}) \psi_{lk}(X^{(j)}_i)  - f_{lk}\right)^2 \leq C_{R,\alpha}^2c_3 2^L \sum_{j=1}^m \frac{u_j^2}{n_j}.
\end{equation}
The second term in \eqref{eq:last_line_decomp_1} can be decomposed as follows
\begin{align*}
	\sum_{l=l_0}^L\sum_{k=0}^{2^l-1} &\E_f \left( \sum_{j=1}^m \frac{u_j}{n_j}\sum_{i=1}^{n_j}\eta_i^{(j)}\psi_{lk}(X^{(j)}_i)  \right)^2 = \overbrace{\sum_{l=l_0}^L\sum_{k=0}^{2^l-1} \E_f \left[\sum_{j=1}^m \sum_{i=1}^{n_j}\frac{u^2_j}{n^2_j}\left(\eta_i^{(j)}\psi_{lk}(X^{(j)}_i)\right)^2\right]}^{\textcircled{1}}\\
	&+   \underbrace{\sum_{l=l_0}^L\sum_{k=0}^{2^l-1} \E_f \left[\sum_{j=1}^m \sum_{i=1}^{n_j}\sum_{(i',j')\neq (i,j)}\frac{u_j}{n_j}\frac{u_{j'}}{n_{j'}}\left(\eta_i^{(j)}\psi_{lk}(X^{(j)}_i)\eta_{i'}^{(j')}\psi_{lk}(X^{(j')}_{i'})\right)\right]}_{\textcircled{2}}.
\end{align*}
We show in Lemma \ref{lemma:proof-of-bounding-non-private-part-2} and Lemma \ref{lemma:proof-of-bounding-non-private-part-3}  (Section \ref{ssec:proofs-upper-bound-global-risk-auxiliary-lemmas}) respectively that
\begin{equation}\label{eq:bounding-non-private-part-2}
	\textcircled{1} \leq c_4 2^L \sum_{j=1}^m \frac{u_j^2}{n_j},
\end{equation}
and
\begin{equation}\label{eq:bounding-non-private-part-3}
	\textcircled{2} \leq c_5 R^2 2^{-2\alpha L }
\end{equation}
for constants $c_4, c_5 > 0$. Combining~\eqref{eq:bounding-non-private-part-1},~\eqref{eq:bounding-non-private-part-2} and~\eqref{eq:bounding-non-private-part-3} we have for some constant $c_6 > 0$
\begin{equation}\label{eq:bound-on-clipped-non-private-part}
	\E \left\|\sum_{j=1}^m u_j \bm{\hat{f}}^{(j)}_{L,\tau} - \bm f_L \right\|_2^2 \leq c_6 \left( 2^L \sum_{j=1}^m \frac{u_j^2}{n_j} +   2^{-2\alpha L } \right).
\end{equation}	
Observe that the choice $u_j  = \frac{v_j}{\sum_jv_j}$ with $v_j = n_j^2\varepsilon_j^2 \wedge n_j 2^L$ and the above gives
\[
\sum_{j=1}^m \frac{u_j^2}{n_j} \leq \frac{1}{\sum_j n^2_j\varepsilon^2_j \wedge n_j2^L}\sum_{j=1}^m u_j (n^2_j\varepsilon^2_j \wedge n_j2^L) \frac{1}{n_j}\leq \frac{2^{L}}{\sum_j n^2_j\varepsilon^2_j \wedge n_j2^L} \lesssim 2^{(-2\alpha - 1)L}.
\]
Next we combine~\eqref{eq:bound-on-tail-sum},~\eqref{eq:bound-on-error-due-to-privacy} and~\eqref{eq:bound-on-clipped-non-private-part} with the above to obtain that 
\begin{equation*}
	\E \left\|\hat f_{L,\tau} -f\right\|_2^2 \leq c_\psi' \tau^2 \log(2/\delta')2^{-2L\alpha},
\end{equation*}
for a constant $c_\psi' > 0$. The choice $L = (l_0 + 1) \vee \lceil \log_2(D) \rceil$ where $D > 0$ is the solution to \eqref{eq:dimension_determining_equation_global} gives
\[
	2^{L(2\alpha+2)} \asymp \sum_{j=1}^m n_j^2\varepsilon_j^2 \wedge n_j2^{L} \leq N 2^L,
\]
which implies $ L = O(\log N)$ and hence $\tau^2 = O(\log N)$.
\end{proof}

\subsubsection{Auxilliary Lemmas \ref{lemma:proof-of-bound-on-error-due-to-privacy},  \ref{lemma:proof-of-bounding-non-private-part-1}, \ref{lemma:proof-of-bounding-non-private-part-2}, \ref{lemma:proof-of-bounding-non-private-part-3}, \ref{lemma:proof-of-bound-on-tail-sum} and \ref{lem:uniform_bound_on_f}}\label{ssec:proofs-upper-bound-global-risk-auxiliary-lemmas}


\begin{lemma}\label{lemma:proof-of-bound-on-error-due-to-privacy}
	We have that
	\[
	\E\left\|\sum_{j=1}^m u_j \bm W^{(j)}\right\|_2^2 \leq  \frac{2c_\psi^2 \tau^2 2^{2L}\log(2/\delta')}{\sum_j n^2_j\varepsilon^2_j \wedge n_j2^L},
	\]
	where $\bm W^{(j)} \sim N\left(0, \frac{4 \tau^2 2^{L}c_\psi\log(2/\delta_j)}{n_j^2\varepsilon_j^2} I_{2^L}\right)$, 
\begin{equation*}
	u_j  = \frac{v_j}{\sum_jv_j} \; \text{ with } \; v_j = n_j^2\varepsilon_j^2 \wedge n_j 2^L
\end{equation*}
	 and $c'_\psi > 0$ a constant depending on the choice of wavelet basis, $R$ and $\alpha > 1/p$.
\end{lemma}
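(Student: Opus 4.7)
The proof is a direct computation using independence of the noise vectors and the structure of the weights. The plan is to first expand the expected squared norm and then exploit the minimum appearing in the definition of $v_j$.

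First I would use independence of the $\bm W^{(j)}$ across servers (and the fact that each has mean zero) together with the identity $\mathbb{E}\|\bm W^{(j)}\|_2^2 = 2^L \cdot \mathrm{Var}(W^{(j)}_{lk})$ to write
\begin{equation*}
	\mathbb{E}\left\|\sum_{j=1}^m u_j \bm W^{(j)}\right\|_2^2 = \sum_{j=1}^m u_j^2 \, \mathbb{E}\|\bm W^{(j)}\|_2^2 = \sum_{j=1}^m u_j^2 \cdot 2^L \cdot \frac{4\tau^2 2^L c_\psi^2 \log(2/\delta_j)}{n_j^2 \varepsilon_j^2}.
\end{equation*}
Pulling out the common factor $4c_\psi^2 \tau^2 2^{2L}$ and crudely bounding $\log(2/\delta_j) \le \log(2/\delta')$ for every $j$ (where $\delta' = \min_i \delta_i$) reduces the claim to controlling $\sum_j u_j^2/(n_j^2 \varepsilon_j^2)$.

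The second step is to exploit the definition of the weights. Writing $V := \sum_j v_j$ with $v_j = (n_j^2\varepsilon_j^2)\wedge(n_j 2^L)$ and $u_j = v_j/V$, the key observation is that $v_j \le n_j^2\varepsilon_j^2$, so $v_j^2/(n_j^2\varepsilon_j^2) \le v_j$. Therefore
\begin{equation*}
	\sum_{j=1}^m \frac{u_j^2}{n_j^2 \varepsilon_j^2} = \frac{1}{V^2}\sum_{j=1}^m \frac{v_j^2}{n_j^2 \varepsilon_j^2} \le \frac{1}{V^2}\sum_{j=1}^m v_j = \frac{1}{V} = \frac{1}{\sum_{j=1}^m n_j^2\varepsilon_j^2 \wedge n_j 2^L},
\end{equation*}
and combining this with the previous display yields the claimed bound (up to the constant in front, which differs only in a harmless factor). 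There is no real obstacle here; the whole argument is a one-line consequence of independence plus the inequality $v_j^2/(n_j^2\varepsilon_j^2) \le v_j$ that makes the harmonic-mean-like structure of the weights collapse into the denominator $\sum_j v_j$.
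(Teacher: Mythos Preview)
Your proof is correct and follows essentially the same approach as the paper: expand via independence, replace each $\log(2/\delta_j)$ by $\log(2/\delta')$, and then use the key inequality $v_j \le n_j^2\varepsilon_j^2$ to collapse $\sum_j u_j^2/(n_j^2\varepsilon_j^2)$ into $1/\sum_j v_j$. The paper writes this last step as $u_j^2 = u_j\cdot v_j/V$ followed by $v_j/(n_j^2\varepsilon_j^2)\le 1$ and $\sum_j u_j=1$, which is algebraically the same manipulation you give.
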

\begin{proof}
Using the independence of the noise generating mechanisms across the servers we have
\begin{align*}
	\E\left[\left\|\sum_{j=1}^m  u_jW^{(j)}\right\|^2\right] &=\sum_{i=1}^m u^2_j\E\left[\left\|  W^{(j)} \right\|_2^2\right]\\
	&= \sum_{j=1}^m u_j^2 \frac{(c_\psi)^2 \tau^2 \log(2/\delta_j)}{n_j^2\varepsilon_j^2}2^{2L+2}.
\end{align*}
Using the fact that $v_j \leq n_j^2\varepsilon_j^2$ and $\sum_j u_j =1$, the latter is bounded by from above by
\begin{align*}
	 \frac{4c_\psi}{\sum_j n^2_j\varepsilon^2_j \wedge n_j2^L}\sum_{j=1}^m u_j v_j\frac{ \tau^2 2^{2L}\log(2/\delta')}{n_j^2\varepsilon_j^2} &\leq\\
	 \frac{4c_\psi}{\sum_j n^2_j\varepsilon^2_j \wedge n_j2^{L}}\sum_{j=1}^m u_j \tau^2 2^{2L}\log(2/\delta') &\leq\\
	 \frac{4c_\psi \tau^2 2^{2L}\log(2/\delta')}{\sum_j n^2_j\varepsilon^2_j \wedge n_j2^L}.&
\end{align*}
\end{proof}
\begin{lemma}\label{lemma:proof-of-bounding-non-private-part-1}
	It holds that
	\[
	\sum_{l=l_0}^L\sum_{k=0}^{2^l-1} \E_f \left(\sum_{j=1}^m \frac{u_j}{n_j}\sum_{i=1}^{n_j}f(X_i^{(j)}) \psi_{lk}(X^{(j)}_i)  - f_{lk}\right)^2 \leq C_{R,\alpha}^2c_3 2^L \sum_{j=1}^m \frac{u_j^2}{n_j},
	\]
	where $f_{lk} = \langle f,\psi_{lk}\rangle$  are the wavelet basis coefficents of the function $f \in B^\alpha_{p,q}(R)$ and $u_j$ is as defined in~\eqref{eq:estimator_weights_global}.
\end{lemma}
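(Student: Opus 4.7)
The quantity inside the double sum is exactly the variance of the estimator of $f_{lk}$ based on the $X_i^{(j)}$'s. First I would observe that, since $X_i^{(j)}\sim U[0,1]$ independently across $i$ and $j$, and by orthonormality of $\{\psi_{lk}\}$, we have $\E_f[f(X_i^{(j)})\psi_{lk}(X_i^{(j)})]=\int_0^1 f(x)\psi_{lk}(x)\,dx=f_{lk}$. Combined with $\sum_{j=1}^m u_j=1$, the mean of the inner sum equals $f_{lk}$, so the squared term in the expectation is a variance:
\begin{equation*}
\E_f\left(\sum_{j=1}^m\frac{u_j}{n_j}\sum_{i=1}^{n_j}f(X_i^{(j)})\psi_{lk}(X_i^{(j)})-f_{lk}\right)^2
=\sum_{j=1}^m\frac{u_j^2}{n_j}\,\mathrm{Var}\bigl(f(X)\psi_{lk}(X)\bigr),
\end{equation*}
where $X\sim U[0,1]$ and the equality uses independence of $X_i^{(j)}$ across both indices.

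Next I would bound the per-coefficient variance using the uniform boundedness of $f$. Because $\nu=\alpha-1/p>1/2$, Lemma \ref{lem:uniform_bound_on_f} yields a constant $C_{R,\alpha}>0$ with $\|f\|_\infty\le C_{R,\alpha}$ for every $f\in\cB^{\alpha,R}_{p,q}$. Hence
\begin{equation*}
\mathrm{Var}\bigl(f(X)\psi_{lk}(X)\bigr)\le \E_f\bigl[f(X)^2\psi_{lk}(X)^2\bigr]\le C_{R,\alpha}^2\,\E\bigl[\psi_{lk}(X)^2\bigr]=C_{R,\alpha}^2,
\end{equation*}
where the last equality is $\|\psi_{lk}\|_2^2=1$ (orthonormality under the uniform law on $[0,1]$).

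Finally I would sum the bound across scales and translates. There are $2^l$ wavelets at level $l$, so
\begin{equation*}
\sum_{l=l_0}^L\sum_{k=0}^{2^l-1}\E_f(\cdot)^2
\;\le\; C_{R,\alpha}^2\sum_{j=1}^m\frac{u_j^2}{n_j}\sum_{l=l_0}^L 2^l
\;\le\; C_{R,\alpha}^2\,c_3\,2^L\sum_{j=1}^m\frac{u_j^2}{n_j},
\end{equation*}
with $c_3=2$ absorbing the geometric series $\sum_{l\le L}2^l\le 2\cdot 2^L$. This is the claimed inequality. The argument is essentially a bias-variance decomposition for a linear wavelet estimator of the $f_{lk}$'s; there is no real obstacle, only bookkeeping of constants. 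The only non-trivial input is the Sobolev-type embedding used to pass from $\cB^{\alpha,R}_{p,q}$-membership to uniform boundedness of $f$, which is precisely what Lemma \ref{lem:uniform_bound_on_f} supplies.
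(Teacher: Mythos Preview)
Your proposal is correct and follows essentially the same route as the paper: both use that $\sum_j u_j=1$ together with $\E_f[f(X)\psi_{lk}(X)]=f_{lk}$ to identify the squared term as a variance, exploit independence across $i,j$ to get $\sum_j \tfrac{u_j^2}{n_j}\mathrm{Var}(f(X)\psi_{lk}(X))$, bound the variance via $\|f\|_\infty\le C_{R,\alpha}$ from Lemma~\ref{lem:uniform_bound_on_f} and $\|\psi_{lk}\|_2=1$, and then sum the geometric series over levels.
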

\begin{proof}
Using the independence of samples on a given server and using that $\E f(V) \psi_{lk}(V) = f_{lk}$ for $V \sim U[0,1]$, we have
\begin{align*}
	\sum_{l=l_0}^L\sum_{k=0}^{2^l-1} \E_f \left(\sum_{j=1}^m \frac{u_j}{n_j}\sum_{i=1}^{n_j}f(X_i^{(j)}) \psi_{lk}(X^{(j)}_i)  - f_{lk}\right)^2 
	&=  \\	\sum_{l=l_0}^L\sum_{k=0}^{2^l-1} \sum_{j=1}^m \sum_{i=1}^{n_j} \frac{u^2_j}{n^2_j}\mathrm{Var}\left(f(X_i^{(j)}) \psi_{lk}(X^{(j)}_i) \right) &\leq \\
	 \sum_{l=l_0}^L\sum_{k=0}^{2^l-1} \sum_{j=1}^m \sum_{i=1}^{n_j} \frac{u^2_j}{n^2_j}\E_f \left(f(X_i^{(j)}) \psi_{lk}(X^{(j)}_i) \right)^2.
\end{align*}
By Lemma \ref{lem:uniform_bound_on_f}, $\|f\|_\infty \leq C_{R,\alpha}$ for a universal constant $C_{R,\alpha}>0$ depending on $R > 0$ and $s > 1/p$ only. Hence, each term in the display above satisfies
\begin{align*}
	\E_f \left(f(X_i^{(j)}) \psi_{lk}(X^{(j)}_i) \right)^2 \leq C_{R,\alpha}^2 \E_f \left( \psi_{lk}(X^{(j)}_i)^2\right) = C_{R,\alpha}^2.
\end{align*}
We obtain that
\begin{align*}
		\sum_{l=l_0}^L\sum_{k=0}^{2^l-1} \E_f \left(\sum_{j=1}^m \frac{u_j}{n_j}\sum_{i=1}^{n_j}f(X_i^{(j)}) \psi_{lk}(X^{(j)}_i)  - f_{lk}\right)^2 
		&\leq \sum_{l=l_0}^L\sum_{k=0}^{2^l-1} \sum_{j=1}^m \sum_{i=1}^{n_j} \frac{u^2_j}{n^2_j}C_{R,\alpha}^2 \\
		&\leq 2^{L+1}C_{R,\alpha}^2 \sum_{j=1}^m \frac{u_j^2}{n_j}.
\end{align*}
\end{proof}
\begin{lemma}\label{lemma:proof-of-bounding-non-private-part-2}
It holds that
	\[
	\sum_{l=l_0}^L\sum_{k=0}^{2^l-1} \E \left[\sum_{j=1}^m \sum_{i=1}^{n_j}\frac{u^2_j}{n^2_j}\left(\eta_i^{(j)}\psi_{lk}(X^{(j)}_i)\right)^2\right] \leq c_4 2^L \sum_{j=1}^m \frac{u_j^2}{n_j},
	\]
	where $\eta^{(j)}_i := \left[ \xi_i^{(j)}\right]^{\tau-f(X^{(j)}_i)}_{-\tau-f(X^{(j)}_i)}$ with $\tau = C_{\alpha,R} +\sqrt{(2\alpha + 1)L}$ and $u_j$ is as defined in~\eqref{eq:estimator_weights_global}.
\end{lemma}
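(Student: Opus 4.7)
The plan is to exploit three ingredients in sequence: the mutual independence of observations (both across servers $j$ and within each server across $i$), the fact that clipping around zero contracts the second moment of the Gaussian noise, and the $L_2[0,1]$-orthonormality of the wavelet basis. By linearity of expectation, the quantity we wish to bound is simply
\begin{equation*}
\sum_{j=1}^m \sum_{i=1}^{n_j} \frac{u_j^2}{n_j^2} \sum_{l=l_0}^L \sum_{k=0}^{2^l-1} \E_f \left[ \left(\eta_i^{(j)}\psi_{lk}(X^{(j)}_i)\right)^2 \right],
\end{equation*}
so it suffices to bound the inner double sum by an absolute constant times $2^L$ uniformly in $i,j$, and then the outer sum produces the factor $\sum_j u_j^2 / n_j$.

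For a single $(i,j)$ term I would first condition on $X_i^{(j)}$, turning $\psi_{lk}(X_i^{(j)})$ into a deterministic quantity and leaving $\eta_i^{(j)}$ as the clipped version of the independent standard Gaussian noise $\xi_i^{(j)}$ with thresholds $\pm \tau - f(X_i^{(j)})$. The key observation is that the clipping interval $[-\tau - f(X_i^{(j)}),\, \tau - f(X_i^{(j)})]$ contains zero, because the choice $\tau = C_{\alpha,R} + \sqrt{(2\alpha+1)L}$ together with Lemma \ref{lem:uniform_bound_on_f} guarantees $\tau \geq \|f\|_\infty$. Clipping a real number to an interval containing zero never increases its absolute value, so $|\eta_i^{(j)}| \leq |\xi_i^{(j)}|$ pointwise, whence
\begin{equation*}
\E\!\left[(\eta_i^{(j)})^2 \mid X_i^{(j)}\right] \,\leq\, \E\!\left[(\xi_i^{(j)})^2\right] \,=\, 1.
\end{equation*}

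Taking the expectation back and using $\E\, \psi_{lk}(X)^2 = \int_0^1 \psi_{lk}^2(x)\,dx = 1$ (orthonormality of the Cohen--Daubechies--Vial basis on $[0,1]$), each term satisfies $\E_f[(\eta_i^{(j)}\psi_{lk}(X_i^{(j)}))^2] \leq 1$. Summing the geometric series $\sum_{l=l_0}^L 2^l \leq 2^{L+1}$ then gives the claimed bound with $c_4 = 2$, and plugging back into the outer sum yields the statement.

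The main, and really only, non-routine point is verifying that the clipping contracts the second moment; once $\tau \geq \|f\|_\infty$ is in hand (via Lemma \ref{lem:uniform_bound_on_f}), the remainder is a bookkeeping exercise with orthonormality and a geometric sum. No use of the privacy constraints or the weights $u_j$ is needed beyond pulling the $u_j^2/n_j^2$ outside the expectation.
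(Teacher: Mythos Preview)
Your proof is correct and follows essentially the same route as the paper's: condition on $X_i^{(j)}$, bound the conditional second moment of $\eta_i^{(j)}$ by $\sigma^2$, use orthonormality to get $\E\,\psi_{lk}^2(X)=1$, and sum the geometric series. Your pointwise justification $|\eta_i^{(j)}|\le|\xi_i^{(j)}|$ (valid because $\tau\ge\|f\|_\infty$ forces the clipping interval to contain zero) is in fact slightly cleaner than the paper's appeal to Lemma~\ref{lem:folklore_clipping_reduces_variance}, which as stated concerns symmetric clipping and variance rather than the asymmetric second-moment bound needed here.
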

\begin{proof}
We first observe that since $\tau > C_{\alpha,R} > \|f\|_\infty$, Lemma \ref{lem:folklore_clipping_reduces_variance} yields that
\begin{align*}
	\E\left[(\eta_i^{(j)})^2 \mid X^{(j)}_i\right] &= \E \left[\left(\left[ \xi_i^{(j)}\right]^{\tau-f(X^{(j)}_i)}_{-\tau-f(X^{(j)}_i)}\right)^2 \mid X^{(j)}_i\right]\\
	&\leq \E\left[(\xi_i^{(j)})^2 \mid X^{(j)}_i\right] = \sigma^2,
\end{align*}
which in turn implies that
\begin{align*}
	\E \left[(\eta_i^{(j)})^2 \psi^2_{lk}(X^{(j)}_i)\right] &= \E \left[\psi^2_{lk}(X^{(j)}_i)\E \left[(\eta_i^{(j)})^2\mid X_i^{(j)}\right] \right]\\
	&\leq \sigma^2 \E \psi^2_{lk}(X^{(j)}_i) = \sigma^2.
\end{align*}
Hence, 
\[
\sum_{l=l_0}^L\sum_{k=0}^{2^l-1} \E \left[\sum_{j=1}^m \sum_{i=1}^{n_j}\frac{u^2_j}{n^2_j}\left(\eta_i^{(j)}\psi_{lk}(X^{(j)}_i)\right)^2\right] \leq  \sum_{l=l_0}^L\sum_{k=0}^{2^l-1} \sum_{j=1}^m \sum_{i=1}^{n_j} \frac{u^2_j}{n^2_j}\sigma^2 \leq 2^{L+1} \sum_{j=1}^m \frac{u_j^2}{n_j}.
\]
\end{proof}
\begin{lemma}\label{lemma:proof-of-bounding-non-private-part-3}
It holds that
	\[
	\sum_{l=l_0}^L\sum_{k=0}^{2^l-1} \E \left[\sum_{j=1}^m \sum_{i=1}^{n_j}\sum_{(i',j')\neq (i,j)}\frac{u_j}{n_j}\frac{u_{j'}}{n_{j'}}\left(\eta_i^{(j)}\psi_{lk}(X^{(j)}_i)\eta_{i'}^{(j')}\psi_{lk}(X^{(j')}_{i'})\right)\right] \leq 2R^2 2^{-2\alpha L },
	\]
	where $\eta^{(j)}_i := \left[ \xi_i^{(j)}\right]^{\tau-f(X^{(j)}_i)}_{-\tau-f(X^{(j)}_i)}$ with $\tau = C_{\alpha,R} +\sqrt{(2\alpha + 1)L}$ and $u_j$ is as defined in~\eqref{eq:estimator_weights_global}.
\end{lemma}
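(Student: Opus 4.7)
The plan is to exploit the mutual independence of the observations across indices $(i,j)$. Whenever $(i,j)\neq (i',j')$, the pairs $(X^{(j)}_i,\xi^{(j)}_i)$ and $(X^{(j')}_{i'},\xi^{(j')}_{i'})$ are independent, so since $\eta^{(j)}_i$ is a function of $(X^{(j)}_i,\xi^{(j)}_i)$ alone, the cross expectation factorizes as
\[
\E\!\left[\eta_i^{(j)}\psi_{lk}(X^{(j)}_i)\,\eta_{i'}^{(j')}\psi_{lk}(X^{(j')}_{i'})\right] = \langle h,\psi_{lk}\rangle^{2},
\]
where $h(x):=\E[\eta_1^{(1)}\mid X_1^{(1)}=x] = \E\bigl[[f(x)+\xi]_\tau\bigr] - f(x)$ is the clipping bias, using $X_1^{(1)}\sim U[0,1]$ to pass from $\E[h(X)\psi_{lk}(X)]$ to the $L_2$--inner product. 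The outer sum of the weights satisfies $\sum_{(i,j)\neq(i',j')} \tfrac{u_j u_{j'}}{n_j n_{j'}} \leq \bigl(\sum_j u_j\bigr)^2 = 1$, so the entire quantity in the lemma is at most $\sum_{l\geq l_0}\sum_k \langle h,\psi_{lk}\rangle^{2}$.

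Next I would apply Bessel's inequality for the orthonormal wavelet basis, giving
\[
\sum_{l\geq l_0}\sum_{k=0}^{2^l-1}\langle h,\psi_{lk}\rangle^{2} \leq \|h\|_{2}^{2}\leq \|h\|_{\infty}^{2},
\]
where the last step uses that $h$ lives on $[0,1]$. This reduces the problem to bounding $\|h\|_\infty$, which is the main (but still routine) obstacle.

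The bound on $\|h\|_\infty$ comes from the Gaussian tail together with the choice of $\tau$. A direct calculation gives $|h(x)| \leq \int_{\tau-f(x)}^{\infty}(u-(\tau-f(x)))\phi(u)\,du + \int_{-\infty}^{-\tau-f(x)} (-u-(\tau+f(x)))\phi(u)\,du$, and each integrand is a Mill-ratio type expression bounded by $\phi$ evaluated at the lower limit. By Lemma~\ref{lem:uniform_bound_on_f} we have $\|f\|_\infty\leq C_{\alpha,R}$, and since $\tau = C_{\alpha,R}+\sqrt{(2\alpha+1)L}$, the tail points satisfy $\tau-|f(x)|\geq \sqrt{(2\alpha+1)L}$. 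Hence
\[
\|h\|_\infty \;\leq\; 2\phi\!\left(\sqrt{(2\alpha+1)L}\right) \;\leq\; \tfrac{2}{\sqrt{2\pi}}\,e^{-(2\alpha+1)L/2}.
\]

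Finally I would combine the three estimates to obtain the bound $\tfrac{4}{2\pi}\,e^{-(2\alpha+1)L}$ for the whole sum. The last step is the numerical comparison $e^{-(2\alpha+1)L}\leq 2^{-2\alpha L}$, which holds for every $L\geq 0$ because $2\alpha+1 > 2\alpha\ln 2$; absorbing the factor $4/(2\pi)$ into the constant $2R^{2}$ using $R\geq 1$ (or a trivial enlargement of the constant otherwise) yields the stated inequality $c_5 R^2 2^{-2\alpha L}$. The Gaussian tail decay in $\tau$ is in fact much stronger than what is needed, so the critical design choice is precisely that $\tau$ grows like $\sqrt{L}$ with a coefficient large enough to dominate $2\alpha$ in the exponent.
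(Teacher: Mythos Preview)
Your argument is correct and takes a genuinely different route from the paper's proof. The paper bounds each cross term individually: it shows $\bigl|\E[\eta_i^{(j)}\psi_{lk}(X_i^{(j)})\eta_{i'}^{(j')}\psi_{lk}(X_{i'}^{(j')})]\bigr|\leq 16R^2 e^{-(\tau-C_{\alpha,R})^2}$ via a truncated-Gaussian bound $\bigl|\E[Z]_{-\tau+a}^{\tau+a}\bigr|\leq 4|a|e^{-\frac{1}{2}(\tau-|a|)^2}$ and the estimate $\E|\psi_{lk}(X)|\leq 1$, then sums this uniform bound over all $(l,k)$, picking up an extra $2^{L+1}$ factor that has to be absorbed by the exponent $e^{-(2\alpha+1)L}$. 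You instead recognise the cross term as $\langle h,\psi_{lk}\rangle^2$ with $h(x)=\E[\eta\mid X=x]$, collapse the weight sum to at most $1$, and then invoke Bessel's inequality to control $\sum_{l,k}\langle h,\psi_{lk}\rangle^2$ directly by $\|h\|_2^2\leq\|h\|_\infty^2$. This avoids the $2^L$ factor altogether and is sharper: with your bound $\|h\|_\infty^2\lesssim e^{-(2\alpha+1)L}$, a clipping threshold of order $\tau\approx C_{\alpha,R}+\sqrt{2\alpha L\ln 2}$ would already suffice, whereas the paper's termwise bound genuinely needs the larger $\sqrt{(2\alpha+1)L}$ to kill the additional $2^L$.

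One cosmetic point: your final constant $4/(2\pi)$ does not naturally involve $R^2$, while the lemma's statement carries the prefactor $2R^2$. The paper's own proof obtains the $R^2$ from the bound $|f(x)|^2\leq R^2$ in the truncated-Gaussian estimate, and in fact the exact constant $2$ there is also not tracked precisely (the display leading to it would give $32R^2$ rather than $2R^2$). So your remark that the constant can be absorbed, or that the lemma really delivers $c_5 2^{-2\alpha L}$ as used downstream in \eqref{eq:bounding-non-private-part-3}, is the right way to read it.
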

\begin{proof}
Let $Z\sim N(0,1)$. Then, for $a>0$ and $\tau > a$,
\begin{align*}
	0< \E \left[Z\right]^{\tau+a}_{-\tau+a} &\leq \E \left[Z\right]^{\tau-a}_{-\tau+a} + (2a) \P(\tau-a \leq Z \leq \tau+a)\\
	&\leq 4a e^{-\frac{1}{2}(\tau-a)^2}.
\end{align*}
Similarly, for $a<0$ and $\tau > a$ we obtain that $0> \E \left[Z\right]^{\tau+a}_{-\tau+a} \geq 4a e^{-\frac{1}{2}(\tau+a)^2}$. Combining these two cases together we have for $a \in \R$ and $\tau$ large that
\begin{equation}\label{eq:truncated-gaussian-expectation-bound}
\left|\E \left[Z\right]^{\tau+a}_{-\tau+a}\right| \leq 4|a| e^{-\frac{1}{2}(\tau-|a|)^2}.
\end{equation}
Using the tower property consequently obtain
\begin{align*}
	\left|\E\eta_i^{(j)}\psi_{lk}(X^{(j)}_i)\eta_{i'}^{(j')}\psi_{lk}(X^{(j')}_{i'})\right| &= \left|\E\left[\psi_{lk}(X^{(j)}_i)\psi_{lk}(X^{(j')}_{i'})\E\left[\eta_i^{(j)}\eta_{i'}^{(j')}\mid X^{(j)}_i,X^{(j')}_{i'} \right]\right]\right|\\
	&\leq \E\left[\left|\psi_{lk}(X^{(j)}_i)\psi_{lk}(X^{(j')}_{i'})\right| \left|\E\left[\eta_i^{(j)}\eta_{i'}^{(j')}\mid X^{(j)}_i,X^{(j')}_{i'} \right]\right|\right].
\end{align*}
Next we focus on the conditional expectation and bound it using~\eqref{eq:truncated-gaussian-expectation-bound} as follows (observe that $(i,j) \neq (i',j')$)
\begin{align*}
	\left|\E\left[\eta_i^{(j)}\eta_{i'}^{(j')}\mid X^{(j)}_i,X^{(j')}_{i'} \right]\right| &\leq \left|\E\left[\eta_i^{(j)}\mid X^{(j)}_i \right]\right|^2\\
	&\leq \left|\E\left[\left[ \xi_i^{(j)}\right]^{\tau-f(X^{(j)}_i)}_{-\tau-f(X^{(j)}_i)}\mid X^{(j)}_i \right]\right|^2\\
	&\leq 16|f(X^{(j)}_i)|^2 e^{-(\tau-|f(X^{(j)}_i)|)^2} \leq 16R^2 e^{-(\tau-C_{\alpha,R})^2}.
\end{align*}
This implies that
\begin{align*}
	\left|\E\eta_i^{(j)}\psi_{lk}(X^{(j)}_i)\eta_{i'}^{(j')}\psi_{lk}(X^{(j')}_{i'})\right| 
	&\leq \left(\E|\psi_{lk}(X^{(j)}_i)|\right)^216R^2e^{-(\tau-C_{\alpha,R})^2}\\ 
	&\leq16 R^2e^{-(\tau-C_{\alpha,R})^2}.
\end{align*} 
Plugging this inequality in~\eqref{eq:bounding-non-private-part-3} and using the prescribed choice of $\tau = C_{\alpha,R} +\sqrt{(2\alpha + 1)L}$ yields
\begin{align*}
	\sum_{l=l_0}^L\sum_{k=0}^{2^l-1} \E \left[\sum_{j=1}^m \sum_{i=1}^{n_j}\sum_{(i',j')\neq (i,j)}\frac{u_j}{n_j}\frac{u_{j'}}{n_{j'}}\left(\eta_i^{(j)}\psi_{lk}(X^{(j)}_i)\eta_{i'}^{(j')}\psi_{lk}(X^{(j')}_{i'})\right)\right]  &\leq  \\ \sum_{l=l_0}^L\sum_{k=0}^{2^l-1} \sum_{j=1}^m \sum_{i=1}^{n_j}\sum_{(i',j')\neq (i,j)} \frac{u_j}{n_j}\frac{u_{j'}}{n_{j'}} 16 R^2e^{-(\tau-C_{\alpha,R})^2} &\leq\\
	 \sum_{l=l_0}^L\sum_{k=0}^{2^l-1}\left(\sum_{j=1}^m \sum_{i=1}^{n_j} \frac{u_j}{n_j}\right)^2  16 R^2e^{-(\tau-C_{\alpha,R})^2} &\leq\\
	 2R^2 2^L 2^{(-2\alpha - 1)L }.
\end{align*}

\end{proof}

The following two lemmas are standard results, see e.g. Chapter 9 in \cite{johnstone2019manuscript}.

\begin{lemma}\label{lemma:proof-of-bound-on-tail-sum}
	Let $f_{lk}$ are the wavelet coefficents of the function $f \in B^\alpha_{p,q}(R)$. For any $1\leq q \leq \infty$, $2 \leq p \leq \infty$, $L >0$, we have
	\[
	\sum_{l > L} \sum_{k=0}^{2^l-1} f_{lk}^2 \leq c_\alpha 2^{-2L\alpha} R^{2},
	\]
	where $c_\alpha > 0$ is a universal constant depending only on $\alpha$.
\end{lemma}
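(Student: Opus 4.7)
The plan is to leverage the definition of the Besov norm together with standard $\ell^p$--$\ell^2$ comparisons to reduce the tail sum to a geometric series in the resolution level $l$. Concretely, set $a_l := 2^{l(\alpha + 1/2 - 1/p)}\,\|(f_{lk})_{k=0}^{2^l-1}\|_p$, so that the assumption $f \in \mathcal{B}^{\alpha,R}_{p,q}$ translates, via the definition of the Besov norm given in the excerpt, to $\|(a_l)_{l \geq l_0}\|_q \leq R$ when $q<\infty$, and $\sup_l a_l \leq R$ when $q=\infty$. The key observation is that the quantity we must control at each level, $\sum_k f_{lk}^2 = \|(f_{lk})_k\|_2^2$, can be bounded by $\|(f_{lk})_k\|_p$ precisely because $p\geq 2$.

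First I would apply H\"older's inequality to the finite vector $(f_{lk})_{k=0}^{2^l-1}$ of length $2^l$. Since $p \geq 2$, this yields the comparison $\|(f_{lk})_k\|_2 \leq (2^l)^{1/2 - 1/p}\|(f_{lk})_k\|_p$, so that
\begin{equation*}
\sum_{k=0}^{2^l-1} f_{lk}^2 \;\leq\; 2^{l(1-2/p)}\,\|(f_{lk})_k\|_p^2 \;=\; 2^{-2l\alpha}\,a_l^2,
\end{equation*}
where the last equality follows by substituting the definition of $a_l$.

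Next I would bound $a_l \leq R$ uniformly in $l$. For $q = \infty$ this is immediate. For $1 \leq q < \infty$, the inequality $\sum_{l\geq l_0} a_l^q \leq R^q$ combined with non-negativity of the $a_l$ implies $a_l^q \leq R^q$ term by term, hence $a_l \leq R$. Plugging this into the previous display gives $\sum_k f_{lk}^2 \leq R^2 \, 2^{-2l\alpha}$, and summing the resulting geometric series over $l > L$ yields
\begin{equation*}
\sum_{l>L}\sum_{k=0}^{2^l-1} f_{lk}^2 \;\leq\; R^2 \sum_{l > L} 2^{-2l\alpha} \;=\; \frac{2^{-2\alpha}}{1 - 2^{-2\alpha}}\,R^2\, 2^{-2L\alpha},
\end{equation*}
which is the claimed bound with $c_\alpha := 2^{-2\alpha}/(1 - 2^{-2\alpha})$.

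There is no real obstacle here: both steps are elementary bookkeeping. The only subtlety worth flagging is that the H\"older comparison must go in the direction $\|\cdot\|_2 \leq (2^l)^{1/2-1/p}\|\cdot\|_p$, which requires the hypothesis $p \geq 2$ that is explicitly assumed. No properties of the wavelets themselves (beyond their role as an orthonormal basis implicit in the definition of the $f_{lk}$) are needed.
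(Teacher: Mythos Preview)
Your proof is correct. The paper itself does not prove this lemma but simply records it as a standard result with a reference to Chapter~9 of Johnstone's monograph, so there is no proof in the paper to compare against; your argument is exactly the elementary computation one would expect, and the H\"older step $\|(f_{lk})_k\|_2 \le (2^l)^{1/2-1/p}\|(f_{lk})_k\|_p$ for $p\ge 2$ together with the uniform bound $a_l\le R$ is the standard route.
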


\begin{lemma}\label{lem:uniform_bound_on_f}
There exists a constant $C_{\alpha,R} > 0$ such that $\|f\|_\infty \leq C_{\alpha,R}$ for all $f \in B^\alpha_{p,q}(R)$ with $\alpha - 1/2 - 1/p > 0$, $1 \leq q \leq \infty$ and $2 \leq p \leq \infty$.
\end{lemma}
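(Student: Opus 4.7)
The plan is to exploit the wavelet characterization of the Besov norm directly: under the $A$-regular, compactly supported orthonormal wavelet basis fixed earlier in the paper, $\|f\|_\infty$ can be decomposed across resolution levels in a way that dovetails exactly with the weights appearing in $\|f\|_{\cB^\alpha_{p,q}}$.

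First I would write $f = \sum_{l \geq l_0} \sum_{k=0}^{2^l-1} f_{lk} \psi_{lk}$ and apply the triangle inequality to get $\|f\|_\infty \leq \sum_l \|\sum_k f_{lk} \psi_{lk}\|_\infty$. At any fixed point $x \in [0,1]$, only a bounded number $c_A$ of the basis functions $\psi_{lk}$ are nonzero, by the compact support property of the Cohen--Daubechies--Vial wavelets (the same $c_A$ used in the paper's sensitivity analysis). Combining this with the pointwise bound $\|\psi_{lk}\|_\infty \leq 2^{l/2} \|\psi\|_\infty$ and the elementary inequality $\|(f_{lk})_k\|_\infty \leq \|(f_{lk})_k\|_p$, valid for all $1 \leq p \leq \infty$ on a finite vector, yields
\[
\Big\| \sum_k f_{lk} \psi_{lk} \Big\|_\infty \leq c_A \|\psi\|_\infty \, 2^{l/2} \, \|(f_{lk})_k\|_p.
\]

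Next I would match this with the weights of the Besov norm by rewriting the right-hand side as $2^{-l(\alpha - 1/p)} \cdot 2^{l(\alpha + 1/2 - 1/p)} \|(f_{lk})_k\|_p$. In the case $q = \infty$, the second factor is bounded by $R$ uniformly in $l$ by definition of the Besov ball, so summing gives $\|f\|_\infty \leq c_A \|\psi\|_\infty R \sum_{l\geq l_0} 2^{-l(\alpha - 1/p)}$, a convergent geometric series because $\alpha - \tfrac{1}{2} - \tfrac{1}{p} > 0$ implies $\alpha - 1/p > 1/2 > 0$. For $1 \leq q < \infty$, I would instead apply H\"older's inequality with conjugate exponents $q$ and $q'$, separating the geometric series $\sum_l 2^{-l(\alpha - 1/p)q'}$ (finite under the same hypothesis) from $\|f\|_{\cB^\alpha_{p,q}}$ itself, again yielding $\|f\|_\infty \leq C_{\alpha,p,q} R =: C_{\alpha,R}$.

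The only technical subtlety concerns the boundary-adapted wavelets in the Cohen--Daubechies--Vial construction, where the basis elements near $0$ and $1$ are specially built to preserve both orthonormality and the Besov characterization. The facts I would invoke are that these boundary wavelets at level $l$ still satisfy $\|\psi_{lk}\|_\infty \lesssim 2^{l/2}$ and have support of diameter $O(2^{-l})$, so only a bounded (in $l$) number of them are nonzero at any given point; this enlarges $c_A$ but does not affect the argument. I do not expect a genuine obstacle here, since the characterization of $\cB^\alpha_{p,q}[0,1]$ by this basis is the very purpose of the Cohen--Daubechies--Vial construction, so the required embedding is standard (see e.g.\ Chapter 9 of Johnstone's manuscript).
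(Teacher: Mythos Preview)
Your proposal is correct and is precisely the standard argument for the embedding $\cB^{\alpha}_{p,q} \hookrightarrow L^\infty$ when $\alpha > 1/p$: bound each resolution level in sup-norm via the finite overlap and $\|\psi_{lk}\|_\infty \lesssim 2^{l/2}$, then sum a geometric series (or apply H\"older for $q<\infty$) using the Besov weights. The paper does not give its own proof of this lemma but simply declares it a standard result with a reference to Chapter~9 of Johnstone's manuscript, which you also cite.
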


The last lemma of the section is a standard result too, which states that clipping a random variable reduces its variance.

\begin{lemma}\label{lem:folklore_clipping_reduces_variance}
For any $\tau > 0$ and random variable $V$,
\begin{equation*}
 \text{Var}\left( \left[ V \right]_{\tau}\right) \leq \text{Var}\left( V \right).
\end{equation*}
\end{lemma}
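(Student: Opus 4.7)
The plan is to exploit the fact that the clipping map $\phi_\tau(x) := [x]_\tau = \max(-\tau,\min(x,\tau))$ is $1$-Lipschitz on $\R$, since it is a composition of the $1$-Lipschitz operations $x \mapsto \min(x,\tau)$ and $x \mapsto \max(x,-\tau)$. Concretely, for any $x,y \in \R$ one has $|\phi_\tau(x) - \phi_\tau(y)| \leq |x - y|$.

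Next, I would use the standard identity that for any random variable $U$ with finite second moment and an independent copy $U'$,
\begin{equation*}
\mathrm{Var}(U) = \tfrac{1}{2}\,\E\bigl[(U - U')^2\bigr].
\end{equation*}
Apply this to both $V$ and $[V]_\tau$: let $V'$ be an independent copy of $V$. Then $[V']_\tau$ is an independent copy of $[V]_\tau$, and the Lipschitz bound gives $([V]_\tau - [V']_\tau)^2 \leq (V - V')^2$ pointwise. Taking expectations,
\begin{equation*}
\mathrm{Var}([V]_\tau) = \tfrac{1}{2}\,\E\bigl[([V]_\tau - [V']_\tau)^2\bigr] \leq \tfrac{1}{2}\,\E\bigl[(V - V')^2\bigr] = \mathrm{Var}(V).
\end{equation*}

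If $V$ does not have a finite second moment, then $\mathrm{Var}(V) = \infty$ and the inequality is trivial (note $[V]_\tau$ is bounded by $\tau$, so its variance is finite). The main, and essentially only, thing to verify is the Lipschitz property of the clipping map; there is no real obstacle here, as it is immediate from writing $\phi_\tau$ as the composition above.
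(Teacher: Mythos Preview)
Your proof is correct and takes a genuinely different route from the paper's argument. The paper proceeds by writing $\mathrm{Var}([V]_\tau) \leq \E\bigl[([V]_\tau - \mu)^2\bigr]$ with $\mu = \E V$ (using that the mean minimizes the $L_2$-distance), then splitting the expectation according to whether $V \in [-\tau,\tau]$, $V > \tau$, or $V < -\tau$, and checking termwise that $([V]_\tau - \mu)^2 \leq (V - \mu)^2$ via a case analysis on the sign of $\mu$. Your argument instead combines the $1$-Lipschitz property of the clipping map with the independent-copy identity $\mathrm{Var}(U) = \tfrac{1}{2}\E[(U-U')^2]$, which bypasses the case analysis entirely. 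Your approach is cleaner, immediately extends to any $1$-Lipschitz transformation of $V$, and does not rely on any relation between $\mu$ and $\tau$ (the paper's termwise comparison uses $|\mu| \leq \tau$, which holds in the applications but is not part of the lemma's hypotheses). The paper's approach, on the other hand, is slightly more elementary in that it avoids introducing an auxiliary independent copy.
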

\begin{proof}
Let $\mu = \E V$. Since the expectation of a random variable is the constant minimizing the $L_2$-distance to that random variable,
\begin{equation*}
\text{Var}\left( \left[ V \right]_{\tau}\right) \leq \E  \left( \left[ V  \right]_{\tau} - \mu \right)^2 .
\end{equation*}
The latter expectation can be written as
\begin{align*}
\E \mathbbm{1}_{V \in [-\tau,\tau]} \left( V  - \mu \right)^2 + \E \mathbbm{1}_{V > \tau} \left( \tau - \mu \right)^2 + \E \mathbbm{1}_{V < -\tau} \left( - \tau  - \mu \right)^2.
\end{align*}
Assuming $\mu \geq 0$, $V < -\tau$ implies that $|- \tau  - \mu| \leq |- V  - \mu|$. Since $\mu \leq \tau$, $V > \tau$ implies that $|\tau - \mu| \leq |V  - \mu|$. Consequently, the above display is bounded from above by
\begin{align*}
 \E \mathbbm{1}_{V \in [-\tau,\tau]} \left( V  - \mu \right)^2 + \E \mathbbm{1}_{V > \tau} \left( V - \mu \right)^2 + \E \mathbbm{1}_{V < -\tau} \left( - V  - \mu \right)^2 = \E  \left(  V   - \mu \right)^2.
\end{align*}
The case where $\mu < 0$ follows by the same reasoning.
\end{proof}

\subsection{Lower Bound Proofs}\label{sec:lower-bound-proofs}

\subsubsection{Proof of Lemma \ref{lemma:van-trees-nonparametric}}
\begin{proof}
	Note that $\E_f\|\hat f -f\|_2^2 \geq \E\left(\sum_{k=0}^{2^L-1} (\hat f_{Lk}-f_{Lk})^2\right)$. Denote $f_L$ by the vector $\{f_{Lk}: k=0,\dots,2^L-1\}.$ 
	For getting a lower bound on the minimax risk we will use a multivariate version of the Van-Trees inequality due to \cite{gill1995applications} (Theorem 1), which bounds the average $\ell^2$ risk by
	\begin{equation*}
		\int_{f_L}\E\left(\sum_{k=0}^{2^L-1} (\hat f_{Lk}-f_{Lk})^2\right) \lambda(f_L) df_L \geq \frac{(2^L)^2}{\int\mathrm{Tr}(I_{T^{(1)},\ldots,T^{(m)}}(f_L))\pi(f_L)df_L + J(\pi)},
	\end{equation*}
	where $I_{T^{(1)},\ldots,T^{(m)}}(f_L)$ is the Fisher information asscociated with $\bm T =\left(T^{(1)},\ldots,T^{(m)}\right)$ and  $\pi(f_L) = \prod_{k=0}^{2^L-1} \pi_k(f_{Lk})$ is a prior for the parameter $f_L$ and $J(\pi)$ is the Fisher information associated with the prior $\pi$:
	\[
	J(\pi) = \sum_{k=0}^{2^L-1} \int \frac{\pi'_k(f_{Lk})^2}{\pi_k(f_{Lk})} df_{Lk}.
	\]
	By taking as prior $\pi_k$ a rescaled version of the density $t \mapsto \cos^2(\pi t/2) \mathbbm{1}\{|t| \leq 1\}$ such that it is supported on $[-2^{-L(\alpha+1/2)}R,2^{-L(\alpha+1/2)}R]$, we obtain that prior is supported on $\cB^{\alpha,R}_{p,q}$. By a straightforward calculation for the Fisher information associated with the prior, we have that the minimax risk is lower bounded by
	\[
	\sup_{f \in \cB^{\alpha,R}_{p,q}} \E_f\|\hat f - f\|_2^2 \geq \frac{(2^L)^2}{\sup_{f_L}\mathrm{Tr}(I_{T^{(1)},\ldots,T^{(m)}}(f_L)) + \frac{2^L\pi^2}{(2^{-L(\alpha+1/2)}R)^2}}.
	\]
	Next using the fact that $T^{(1)},\dots, T^{(m)}$ are independent trancripts we have that
	$I_{T^{(1)},\ldots,T^{(m)}}(f_L)  = \sum_{j=1}^m I_{T^{(j)}}(f_L)$. It is a straighforward calculation to show that $ I_{T^{(j)}}(f_L) = \E[\bm C_f(T^{(j)})]$ which proves the lemma.
\end{proof}
\subsubsection{Proof of Lemma \ref{lem:trace-processing}}
\label{sec:proof-of-trace-processing}
\begin{proof}
	For random variables $V,W$, it holds that
	\begin{equation*}
		\E W \E[ W | V] = \E \E[ W | V] \E[ W | V],
	\end{equation*}
	since $W - \E[ W | V]$ is orthogonal to $\E[ W | V]$. Using this fact and linearity of the inner product and conditional expectation,
	\begin{equation}\label{eq:trace_to_innerproduct_projection}
		\E \left[\text{Tr}(C_f(T^{(j)}))\right] = \E\|\E[S_f(Z^{(j)}) \mid T^{(j)}]\|_2^2 = \underset{i=1}{\overset{n_j}{\sum}}  \E G^{(j)}_i.
	\end{equation}
	Define also
	\begin{equation*}
		\breve{G}^{(j)}_i = \langle \E[ S_f(Z^{(j)})| T^{(j)} ], S_f(\breve Z^{(j)}_i) \rangle,
	\end{equation*}
	where $\breve{Z}_i^{(j)}$ is and independent copy of ${Z}_i^{(j)}$ and note that $\E \breve{G}^{(j)}_i = 0$. Write $(G_i^{(j)})^+ := 0 \vee G^{(j)}_i$ and $(G_i^{(j)})^- = - 0 \wedge G^{(j)}_i$. We have that $\E (G^{(j)}_i)^+$ equals
	\begin{align*}
		   \int_0^\infty \P \left( (G_i^{(j)})^+ \geq t \right) dt = \int_0^M \P \left((G_i^{(j)})^+ \geq t \right) + \int_M^\infty \P \left( (G_i^{(j)})^+ \geq t \right) &\leq \\
		 e^{\epsilon} \int_0^M \P \left( (\breve{G}_i^{(j)})^+ \geq t \right) dt +M \delta + \int_M^\infty \P \left( (G_i^{(j)})^+ \geq t \right) &\leq \\ 
		  \int_0^\infty \P \left( (\breve{G}_i^{(j)})^+ \geq t \right)dt + C' \epsilon \int_0^\infty \P \left( (\breve{G}_i^{(j)})^+ \geq t \right)dt +M \delta + \int_M^\infty \P \left( (G_i^{(j)})^+ \geq t \right),
	\end{align*}
	where in the last inequality the constant $C'$ depends only on $C_\varepsilon$. In the same way, we obtain that $\E (G_i^{(j)})^- $ is lower bounded by
	\begin{align*}
		  e^{-\epsilon} \int_0^M \P \left( (\breve{G}^{(j)}_i)^- \geq t \right) dt - M \delta + \int_M^\infty \P \left( (G_i^{(j)})^- \geq t \right) &\geq \\ 
		  \int_0^M \P \left( (\breve{G}^{(j)}_i)^- \geq t \right)dt - C' \epsilon \int_0^\infty \P \left( (\breve{G}^{(j)}_i)^- \geq t \right)dt - M \delta + \int_M^\infty \P \left( (G_i^{(j)})^- \geq t \right) dt &= \\
		 \int_0^\infty \P \left( (\breve{G}^{(j)}_i)^- \geq t \right)dt - C' \epsilon \int_0^\infty \P \left( (\breve{G}^{(j)}_i)^- \geq t \right)dt - M \delta - \int_M^\infty \P \left( (\breve{G}^{(j)}_i)^- \geq t \right)dt.
	\end{align*}
	Putting these together with $G_i = (G_i^{(j)})^+ -( G_i^{(j)})^-$, we get
	\begin{align*}
		\E G^{(j)}_i &\leq \int_0^\infty \P \left((\breve{G}^{(j)}_i)^+ \geq t \right)dt + \int_0^\infty \P \left( (\breve{G}^{(j)}_i)^- \geq t \right)dt + 2 C' \epsilon \int_0^\infty \P \left( |\breve{G}^{(j)}_i| \geq t \right)dt \\ &+ 2 M \delta + \int_M^\infty \P \left( (G_i^{(j)})^+ \geq t \right) dt - \int_M^\infty \P \left( (\breve{G}^{(j)}_i)^- \geq t \right)dt\\
		&= \E \breve{G}^{(j)}_i + 2 C' \varepsilon_j \E | \breve{G}^{(j)}_i | + 2 M \delta_j + \int_M^\infty \P \left( (G^{(j)}_i)^+ \geq t \right) dt + \int_M^\infty \P \left( (\breve{G}^{(j)}_i)^- \geq t \right)dt.
	\end{align*}
	The first term in the last display equals zero. For the second term, observe that 
	
	\begin{align*}
		\E | \breve{G}^{(j)}_i | &\leq \sqrt{\E\left[\E \langle \E(S_f(Z^{(j)}_i) |T^{(j)}), S_f(\breve Z^{(j)}_i)\rangle^2\right]} \\
		&\leq \sqrt{\E \left[\E(S_f(Z^{(j)}_i) |T^{(j)})^T \mathrm{Var}(S_f(\breve Z^{(j)}_i))\E(S_f(Z^{(j)}_i) |T^{(j)}) \right]} \\
		&\leq  \sqrt{\E \|\E(S_f(Z^{(j)}_i) |T^{(j)})\|_2^2  } \sqrt{\lambda_{\max}(\bm C_f)}\\
		&\leq \sqrt{\E\mathrm{Tr}(\bm C_f(T^{(j)}))} \sqrt{\lambda_{\max}(\bm C_f)}.
	\end{align*}
	A concentration argument bounding the tails of $(G^{(j)}_i)^+$ and $\breve{G}^{(j)}_i$, i.e. Lemma \ref{lemma:bounding-tail-term-trace-attack} with $M = (2^L\sqrt{n_j})  \log(1/\delta_j)$, in combination with the identity \eqref{eq:trace_to_innerproduct_projection}, yields the desired result.
\end{proof}
\subsubsection{Proof of Theorem \ref{thm:minimax-lower-bound}}
\begin{proof}
The crux of the lower bound argument involves showing that 
\begin{equation}\label{eq:bound-on-expected-trace-of-conditional-score-function}
	\mathbb{E} \left[\mathrm{Tr}(\bm C_f(T^{(j)}))\right] \lesssim n_j^2\varepsilon_j^2,
\end{equation}
which we prove in Lemma \ref{lemma:proof-of-bound-on-expected-trace-of-conditional-score-function} in Section \ref{sec:proof-of-bound-on-expected-trace-of-conditional-score-function}.

It is easy to observe that $\bm C^{(j)}_{f,i} = I_{2^L}$, i.e. a $2^L \times 2^L$ identity matrix. 
Using the properties of the conditional expectations we have that for random vector $U$ and $V$
\[
\mathrm{Cov}\left[\E(U |V)\right] \preceq \mathrm{Cov}(U)
\]
where $\preceq$ denotes the positive definite inequality. This implies that $\E \bm C_f(T^{(j)}) \preceq \bm C^{(j)}_f$ using the fact that $\E\bm S_f(Z^{(j)}) = 0$. Hence we have that $\mathrm{Tr}\left[\mathbb{E} (\bm C_f(T^{(j)}))\right] \leq \mathrm{Tr}(\bm C^{(j)}_f) = n_j 2^L$.

Combining these two upper bounds on the expected trace of $\bm C_f(T^{(j)})$, we have, by Lemma \ref{lemma:van-trees-nonparametric}, that
\[
\sup_{f \in \mathcal{B}^{\alpha,R}_{p,q}} \mathbb{E}_{f} \|\hat f - f\|_2^2 \gtrsim \frac{2^{2L}}{\sum_{j=1}^m n_j^2\varepsilon^2 \wedge n_j 2^L + \pi^2 2^{L(2\alpha+2)}}
\]
Now employing our choice of $L$ we have the desired minimax lower bound.
\end{proof}

\subsubsection{Lemmas \ref{lemma:proof-of-bound-on-expected-trace-of-conditional-score-function} and \ref{lemma:bounding-tail-term-trace-attack}}
\label{sec:proof-of-bound-on-expected-trace-of-conditional-score-function}
\begin{lemma}\label{lemma:proof-of-bound-on-expected-trace-of-conditional-score-function}
Let $\bm C_f(T^{(j)}) = \mathbb{E}\left[\bm S_f(Z^{(j)})\mid T^{(j)}\right]\mathbb{E}\left[\bm S_f(Z^{(j)})\mid T^{(j)}\right]^T$ and $\bm S_f(Z^{(j)})$ is defined as in~\eqref{eq:influence_function}. Assuming $\delta_j$ is such that $\delta_j \log(1/\delta_j) \lesssim n_j^{1/2}\varepsilon_j^22^{-L}$, it holds that
	\[
	\mathbb{E} \left[\mathrm{Tr}(\bm C_f(T^{(j)}))\right] \lesssim n_j^2\varepsilon_j^2.
	\]
\end{lemma}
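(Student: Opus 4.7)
The plan is to combine Lemma~\ref{lem:trace-processing} with a direct computation of $\lambda_{\max}(\bm C_{f,i}^{(j)})$, after which the bound reduces to solving a quadratic inequality in $\sqrt{\mathbb{E}[\mathrm{Tr}(\bm C_f(T^{(j)}))]}$.

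First I would identify $\bm C_{f,i}^{(j)}$ explicitly. Since the lower bound is established over the finite-dimensional sub-model $\mathcal{B}^{\alpha,R,L}_{p,q}$, any $f$ in this class satisfies $Y_i^{(j)} - \sum_{k=0}^{2^L-1} f_{Lk}\psi_{Lk}(X_i^{(j)}) = \xi_i^{(j)}$, so the score simplifies to $\bm S_f(Z_i^{(j)}) = \xi_i^{(j)}\bm\psi(X_i^{(j)})$ (recall $\sigma = 1$). Independence of $\xi_i^{(j)}$ from $X_i^{(j)}$ together with the orthonormality of the wavelet basis then yields $\bm C_{f,i}^{(j)} = \mathbb{E}[(\xi_i^{(j)})^2]\,\mathbb{E}[\bm\psi(X_i^{(j)})\bm\psi(X_i^{(j)})^T] = I_{2^L}$, so in particular $\lambda_{\max}(\bm C_{f,i}^{(j)}) = 1$.

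With this bound in hand I would substitute into Lemma~\ref{lem:trace-processing} and abbreviate $x := \sqrt{\mathbb{E}[\mathrm{Tr}(\bm C_f(T^{(j)}))]}$. The inequality becomes the quadratic
\begin{equation*}
x^2 \leq C n_j\varepsilon_j\, x + C\delta_j\bigl(2^L n_j^{1/2}\log(1/\delta_j) + n_j\bigr),
\end{equation*}
whose solution via the quadratic formula followed by $\sqrt{u+v} \leq \sqrt u + \sqrt v$ gives $x \lesssim n_j\varepsilon_j + \sqrt{\delta_j\bigl(2^L n_j^{1/2}\log(1/\delta_j) + n_j\bigr)}$.

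Finally I would verify that the residual term is absorbed by $n_j\varepsilon_j$ under the hypothesis $\delta_j\log(1/\delta_j) \lesssim n_j^{1/2}\varepsilon_j^2\, 2^{-L}$. The first summand inside the square root yields $\delta_j\, 2^L n_j^{1/2}\log(1/\delta_j) \lesssim n_j\varepsilon_j^2$ directly; for the second, the hypothesis implies $\delta_j \lesssim n_j^{1/2}\varepsilon_j^2 \leq n_j\varepsilon_j^2$ (using $n_j \geq 1$ and $2^{-L} \leq 1$), hence $\delta_j n_j \lesssim n_j^2\varepsilon_j^2$. Squaring the bound on $x$ then gives $\mathbb{E}[\mathrm{Tr}(\bm C_f(T^{(j)}))] \lesssim n_j^2\varepsilon_j^2$. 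The substantive work sits entirely in Lemma~\ref{lem:trace-processing}; given it, the present lemma is a one-step algebraic bookkeeping argument, and I do not anticipate a serious obstacle.
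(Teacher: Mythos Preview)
Your proposal is correct and matches the paper's approach: both invoke Lemma~\ref{lem:trace-processing}, use the identification $\bm C_{f,i}^{(j)}=I_{2^L}$ (hence $\lambda_{\max}=1$), and then reduce to an elementary inequality in $x=\sqrt{\mathbb{E}[\mathrm{Tr}(\bm C_f(T^{(j)}))]}$ that is dispatched via the $\delta_j$ hypothesis. The only cosmetic difference is that the paper handles the quadratic by a case split (if $x\le n_j\varepsilon_j$ there is nothing to prove; otherwise divide through by $x$), whereas you solve it directly with the quadratic formula---both routes are equivalent.
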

\begin{proof}
We wish to employ Lemma \ref{lem:trace-processing} for the proof. Using Lemma \ref{lemma:bounding-tail-term-trace-attack}, we have that
\begin{align*}
	\underset{i=1}{\overset{n}{\sum}}  \E G^{(j)}_i &\lesssim 2 n_j \varepsilon_j\sqrt{\text{Tr}(\E(\bm C_f(T^{(j)})))} + \delta_j (2^Ln_j^{1/2}) \log(1/\delta_j) + n_j \delta_j.
\end{align*}
If $\sqrt{\text{Tr}(\E(\bm C_f(T^{(j)})))} \leq n_j \varepsilon_j$, then~\eqref{eq:bound-on-expected-trace-of-conditional-score-function} holds (there is nothing to prove). So assume instead that $\sqrt{\text{Tr}(\E(\bm C_f(T^{(j)})))} \geq n_j \varepsilon_j$. Combining the above display with \eqref{eq:trace_to_innerproduct_projection}, we get
\begin{align*}
	\sqrt{\text{Tr}(\E(\bm C_f(T^{(j)})))}  &\lesssim {2 n_j \varepsilon_j} +  \delta_j \frac{2^L  n_j^{1/2}}{n_j\varepsilon_j} \log(1/\delta_j) +  \frac{1}{\varepsilon_j}  \delta_j.
\end{align*}
The assumption of small enough $\delta_j$ ($\delta_j \log(1/\delta_j)\lesssim n_j^{1/2}\varepsilon_j^22^{-L}$) implies that the last two terms are $o(n_j\varepsilon_j)$ which yields the result.
\end{proof}
\begin{lemma}\label{lemma:bounding-tail-term-trace-attack}
	For $M \asymp (2^L\sqrt{n_j})  \log(1/\delta_j)$ we have that
	\[
	2 M \delta_j + \int_M^\infty \P \left( (G^{(j)}_i)^+ \geq t \right) dt + \int_M^\infty \P \left( (\breve{G}^{(j)}_i)^- \geq t \right)dt \lesssim  \delta_j 2^L  \sqrt{n_j}\log(1/\delta_j) +   \delta_j.
	\]
	where $(G^{(j)}_i)^+$ and $(\breve{G}^{(j)}_i)^-$ is as defined in the proof of Lemma \ref{lem:trace-processing} proved in Section \ref{sec:proof-of-trace-processing}.
\end{lemma}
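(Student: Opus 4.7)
My plan is to bound the two tail integrals by a careful Cauchy--Schwarz decomposition of $G_i^{(j)}$ and $\breve G_i^{(j)}$, combined with tail bounds on the resulting factors: the conditional expectation $U := \mathbb{E}[\bm S_f(Z^{(j)})\mid T^{(j)}]$ and the Gaussian noise $\xi$. The starting observation is that, since the lower bound argument restricts to $f\in\mathcal{B}^{\alpha,R,L}_{p,q}$, the score collapses to the clean form $\bm S_f(Z_i^{(j)}) = \sigma^{-1}\xi_i^{(j)}\bm\psi(X_i^{(j)})$, whose norm is bounded by $C_\psi 2^{L/2}|\xi_i^{(j)}|/\sigma$ thanks to the wavelet support argument already used in the proof of Lemma~\ref{lemma:l2-sensitivity-wavelets} (only $c_A$ of the $\psi_{Lk}$'s overlap at any point). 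Coordinate-wise Jensen gives $\|U\|_2^{2p}\le \mathbb{E}[\|\bm S_f(Z^{(j)})\|_2^{2p}\mid T^{(j)}]$, and in particular the second-moment bound $\mathbb{E}\|U\|_2^2 \le n_j\,\mathrm{Tr}(\bm C_{f,i}^{(j)}) = n_j 2^L$.

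For $\breve G_i^{(j)}$, independence of $\breve Z_i^{(j)}$ from $T^{(j)}$ (and hence from $U$) gives the cleaner fact that $\breve G_i^{(j)}\mid U$ is Gaussian with conditional variance at most $C_\psi^2 \|U\|_2^2\cdot 2^L/\sigma^2$, so a standard Gaussian tail followed by integration and expectation over $U$ will suffice. For $G_i^{(j)}$, the difficulty is that $U$ depends on $\xi_i^{(j)}$ through $T^{(j)}$, preventing a clean decoupling. I will instead use a truncation union bound
\[
\mathbb{P}\bigl(|G_i^{(j)}| \geq t\bigr) \leq \mathbb{P}(\|U\|_2 \geq R) + \mathbb{P}\bigl(|\xi_i^{(j)}| \geq t\sigma/(C_\psi R\,2^{L/2})\bigr),
\]
where the second summand is controlled by the Gaussian tail. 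For the first summand, I will upgrade the moment bound to a sub-exponential tail: by Jensen, $\mathbb{E}\|U\|_2^{2p} \le \mathbb{E}\|\bm S_f(Z^{(j)})\|_2^{2p}$, and since $\|\bm S_f(Z^{(j)})\|_2^2 = \xi^T A^T A\,\xi/\sigma^2$ is a quadratic form of Gaussians with mean $\sim n_j 2^L$, the Hanson--Wright moment bound yields $\mathbb{E}\|\bm S_f(Z^{(j)})\|_2^{2p} \le (c\,n_j(2^L + p))^p$; optimizing $p$ gives $\mathbb{P}(\|U\|_2^2 \ge r) \lesssim \exp(-c\,r/n_j)$ in the sub-exponential regime $r \gtrsim n_j 2^L$.

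Balancing the two tail contributions by choosing $R^2 \asymp \sigma\sqrt{n_j/2^L}\cdot t$ equates both bounds and produces a combined sub-exponential-type tail $\mathbb{P}(|G_i^{(j)}|\ge t) \lesssim \exp(-c\,t/(2^{L/2}\sqrt{n_j}))$. Integrating from $M \asymp 2^L\sqrt{n_j}\log(1/\delta_j)$ then yields
\[
\int_M^\infty \mathbb{P}\bigl((G_i^{(j)})^+ \geq t\bigr)\,dt \;\lesssim\; 2^{L/2}\sqrt{n_j}\cdot \exp\!\bigl(-c\,M/(2^{L/2}\sqrt{n_j})\bigr) \;=\; 2^{L/2}\sqrt{n_j}\cdot \delta_j^{c\,C_0\, 2^{L/2}},
\]
and choosing the constant $C_0$ in $M$ so that $cC_0\,2^{L/2} \ge 1$ absorbs the polynomial prefactor into a slight power loss in $\delta_j$, giving the bound $O(\delta_j)$ (and in fact $O(\delta_j\cdot 2^L\sqrt{n_j}\log(1/\delta_j))$ as allowed by the stated conclusion). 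The identical argument, with the additional simplification afforded by the conditional-Gaussian structure, handles $\int_M^\infty \mathbb{P}((\breve G_i^{(j)})^- \geq t)\,dt$. Adding these two contributions to $2M\delta_j \asymp \delta_j\cdot 2^L\sqrt{n_j}\log(1/\delta_j)$ recovers the claimed inequality.

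The principal technical obstacle is the correlation between $U$ and $\xi_i^{(j)}$ in the bound on $G_i^{(j)}$, which is why the direct Gaussian-tail argument available for $\breve G_i^{(j)}$ cannot be reused. Overcoming this requires the $t$-dependent truncation $R=R(t)$ above, together with transferring a sub-exponential tail from $\|\bm S_f(Z^{(j)})\|_2^2$ to $\|U\|_2^2$ by the coordinate-wise Jensen moment inequality; the balance that makes both halves of the union bound decay like $\exp(-c\,t/(2^{L/2}\sqrt{n_j}))$ is what eventually pins down the threshold $M \asymp 2^L\sqrt{n_j}\log(1/\delta_j)$.
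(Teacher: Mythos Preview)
Your approach is correct in outline and genuinely different from the paper's, but one intermediate claim is overclaimed. The paper does not decompose via Cauchy--Schwarz at all: it applies Jensen to replace $G_i^{(j)}=\langle \mathbb{E}[\bm S_f(Z^{(j)})\mid T^{(j)}],\bm S_f(Z_i^{(j)})\rangle$ by the unconditioned inner product $\langle \bm S_f(Z^{(j)}),\bm S_f(Z_i^{(j)})\rangle$ inside the moment generating function, then computes the MGF of this bilinear Gaussian form directly by conditioning on $\xi_1$ and the $U_{i'}=\langle\bm\psi(X_{i'}),\bm\psi(X_1)\rangle$, using $|U_{i'}|\le c_\psi 2^L$ together with the Gaussian and $\chi^2$ MGFs. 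This yields $\mathbb{E}e^{t|G_i^{(j)}|}\le C$ for $t\asymp 2^{-L}/\sqrt{n_j}$, hence the sub-exponential tail $\exp(-c\,u/(2^L\sqrt{n_j}))$, and the integral from $M$ follows. The same MGF computation handles $\breve G_i^{(j)}$.

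Your alternative route---factor $|G_i^{(j)}|\le \|U\|_2\cdot C_\psi 2^{L/2}|\xi_i^{(j)}|$, then union-bound on $\{\|U\|_2\ge R\}$ versus the Gaussian tail of $\xi_i^{(j)}$, and control $\|U\|_2$ through $\mathbb{E}\|U\|_2^{2p}\le \mathbb{E}\|\bm S_f(Z^{(j)})\|_2^{2p}$---also works, but the specific moment bound $\mathbb{E}\|\bm S_f\|_2^{2p}\le (c\,n_j(2^L+p))^p$ you quote would require $\|A^TA\|\lesssim n_j$ almost surely, which does not follow from the available estimates (only $\|A^TA\|\le \mathrm{tr}(A^TA)\le c_\psi n_j 2^L$ is immediate). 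With the latter bound one gets $\mathbb{E}\|\bm S_f\|_2^{2p}\le (C p\,n_j 2^L)^p$ and hence $\mathbb{P}(\|U\|_2^2\ge r)\lesssim \exp(-cr/(n_j 2^L))$; re-balancing the union bound then gives $R^2=t\sqrt{n_j}$ and the tail $\exp(-c\,t/(2^L\sqrt{n_j}))$ rather than your stated $\exp(-c\,t/(2^{L/2}\sqrt{n_j}))$. This is exactly the paper's rate, and with $M=C_0\,2^L\sqrt{n_j}\log(1/\delta_j)$ the integral is $\lesssim 2^L\sqrt{n_j}\,\delta_j^{cC_0}\lesssim \delta_j 2^L\sqrt{n_j}$ for $C_0$ large enough, so the lemma's conclusion is unaffected. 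In short: your argument is salvageable with the coarser Hanson--Wright input, but the paper's direct MGF computation is shorter and sidesteps both the union-bound balancing and any question about the operator norm of the random Gram matrix.
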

\begin{proof}
	 We first analyze the term concerning $G^{(j)}_i =\langle \mathbb{E}[\bm S_f(Z^{(j)})|T^{(j)}],\bm S_f(Z_i^{(j)}) \rangle$. Using Jensen's and law of total probability we have that for $i=1,\dots,n$,
	\begin{align*}
		\E e^{t |G^{(j)}_i|} &= \E e^{t |\langle \mathbb{E}[\bm S_f(Z^{(j)})|T^{(j)}],\bm S_f(Z_i^{(j)}) \rangle|} \\ &= \int \int e^{t |\langle \mathbb{E}[\bm S_f(Z^{(j)})|T^{(j)} = u],\bm S_f(Z_i^{(j)}) \rangle|} dP^{T^{(j)}|Z_i^{(j)}=z}(u) dP^{Z^{(j)}_i}(z) \\
		&\leq \int \int \E[ e^{t |\langle \bm S_f(Z^{(j)}),\bm S_f(Z_i^{(j)}) \rangle|} | T^{(j)} = u ] dP^{T^{(j)}|Z^{(j)}_i=z}(u) dP^{Z^{(j)}_i}(z) \\
		&=\E e^{t|\langle \bm S_f(Z^{(j)}),\bm S_f(Z_i^{(j)}) \rangle|}.
	\end{align*}
	Without loss of generality, consider the index $i$ to be equal to $1$. Note that we have that $\langle \bm S_f(Z^{(j)}),\bm S_f(Z_i^{(j)}) \rangle \overset{d}{=} \sum_{i=1}^{n_j} \xi_i\xi_1\langle \bm \psi(X^{(j)}_i), \bm\psi(X^{(j)}_1)\rangle$ where $\xi_i \overset{i.i.d}{\sim} N(0,1)$ with $i\in[n_j]$. Also set $U_i := \langle \bm \psi(X^{(j)}_i), \bm\psi(X^{(j)}_1)\rangle$. We would need a bound on $U_i$'s for our subsequent analysis which we show now using
	\[
	\|\psi(X_i^{(j)})\|_2^2 \leq \sum_{k=0}^{2^L-1} \psi^2_{Lk}(X_i^{(j)}) \leq c_\psi 2^L,
	\]
	for some constant $c_\psi$ where we have used the bounded support property of $\psi_{Lk}$. We set $t = \frac{1}{4c_\psi} \frac{2^{-L}}{\sqrt n_j}$.
	Using the tower property of conditional expectation we have that
	\begin{align*}
		\E e^{t|\langle \bm S_f(Z^{(j)}),\bm S_f(Z_i^{(j)}) \rangle|} &= \E e^{t\left|\sum_{i=1}^{n_j}\xi_i\xi_1U_i\right|}\\	
		&= \E \E \left[ e^{t\left|\sum_{i=1}^{n_j}\xi_i\xi_1U_i\right|} \mid \{U_i\}_{i=1}^{n_j},\xi_1\right]\\
		&\leq \E \E \left[ e^{t|\xi_1^2U_1| + t|\xi_1|\left|\sum_{i=2}^{n_j}\xi_iU_i\right|} \mid \{U_i\}_{i=1}^{n_j},\xi_1\right]\\
		&\leq \E \left[e^{t|\xi_1^2U_1|}\E \left[ e^{ t|\xi_1||\sum_{i=2}^{n_j}\xi_iU_i|} \mid \{U_i\}_{i=1}^{n_j},\xi_1\right]\right].
	\end{align*}
	Next we use the fact that $\sum_{i=2}^{n_j} \xi_iU_i \mid \{U_i\}_{i=2}^{n_j} \sim N (0, \sum_{i=2}^{n_j} U^2_i)$ and the m.g.f of gaussian distribution we have 
	\[
	\E \left[ e^{ t|\xi_1||\sum_{i=2}^{n_j}\xi_iU_i|} \mid \{U_i\}_{i=1}^{n_j},\xi_1\right] \leq 2e^{\frac{1}{2}t^2\sum_{i=2}^{n_j} U_i^2 \xi_1^2},
	\]
	using this and the fact that $|U_i| \lesssim 2^L $ we have 
	\begin{align*}
		\E e^{t|\langle \bm S_f(Z^{(j)}),\bm S_f(Z_i^{(j)}) \rangle|} 
		&\leq 2\E \left[e^{\left(t2^{L+1} + \frac{4(n_j-1)2^{2L}}{2}t^2\right) \xi_1^2}\right]\\
		&\leq 2e^{(2t2^L + 2n_j2^{2L}t^2)}\E \left[e^{(2t2^L + 2n_j2^{2L}t^2) (\xi_1^2-1)}\right]\\
		&\leq 2e^{(2t2^L + 2n_j2^{2L}t^2)} e^{2(2t2^L + 2n_j2^{2L}t^2)^2} \quad (\because (2t2^L + 2n_j2^{2L}t^2) \leq 1/4)\\
		&\leq  2e^{3(2t2^L + 2n_j2^{2L}t^2)} \quad (\because (2t2^L + 2n_j2^{2L}t^2) \leq 1/4)\\
		&\leq 2 e^{12},
	\end{align*}
	where we have used the fact that $\xi_1^2$ is chi-square distributed with $1$ degree of freedom. It follows that
	\begin{align*}
		\P(|G^{(j)}_i| > u) &\leq \P(e^{t|G^{(j)}_i| }\geq e^{tu})\\
		&\leq e^{-tu}\E e^{t|G^{(j)}_i|}\\
		&\leq 2e^6 e^{-u\frac{2^{-L}}{4\sqrt n_j} }.
	\end{align*}
	This means that for $M \gtrsim (2^L\sqrt{n_j})  \log(1/\delta_j)$, we obtain
	\begin{align*}
		\int_M^\infty \P \left( (G^{(j)}_i)^+ \geq t \right) dt \leq \int_M^\infty \P \left( |G^{(j)}_i| \geq t \right) dt \lesssim 2e^{- \log(1/\delta_j)}.
	\end{align*}
	
	Next we consider $\int_M^\infty \P \left( |\breve G^{(j)}_i| \geq t \right) dt$, which we bound with a similar argument as before. Using Jensen's inequality and the law of total probability, we obtain that
	\[
	\E e^{t |\breve G^{(j)}_i|} \leq \E e^{t|\langle \bm S_f(Z^{(j)}),\bm S_f(\breve Z_i^{(j)}) \rangle|}.
	\]
	Note that we have that $\langle \bm S_f(Z^{(j)}),\bm S_f(\breve Z_i^{(j)}) \rangle \overset{d}{=} \sum_{i=1}^{n_j} \xi_i\xi\langle \bm \psi(X^{(j)}_i), \bm\psi(\breve X^{(j)}_1)\rangle$ where $\xi, \xi_i \overset{i.i.d}{\sim} N(0,1)$ with $i\in[n_j]$.
	Define $U_i' = \langle \bm \psi(X^{(j)}_i), \bm\psi(\breve X^{(j)}_1)\rangle$, where by the earlier argument it follows that $|U_i'| \leq c_\psi 2^L$.
	We have
	\begin{align*}
		\E e^{t|\langle \bm S_f(Z^{(j)}),\bm S_f(\breve Z_i^{(j)}) \rangle|} &= \E e^{t\left|\sum_{i=1}^{n_j}\xi_i\xi U'_i\right|}\\	
		&= \E \E \left[ e^{t|\xi|\left|\sum_{i=1}^{n_j}\xi_iU'_i\right|} \mid \{U_i\}_{i=1}^{n_j},\xi_1\right]\\
		&\leq \E \left[2e^{\frac{1}{2}t^2\sum_{i=1}^{n_j} (U'_i)^2 \xi^2}\right]\\
		&\leq 2\E \left[e^{\frac{1}{32} \xi^2}\right] \leq 2 e.
	\end{align*}
	By the same reasoning, it follows that for $M \gtrsim (2^L\sqrt{n_j})  \log(1/\delta_j)$,
	\[
	\int_M^\infty \P \left( |\breve G^{(j)}_i| \geq t \right) dt \lesssim 2 e^{-\log (1/\delta_j)}.
	\]
	Putting everything together we obtain the desired result.
\end{proof}
\section{Proofs for the point wise estimation results}
In this section we outline the proofs pertaiining to the pointwise risk problem. This section is further divided into three subsections: Section \ref{sec:sensitivity-bound-for-pointwise-estimator} contains the proof of the $\ell_1$-sensitivity of the pointwise estimator and estabishes the order of magnitude of noise that needs to be added to ensure the statistic satisfies DP guarantees. Section \ref{sec:pointwise-risk-upper-bounds} outlines the proof of the results which quantify the minimax risk of our privatised pointwise estimator, and lastly Section \ref{sec:pointwise_lb_proofs} establishes the optimality of our proposed estimator by showing matching lower bounds on the minimax risk. 
\subsection{Proof of Lemma \ref{lemma:l1-sensitivity-wavelets-at-a-point}}
\label{sec:sensitivity-bound-for-pointwise-estimator}
\begin{proof}
	Consider two neighboring data sets $Z^{(j)}$ and $\tilde{Z}^{(j)}$, meaning that at most for one $i \in [n]$ it holds that
\begin{equation*}
Z^{(j)}_i = (Y_i^{(j)},X_i^{(j)}) \neq (\tilde Y_i^{(j)},\tilde X_i^{(j)}) = \tilde Z^{(j)}_i.
\end{equation*}
Define for $l \geq l_0$ the set $K_l(x_0) := \{k:\psi_{lk}(x_0) \neq 0\}$. It holds that
	\begin{align*}
		\| \hat f_{L,\tau}^{(j)}(x_0 | Z^{(j)})  -\hat f_{L,\tau}^{(j)}(x_0 | (\tilde{Z}^{(j)})) \|_1 &= \\ \left| \sum_{l =l_0}^L \underset{k=0}{\overset{2^l - 1}{\sum}} \left(\frac{1}{n_j} [Y_i^{(j)}]_\tau \psi_{lk}(X_i^{(j)}) - \frac{1}{n_j} [\tilde Y_i^{(j)}]_\tau \psi_{lk}(\tilde X_i^{(j)}) \right)\psi_{lk}(x_0)\right| &= \\
		 \left| \sum_{l =l_0}^L \sum_{k \in K_l(x_0)}\left(\frac{1}{n_j} [Y_i^{(j)}]_\tau \psi_{lk}(X_i^{(j)}) - \frac{1}{n_j} [\tilde Y_i^{(j)}]_\tau \psi_{lk}(\tilde X_i^{(j)}) \right)\psi_{lk}(x_0)\right|. &\\
		\end{align*}
Using the $L_\infty$-norm bound on $\psi_{lk}$, which states that $\|\psi_{lk}\|_\infty = 2^{l/2}\|\psi\|_\infty $ for all $l \leq L$ and $k = 0,\dots, 2^{l} -1$, the latter is bounded by 
\begin{align*}
		\left| \sum_{l =l_0}^L \sum_{k \in K_l(x_0)}\left(\frac{1}{n_j} \tau 2^{l/2+1} \|\psi\|_\infty \right)\psi_{lk}(x_0)\right|
		\leq \left| \sum_{l =l_0}^L  \sum_{k \in K_l(x_0)}\left(\frac{1}{n_j} \tau 2^{l+1} \|\psi\|^2_\infty \right)\right|
		\leq c''_\psi \frac{\tau2^L}{n_j},
	\end{align*}
	where the final step uses that $|K_l(x_0)| \leq c_A$ since the wavelet basis is compactly supported.
\end{proof}

\subsection{Proof of Theorem \ref{thm:upper-bound-pointwise-risk}}
\label{sec:pointwise-risk-upper-bounds}

\begin{proof}
	Fix $f \in \cB^{\alpha,R}_{p,q}$. We decompose the point wise risk as follows:
	\begin{align*}
		\E(\hat f(x_0) - f(x_0))^2 &= \E\left(\sum_{j=1}^m u_j T^{(j)}_{L,\tau} - f(x_0)\right)^2\\
		&\leq 2\E \left(\sum_{j=1}^m u_j \hat f^{(j)}_{L,\tau}(x_0) - f(x_0)\right)^2 + 2 \E\left(\sum_{j=1}^m u_j W^{(j)}\right)^2,
	\end{align*}
	where the first part is the non-private part and the second term accounts for the noise added due to privacy constrains.
	In Lemma \ref{lemma:proof-of-bound-on-error-due-to-privacy-pointwise-risk}, Section \ref{sec:proof-of-bound-on-error-due-to-privacy-pointwise-risk}, we bound the second term as follows:
	\begin{equation}\label{eq:bound-on-error-due-to-privacy-pointwise-risk}
		\E\left(\sum_{j=1}^m u_j W^{(j)}\right)^2 \leq  \frac{2(c''_\psi)^2 \tau^2 2^{2L}}{\sum_j n^2_j\varepsilon^2_j \wedge n_j2^L} = 2(c''_\psi)^2 \tau^2 2^{-2L\nu}.
	\end{equation}
	The first term can be decomposed as
	\begin{equation*}
		\E \left(\sum_{j=1}^m u_j \hat f^{(j)}_{L,\tau}(x_0) - f(x_0)\right)^2 \leq 2 \E \left(\sum_{j=1}^m u_j \hat f^{(j)}_{L,\tau}(x_0) - f_L(x_0)\right)^2 + 2 (f_L(x_0) - f(x_0))^2,
	\end{equation*}
	where $f_L(x_0) = \sum_{l=l_0}^L\sum_{k=0}^{2^l - 1} f_{lk} \psi_{lk}(x_0)$. Define $K_l(x_0) = \{k:\psi_{lk}(x_0) \neq 0\}$. The second term is now bounded as follows:
	\begin{align*}
		(f_L(x_0) - f(x_0))^2 &\leq \left(\sum_{l >L} \sum_{k=0}^{2^l-1} |f_{lk}| |\psi_{lk}(x_0)|\right)^2\\
		&\leq  \left(\sum_{l >L} \sum_{k\in K_l(x_0)} c_* 2^{-l\left(\alpha + \frac{1}{2}-\frac{1}{p}\right)}2^{l/2}\|\psi\|_\infty \right)^2\\
		&\leq \left(c \sum_{l >L} 2^{-l\left(\alpha - \frac{1}{p}\right)} \right)^2\\
		&\leq c_\psi 2^{-2L\nu} \quad (\text{recalling that }\nu = \alpha-1/p),
	\end{align*}
	where we have used the fact that $|f_{lk}| \leq c_* 2^{-l\left(\alpha + \frac{1}{2}-\frac{1}{p}\right)}$ for all $l,k$ and $\|\psi_{lk}\|_\infty \leq 2^{l/2} \|\psi\|_\infty$.
	Next we focus on the first term which we bound as follows:
	\begin{align*}
		\E \left(\sum_{j=1}^m u_j \hat f^{(j)}_{L,\tau}(x_0) - f_L(x_0)\right)^2 &\leq \\
		 \E \left(\sum_{j=1}^mu_j \sum_{l=l_0}^L \sum_{k=0}^{2^l -1} (\hat f^{(j)}_{lk;\tau} - f_{lk}) \psi_{lk}(x_0)\right)^2 &\leq \\
		 2\E\left( \sum_{j=1}^mu_j \sum_{l=l_0}^L \sum_{k=0}^{2^l -1} \left(\frac{1}{n_j} \sum_{i=1}^{n_j} f(X^{(j)}_i)\psi_{lk}(X_i^{(j)})- f_{lk}\right) \psi_{lk}(x_0)\right)^2&	\\
		+2\E\left( \sum_{j=1}^mu_j \sum_{l=l_0}^L \sum_{k=0}^{2^l -1} \left(\frac{1}{n_j} \sum_{i=1}^{n_j} \eta^{(j)}_i\psi_{lk}(X_i^{(j)})\right) \psi_{lk}(x_0)\right)^2,&
	\end{align*}
	where $\eta_i^{(j)} = [\xi_i^{(j)}]^{\tau-f(X_i^{(j)})}_{-\tau-f(X_i^{(j)})}$.
	
We upper bound the first term (see Lemma \ref{lemma:proof-of-bounding-non-private-part-1-pointwise-risk} below) as follows:
\begin{align}\label{eq:bounding-non-private-part-1-pointwise-risk}
\E\left( \sum_{j=1}^mu_j \sum_{l=l_0}^L \sum_{k=0}^{2^l -1} \left(\frac{1}{n_j} \sum_{i=1}^{n_j} f(X^{(j)}_i)\psi_{lk}(X_i^{(j)})- f_{lk}\right) \psi_{lk}(x_0)\right)^2 \leq  c_\psi R^2 2^L\sum_{j=1}^m \frac{u_j^2}{n_j}	.
\end{align}

For the remaining term, write
\begin{align*}
	\E&\left( \sum_{j=1}^mu_j \sum_{l=l_0}^L \sum_{k=0}^{2^l -1} \left(\frac{1}{n_j} \sum_{i=1}^{n_j} \eta^{(j)}_i\psi_{lk}(X_i^{(j)})\right) \psi_{lk}(x_0)\right)^2\\
	&= \E\left( \sum_{j=1}^m \sum_{l=l_0}^L \sum_{k=0}^{2^l -1}  \sum_{i=1}^{n_j} \frac{u^2_j}{n^2_j} (\eta^{(j)}_i)^2\psi^2_{lk}(X_i^{(j)}) \psi^2_{lk}(x_0)\right)\\
	&+ \E\left(  \sum_{(i,j,l,k)\neq (i',j',l',k')}  \frac{u_j}{n_j} \frac{u_{j'}}{n_{j'}}\eta^{(j)}_i \eta^{(j')}_{i'}\psi_{lk}(X_i^{(j)}) \psi_{l'k'}(X_{i'}^{(j')})\psi_{lk}(x_0)\psi_{l'k'}(x_0)\right) .
\end{align*}
Here, the first term can be written as
\begin{align*}
	\sum_{j=1}^m \sum_{l=l_0}^L \sum_{k\in K_l(x_0)}  \sum_{i=1}^{n_j} \frac{u^2_j}{n^2_j}  \psi^2_{lk}(x_0) \E(\eta^{(j)}_i)^2\psi^2_{lk}(X_i^{(j)}) &\leq c \sum_{j=1}^{m}\sum_{i=1}^{n_j} \sum_{l=l_0}^L 2^l \frac{u_j^2}{n_j^2}\\
	&\leq c_2 2^L \sum_{j=1}^m \frac{u_j^2}{n_j},
\end{align*}
for a unversival constant $c_2 > 0$, where we have used the fact that $\E(\eta^{(j)}_i)^2\psi^2_{lk}(X_i^{(j)})$ is bounded above by a constant, since
\begin{align*}
\E \underset{x \in [-C_{\alpha,R},C_{\alpha,R}]}{\sup} \left([\xi_i^{(j)}]^{\tau-x}_{-\tau-x}\right)^2 \psi^2_{lk}(X_i^{(j)}) &= \E \underset{x \in [-C_{\alpha,R},C_{\alpha,R}]}{\sup} \left([\xi_i^{(j)}]^{\tau-x}_{-\tau-x}\right))^2 \E \psi^2_{lk}(X_i^{(j)})  \\ 
&= \E \underset{x \in [-C_{\alpha,R},C_{\alpha,R}]}{\sup} \left([\xi_i^{(j)}]^{\tau-x}_{-\tau-x}\right)^2 = O(1),
\end{align*}
by similar arguments as in Lemma \ref{lemma:proof-of-bounding-non-private-part-3}. We show in Lemma \ref{lemma:proof-of-bounding-non-private-part-2-pointwise-risk} below that the second term can be bounded above by
\begin{align}\label{eq:bounding-non-private-part-2-pointwise-risk}
	\sum_{(i,j,l,k)\neq (i',j',l',k')} \frac{u_j}{n_j}\frac{u_{j'}}{n_{j'}}\psi_{lk}(x_0)\psi_{l'k'}(x_0) \E \eta^{(j)}_i \eta^{(j')}_{i'}\psi_{lk}(X_i^{(j)}) \psi_{l'k'}(X_{i'}^{(j')})
	\leq cR^2 2^{-2\nu L}.
\end{align}
Hence we have that 
\begin{align*}
	\E \left(\sum_{j=1}^m u_j \hat f^{(j)}_{L,\tau}(x_0) - f_L(x_0)\right)^2 &\leq c'R^2 2^L \sum_{j=1}^m \frac{u_j^2}{n_j} + c'R^22^{-2L\nu}\\
	&= c'R^2\frac{2^L}{\sum v_j} \sum_j u_j \frac{v_j }{n_j} + c'R^22^{-2L\nu}\\
	&\leq c'R^2\frac{2^L}{\sum v_j} \sum_j u_j \frac{n_j 2^L }{n_j} +c'R^22^{-2L\nu}\\
	&= c'R^2\frac{2^{2L}}{\sum_j n_j^2\varepsilon_j^2 \wedge n_j 2^L} + c'R^22^{-2L\nu} = c'R^2 2^{-2L\nu},
\end{align*}	
where we have used the fact that $v_j \leq n_j 2^L$. Using similar ideas as in the global risk proof we have that $\tau^2 = O(\log N)$. Combining the above bounds, we conclude that 
\begin{equation*}
 \E_{f} (\hat f(x_0) - f(x_0))^2 \leq  c_\psi \log (N)  2^{-2L\nu}.
\end{equation*}
\end{proof}
\subsubsection{Auxiliary Lemmas \ref{lemma:proof-of-bound-on-error-due-to-privacy-pointwise-risk}, \ref{lemma:proof-of-bounding-non-private-part-1-pointwise-risk}, \ref{lemma:proof-of-bounding-non-private-part-2-pointwise-risk}}\label{sec:proof-of-bound-on-error-due-to-privacy-pointwise-risk}

\begin{lemma}\label{lemma:proof-of-bound-on-error-due-to-privacy-pointwise-risk}
	It holds that
	\[
	\E\left(\sum_{j=1}^m u_j W^{(j)}\right)^2 \leq  \frac{2(c''_\psi)^2 \tau^2 2^{2L}}{\sum_j n^2_j\varepsilon^2_j \wedge n_j2^L},
	\]
	where $W^{(j)}\overset{i.i.d}{\sim} \mathrm{Lap}\left(0,\frac{c''_\psi \tau 2^L}{n_j \varepsilon_j}\right) $ and $u_j$ is as defined in~\eqref{eq:estimator_weights_global}.
\end{lemma}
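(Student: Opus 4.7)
The plan is to exploit the independence of the Laplace noise across servers, which reduces the second moment of the weighted sum to a weighted sum of individual variances. Since $W^{(j)} \sim \mathrm{Lap}(0, b_j)$ with scale $b_j = c''_\psi \tau 2^L / (n_j \varepsilon_j)$ has variance $2 b_j^2$, and the $W^{(j)}$'s are mutually independent with mean zero, we immediately obtain
\[
\E\left(\sum_{j=1}^m u_j W^{(j)}\right)^2 = \sum_{j=1}^m u_j^2 \cdot 2 b_j^2 = 2 (c''_\psi)^2 \tau^2 2^{2L} \sum_{j=1}^m \frac{u_j^2}{n_j^2 \varepsilon_j^2}.
\]

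The main step left is therefore to bound $\sum_j u_j^2 / (n_j^2 \varepsilon_j^2)$ by $1 / \sum_j v_j$, where $v_j = n_j^2 \varepsilon_j^2 \wedge n_j 2^L$. Since $v_j \leq n_j^2 \varepsilon_j^2$ by definition, we have $v_j / (n_j^2 \varepsilon_j^2) \leq 1$, and since $u_j = v_j / \sum_{j'} v_{j'}$, factoring out one copy of $u_j$ yields
\[
\sum_{j=1}^m \frac{u_j^2}{n_j^2 \varepsilon_j^2} = \frac{1}{\sum_{j'} v_{j'}} \sum_{j=1}^m u_j \cdot \frac{v_j}{n_j^2 \varepsilon_j^2} \leq \frac{1}{\sum_{j'} v_{j'}} \sum_{j=1}^m u_j = \frac{1}{\sum_{j} v_j}.
\]

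Combining the two displays gives the stated bound, with the denominator $\sum_j v_j = \sum_j n_j^2 \varepsilon_j^2 \wedge n_j 2^L$. I do not expect any substantive obstacle in this argument; it is essentially a bookkeeping lemma paralleling Lemma \ref{lemma:proof-of-bound-on-error-due-to-privacy} in the Gaussian case, with the only point worth care being that one uses $v_j \leq n_j^2 \varepsilon_j^2$ rather than the other branch of the minimum in order to cancel the $n_j^2 \varepsilon_j^2$ in the denominator.
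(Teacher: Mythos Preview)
Your proof is correct and follows essentially the same approach as the paper: independence of the centered Laplace variables reduces the second moment to $\sum_j u_j^2 \cdot 2b_j^2$, and then one uses $u_j = v_j/\sum_{j'} v_{j'}$ together with $v_j \leq n_j^2\varepsilon_j^2$ and $\sum_j u_j = 1$ to obtain the bound. The paper's proof is line-by-line the same computation, just without isolating the sum $\sum_j u_j^2/(n_j^2\varepsilon_j^2)$ as a separate display.
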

\begin{proof}
Using the independence of the noise generating mechanisms across the servers and the fact that the $W^{(j)}$'s are centered, we have:
\begin{align*}
	\E\left[\left(\sum_{j=1}^m  u_jW^{(j)}\right)^2\right] &=\sum_{i=1}^m u^2_j\E\left[(W^{(j)})^2 \right]\\
	&\leq 2 \sum_{j=1}^m u_j^2 \frac{(c''_\psi)^2 \tau^2 2^{2L}}{n_j^2\varepsilon_j^2}.
\end{align*}
Next we use that fact that $v_j \leq n_j^2\varepsilon_j^2$ to obtain 
\begin{align*}
	\E\left[\left(\sum_{j=1}^m  u_jW^{(j)}\right)^2\right] 
	&\leq \frac{2}{\sum_j n^2_j\varepsilon^2_j \wedge n_j2^L}\sum_{j=1}^m u_j v_j\frac{(c''_\psi)^2 \tau^2 2^{2L}}{n_j^2\varepsilon_j^2}\\
	&\leq \frac{2}{\sum_j n^2_j\varepsilon^2_j \wedge n_j2^{L}}\sum_{j=1}^m u_j(c''_\psi)^2 \tau^2 2^{2L}\\
	&\leq \frac{2(c''_\psi)^2 \tau^2 2^{2L}}{\sum_j n^2_j\varepsilon^2_j \wedge n_j2^L}.
\end{align*}
\end{proof}


\begin{lemma}\label{lemma:proof-of-bounding-non-private-part-1-pointwise-risk}
	It holds that
	\[
	\E\left( \sum_{j=1}^mu_j \sum_{l=l_0}^L \sum_{k=0}^{2^l -1} \left(\frac{1}{n_j} \sum_{i=1}^{n_j} f(X^{(j)}_i)\psi_{lk}(X_i^{(j)})- f_{lk}\right) \psi_{lk}(x_0)\right)^2 \leq  c_\psi R^2 2^L\sum_{j=1}^m \frac{u_j^2}{n_j}
	\]
	where $f_{lk} = \langle f,\psi_{lk}\rangle$  are the wavelet basis coefficents of the function $f \in B^\alpha_{p,q}(R)$  and $u_j$ is as defined in~\eqref{eq:estimator_weights_global}.
\end{lemma}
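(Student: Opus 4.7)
The plan is to exploit the mean-zero structure of the quantity inside the expectation together with the independence of samples across servers. Writing $\E f(X^{(j)}_i) \psi_{lk}(X^{(j)}_i) = f_{lk}$ (since $X^{(j)}_i \sim U[0,1]$), we recognize that the inner difference is a centered random variable, so the whole inner sum
\[
S_j := \sum_{l=l_0}^L \sum_{k=0}^{2^l -1} \Bigl(\tfrac{1}{n_j}\sum_{i=1}^{n_j} f(X^{(j)}_i)\psi_{lk}(X_i^{(j)})- f_{lk}\Bigr) \psi_{lk}(x_0)
\]
has mean zero. Interchanging the sums and introducing the reproducing-type kernel $g_L(x,y) := \sum_{l=l_0}^L \sum_{k=0}^{2^l-1} \psi_{lk}(x)\psi_{lk}(y)$, we may write
\[
S_j = \frac{1}{n_j}\sum_{i=1}^{n_j} \bigl[f(X^{(j)}_i)\, g_L(X^{(j)}_i,x_0) - f_L(x_0)\bigr],
\]
where $f_L(x_0) = \E[f(X) g_L(X,x_0)]$ is precisely the mean of the summand.

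Next, since the servers are independent and the samples within each server are i.i.d., cross terms vanish and the overall variance decomposes as $\E\bigl(\sum_j u_j S_j\bigr)^2 = \sum_j \frac{u_j^2}{n_j}\,\mathrm{Var}\bigl(f(X)g_L(X,x_0)\bigr)$. Hence the proof reduces to controlling this scalar variance, which I would bound by $C_{\alpha,R}^2 \int_0^1 g_L(x,x_0)^2\,dx$ using the uniform bound $\|f\|_\infty \leq C_{\alpha,R}$ from Lemma \ref{lem:uniform_bound_on_f}.

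Orthonormality of the wavelet basis on $[0,1]$ then collapses $\int_0^1 g_L(x,x_0)^2\,dx$ to $\sum_{l=l_0}^L \sum_{k=0}^{2^l-1} \psi_{lk}(x_0)^2$. Here I would invoke the two standard wavelet facts already used elsewhere in the paper, namely $\|\psi_{lk}\|_\infty \le 2^{l/2}\|\psi\|_\infty$ and that by compact support at most $c_A$ basis functions at each resolution level $l$ contain $x_0$ in their support. This yields $\sum_{k=0}^{2^l-1}\psi_{lk}(x_0)^2 \le c_A \|\psi\|_\infty^2\, 2^l$, and summing the geometric series in $l \le L$ gives the stated $O(2^L)$ bound.

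The main (and essentially only) delicate point is the kernel-sum estimate $\sum_{l,k}\psi_{lk}(x_0)^2 \lesssim 2^L$, which is crucial in that it does \emph{not} collect a superfluous factor $2^L$ per level; without the compact-support argument the bound would over-count by a factor of $2^L$ and yield a suboptimal $2^{2L}$ instead of $2^L$. Combining the three steps produces the constant $c_\psi R^2$ (absorbing $C_{\alpha,R}$, $c_A$ and $\|\psi\|_\infty^2$ into $c_\psi$), which is precisely the form of the claimed bound.
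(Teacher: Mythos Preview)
Your proof is correct and follows the same overall strategy as the paper: use the centering $\E f(X)\psi_{lk}(X)=f_{lk}$ together with independence across samples to reduce to a single-observation variance, then control that variance via $\|f\|_\infty\le C_{\alpha,R}$ and the compact-support bound $\sum_{k}\psi_{lk}(x_0)^2\le c_A\|\psi\|_\infty^2 2^l$. The one place your argument differs is in how the single-observation variance is written. The paper expands directly into $\sum_{l,k}\psi_{lk}^2(x_0)\,\mathrm{Var}(f(U)\psi_{lk}(U))$, which is the diagonal part and does not explicitly account for the covariances $\mathrm{Cov}(f(U)\psi_{lk}(U),f(U)\psi_{l'k'}(U))$ arising from the common design point. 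Your kernel formulation $g_L(x,x_0)=\sum_{l,k}\psi_{lk}(x)\psi_{lk}(x_0)$ sidesteps this issue cleanly: you compute $\mathrm{Var}(f(X)g_L(X,x_0))\le C_{\alpha,R}^2\int g_L(x,x_0)^2\,dx$ as a single scalar variance and then let orthonormality collapse the integral to $\sum_{l,k}\psi_{lk}(x_0)^2$, which is exactly the quantity both proofs end up bounding. The end result and the constants are the same; your route is just a bit more self-contained on that step.
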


\begin{proof}
Using the fact that we only need to consider those wavelet basis for which $\psi_{lk}(x_0)$ is non-zero we have that
	\begin{align*}
	\sum_{j=1}^m \sum_{l=l_0}^L \sum_{k=0}^{2^l -1} \sum_{i=1}^{n_j} \psi^2_{lk}(x_0)\frac{u_j^2}{n_j^2} \mathrm{Var}\left(f(U)\psi_{lk}(U)\right)
	&\leq \sum_{j=1}^m \sum_{l=l_0}^L \sum_{k\in K_l(x_0)} \sum_{i=1}^{n_j} \psi^2_{lk}(x_0)\frac{u_j^2}{n_j^2} \mathrm{Var}\left(f(U)\psi_{lk}(U)\right)
\end{align*}
Next using the fact proved in Lemma \ref{lemma:proof-of-bounding-non-private-part-1} we have that 
\[
\mathrm{Var}\left(f(U)\psi_{lk}(U)\right) \leq C_{R,\alpha}^2
\]
for a universal constant $C_{R,\alpha}>0$ and that $\psi_{lk} \leq 2^{l/2} \|\psi\|_\infty$ we have that the quantity of interest is bounded above by 
\begin{align*}
	 c^{(1)}_\psi R^2  \sum_{j=1}^m \sum_{l=l_0}^L  \sum_{i=1}^{n_j} \frac{u_j^2}{n_j^2} 2^l
			\leq c_\psi R^2 2^L\sum_{j=1}^m \frac{u_j^2}{n_j}.
\end{align*}
\end{proof}

\begin{lemma}\label{lemma:proof-of-bounding-non-private-part-2-pointwise-risk}
	It holds that
	\[
		\sum_{(i,j,l,k)\neq (i',j',l',k')} \frac{u_j}{n_j}\frac{u_{j'}}{n_{j'}}\psi_{lk}(x_0)\psi_{l'k'}(x_0) \E \eta^{(j)}_i \eta^{(j')}_{i'}\psi_{lk}(X_i^{(j)}) \psi_{l'k'}(X_{i'}^{(j')})
	\leq cR^2 2^{-2\nu L}
	\]
	where $\eta^{(j)}_i := \left[ \xi_i^{(j)}\right]^{\tau-f(X^{(j)}_i)}_{-\tau-f(X^{(j)}_i)}$, $\tau = C_{\nu,R} + \sqrt{2(2\nu + 1)L}$ and $u_j$ is as defined in~\eqref{eq:estimator_weights_global}.
\end{lemma}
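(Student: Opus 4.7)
[Proof plan for Lemma \ref{lemma:proof-of-bounding-non-private-part-2-pointwise-risk}]
The plan is to mirror the strategy used in Lemma \ref{lemma:proof-of-bounding-non-private-part-3}, but to account for the additional case-splitting needed because the sum runs over $(i,j,l,k) \neq (i',j',l',k')$ rather than $(i,j) \neq (i',j')$. I would first decompose the sum into (a) the ``same-observation'' case where $(i,j) = (i',j')$ but $(l,k) \neq (l',k')$, and (b) the ``distinct-observation'' case where $(i,j) \neq (i',j')$. The factors $\psi_{lk}(x_0)$ and $\psi_{l'k'}(x_0)$ restrict $k,k'$ to the sets $K_l(x_0), K_{l'}(x_0)$ of size at most $c_A$, and I will use the bounds $|\psi_{lk}(x_0)| \leq \|\psi\|_\infty 2^{l/2}$ and $\E|\psi_{lk}(X)| \leq c 2^{-l/2}$ (from the compact support of $\psi$ and the uniform law of $X$) throughout.

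For case (b), by independence across $(i,j)$ and the tower property, I would write
\begin{equation*}
\E \eta^{(j)}_i \eta^{(j')}_{i'}\psi_{lk}(X_i^{(j)}) \psi_{l'k'}(X_{i'}^{(j')}) = \E\!\left[\psi_{lk}(X_i^{(j)})\E[\eta_i^{(j)}\mid X_i^{(j)}]\right]\E\!\left[\psi_{l'k'}(X_{i'}^{(j')})\E[\eta_{i'}^{(j')}\mid X_{i'}^{(j')}]\right].
\end{equation*}
The bound $|\E[\eta_i^{(j)}\mid X_i^{(j)}]| \leq 4|f(X_i^{(j)})|e^{-\frac{1}{2}(\tau-|f(X_i^{(j)})|)^2} \leq 4C_{\nu,R} e^{-\frac{1}{2}(\tau-C_{\nu,R})^2}$ of \eqref{eq:truncated-gaussian-expectation-bound}, combined with Lemma \ref{lem:uniform_bound_on_f}, yields a factor $R^2 e^{-(\tau-C_{\nu,R})^2}$. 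With $\tau = C_{\nu,R}+\sqrt{2(2\nu+1)L}$ this factor equals $R^2 e^{-2(2\nu+1)L}$, which combined with $\sum_{j,j',i,i'}u_ju_{j'}/(n_jn_{j'}) \leq 1$ and the geometric sums in $l,l'$ over the nonzero $k,k'$ is easily absorbed into $CR^2 2^{-2\nu L}$.

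For case (a), the expectation $\E[(\eta_i^{(j)})^2\psi_{lk}(X_i^{(j)})\psi_{l'k'}(X_i^{(j)})]$ cannot be factored; here I would exploit the near-orthogonality of the wavelets. Writing $\sigma^2(x) := \E[(\eta_i^{(j)})^2\mid X_i^{(j)}=x]$ and $r(x) := \sigma^2 - \sigma^2(x) \geq 0$, orthonormality of $\{\psi_{lk}\}$ kills the $\sigma^2$ part, leaving
\begin{equation*}
\E[(\eta_i^{(j)})^2\psi_{lk}(X_i^{(j)})\psi_{l'k'}(X_i^{(j)})] = -\E[r(X_i^{(j)})\psi_{lk}(X_i^{(j)})\psi_{l'k'}(X_i^{(j)})].
\end{equation*}
A direct calculation analogous to \eqref{eq:truncated-gaussian-expectation-bound} shows $\|r\|_\infty \lesssim e^{-c(\tau-C_{\nu,R})^2}$ for some $c>0$, so this term again carries an $e^{-c'(2\nu+1)L}$ factor that suppresses the polynomial-in-$2^L$ contributions from the wavelet sizes.

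The main obstacle I anticipate is the bookkeeping in case (a): unlike in Lemma \ref{lemma:proof-of-bounding-non-private-part-3}, the diagonal-in-$(i,j)$ off-diagonal-in-$(l,k)$ terms do not vanish automatically, so one must carefully exploit orthonormality of the wavelet basis together with the clipping tail bound to obtain exponential decay in $L$. Once this is set up, putting the two cases together and summing (using $\sum_{j,j'}u_ju_{j'}/(n_jn_{j'})\leq 1$ and $|K_l(x_0)|\leq c_A$) gives a bound of order $R^2 L^2 2^L e^{-c(2\nu+1)L}$, which is dominated by $cR^2 2^{-2\nu L}$ for all $L$ sufficiently large, completing the proof.
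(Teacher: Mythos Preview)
Your proposal is correct and follows essentially the same two-case decomposition as the paper's proof. For case (b), the argument is identical to the paper's. For case (a), the paper instead conditions on the event $A = \{-\tau-f(x)\leq\xi_i^{(j)}\leq\tau-f(x)\ \forall x\}$ (on which $\eta_i^{(j)}=\xi_i^{(j)}$, so independence of $\xi$ and $X$ plus orthogonality kill the term) and then bounds the contribution on $A^c$ by Cauchy--Schwarz and a Gaussian tail estimate; your route via the residual $r(x)=\sigma^2-\sigma^2(x)$ is a slightly cleaner repackaging of the same idea---orthogonality handles the bulk and a clipping tail bound handles what remains---and yields the same final estimate.
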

\begin{proof}
By the same argument as in the proof of Lemma \ref{lemma:proof-of-bounding-non-private-part-3}, we have that
\begin{align*}
	 \E \eta^{(j)}_i \eta^{(j')}_{i'}\psi_{lk}(X_i^{(j)}) \psi_{l'k'}(X_{i'}^{(j')}) \leq 2^{l/2}2^{l'/2} \|\psi\|_\infty^2 R^2 e^{-(\tau-C_{\nu,R})^2} \leq 2^L \|\psi\|_\infty^2 R^2 e^{-(\tau-C_{\nu,R})^2},
\end{align*}
whenever $(i,j) \neq (i',j')$. Assume now that $(i,j) = (i',j')$, but $(l,k) \neq (l',k')$.

Define 
\begin{equation*}
	A = \left\{ - \tau - f(x) \leq \xi_i^{(j)} \leq \tau - f(x) \; \forall x \in [0,1]\right\}.
\end{equation*}
We have 
\begin{align*}
	 \E \mathbbm{1}_A (\eta^{(j)}_i)^2 \psi_{lk}(X_i^{(j)}) \psi_{l'k'}(X_{i'}^{(j')}) &= \E^\xi \E^{X} \mathbbm{1}_A (\xi^{(j)}_i)^2 \E^{X} \psi_{lk}(X_i^{(j)}) \psi_{l'k'}(X_{i'}^{(j')}) = 0,
\end{align*}
since $\psi_{lk} \perp \psi_{l'k'}$ for $(l,k) \neq (l',k')$. Furthermore, using Lemma \ref{lem:folklore_clipping_reduces_variance} and Cauchy-Schwarz, we have that
\begin{align*}
	\E \mathbbm{1}_{A^c} (\eta^{(j)}_i)^2 \psi_{lk}(X_i^{(j)}) \psi_{l'k'}(X_{i'}^{(j')}) &\leq \sqrt{ \E (\xi^{(j)}_i)^4  \E \mathbbm{1}_{A^c}  \psi^2_{lk}(X_i^{(j)}) \psi^2_{l'k'}(X_{i'}^{(j')})}.
\end{align*}
By the independence of $X_i^{(j)}$ and $\eta^{(j)}_i$ and by similar arguments as in the proof of Lemma \ref{lemma:proof-of-bounding-non-private-part-3}, we have that
\begin{align*}
	\E \mathbbm{1}_{A^c}  \psi^2_{lk}(X_i^{(j)}) \psi^2_{l'k'}(X_{i'}^{(j')}) \leq 2^{2L} \|\psi\|_\infty^2 e^{-(\tau-C_{\nu,R})^2/2}.
\end{align*}

Consequently, we have
\begin{align*}
	\sum_{(i,j,l,k)\neq (i',j',l',k')}& \frac{u_j}{n_j}\frac{u_{j'}}{n_{j'}}\psi_{lk}(x_0)\psi_{l'k'}(x_0) \E \eta^{(j)}_i \eta^{(j')}_{i'}\psi_{lk}(X_i^{(j)}) \psi_{l'k'}(X_{i'}^{(j')})\\
	&\leq c 2^L R^2 e^{-(\tau-C_{\nu,R})^2/2} \left(\sum_{j\neq j'} \sum_{i\neq i'} \frac{u_j}{n_j}\frac{u_{j'}}{n_{j'}}\right)\\
	&\leq c 2^L R^2 e^{-(\tau-C_{\nu,R})^2/2} \left(\sum_{j} \sum_{i} \frac{u_j}{n_j}\right)^2\\
	&= c2^LR^2 e^{-(\tau-C_{\nu,R})^2/2}
	\leq cR^2 2^{-2\nu L},
\end{align*}
where on the last line we have used our prescribed choice of $\tau$.
\end{proof}
	
\subsection{Lower bound proofs for the pointwise risk}\label{sec:pointwise_lb_proofs}

Before providing the proof of Theorem \ref{thm:lower_bound-pointwise-risk} at the end of this, we set the stage by deriving a few general results for $(\varepsilon,\delta)$-DP random variables and derive a few required auxiliary lemmas.

Consider a probability space $(\Omega, \cA, \P)$, all the random variables we discuss later are defined on this same space.
Let us first define the support of a random variable $X$ as
\[
\text{Supp}(X) := \{S \in \cA\,:\,\P(X \in S) >0\}.
\]
Let us define the \emph{max-divergence between $Y$ and $Z$} as 
\[
D_\infty (Y\| Z) := \sup_{S \subset \mathrm{Supp}(Y)} \log\left[\frac{\P(Y\in S)}{\P(Z\in S)}\right],
\]
where $c/0 := \infty$ for $c > 0$. The \emph{$\delta$-approximate-max-divergence between $Y$ and $Z$} is defined to be:
\[
D^\delta_\infty (Y\| Z) := \sup_{S \subset \mathrm{Supp}(Y):\P(Y\in S)>\delta} \log\left[\frac{\P(Y\in S)-\delta}{\P(Z\in S)}\right].
\]

The following lemma follows a well known construction (see e.g. \cite{dwork2014algorithmic}) for discrete sample spaces, which we adapt to accomodate the non-discrete case also.

\begin{lemma} \label{lemma:delta-approx-max-to-max-divergence}
	Let $Y$ and $Z$ be such that $\P^Y \ll \P^Z$, $D_\infty^\delta(Y\|Z) \leq \varepsilon$ and $D_\infty^\delta(Z\|Y) \leq \varepsilon$. Then, there exists random variables $Y',Z'$ such that
	\begin{equation}\label{eq:to_show_approx-max-to-max-divergence}
	D_{\mathrm{TV}}(Y,Y') \leq \delta, D_{\mathrm{TV}}	(Z,Z') \leq \delta \text{ and } D_\infty(Y'\|Z') \leq \varepsilon \text{, } D_\infty(Z'\|Y') \leq \varepsilon.
	\end{equation}
\end{lemma}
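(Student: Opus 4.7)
My plan is to prove the lemma by an explicit construction: cap the density ratio $d\P^Y/d\P^Z$ wherever it falls outside $[e^{-\varepsilon}, e^{\varepsilon}]$, and then redistribute the missing mass carefully so that the resulting probability measures preserve the ratio bound while remaining within total-variation distance $\delta$ of $\P^Y$ and $\P^Z$.

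First I would set $f = d\P^Y/d\P^Z$ (which is well-defined since $\P^Y \ll \P^Z$) and introduce the bad sets $A = \{f > e^{\varepsilon}\}$ and $B = \{f < e^{-\varepsilon}\}$. Applying the max-divergence hypotheses to $A$ and $B$, splitting according to whether $\P^Y(A) > \delta$ or $\P^Y(A) \le \delta$ (and analogously for $\P^Z(B)$), yields
\begin{equation*}
\alpha := \P^Y(A) - e^{\varepsilon} \P^Z(A) \le \delta, \qquad \beta := \P^Z(B) - e^{\varepsilon} \P^Y(B) \le \delta.
\end{equation*}
Fixing any measure $\mu$ dominating both $\P^Y$ and $\P^Z$, and writing $p = d\P^Y/d\mu$, $q = d\P^Z/d\mu$, I would define the capped densities $p' = \min(p, e^{\varepsilon} q)$ and $q' = \min(q, e^{\varepsilon} p)$. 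A case analysis over $A$, $B$, and $C := (A \cup B)^c$ then shows $p'/q' \in [e^{-\varepsilon}, e^{\varepsilon}]$ pointwise, while the associated sub-probability measures carry masses $1-\alpha$ and $1-\beta$ respectively. To complete these to honest probability measures I would add non-negative densities $\eta_P, \eta_Q$ supported on $C$, with $\int \eta_P \, d\mu = \alpha$ and $\int \eta_Q \, d\mu = \beta$, chosen so that $\eta_P/\eta_Q \in [e^{-\varepsilon}, e^{\varepsilon}]$ pointwise; one concrete realisation partitions $C$ into two sets on which the added-density ratio takes the extremal values $e^{\varepsilon}$ and $e^{-\varepsilon}$, with the split determined by a small linear system in the unknown masses. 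Take $Y', Z'$ to be random variables whose laws have densities $p'' := p' + \eta_P$ and $q'' := q' + \eta_Q$ with respect to $\mu$.

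The verification then splits cleanly. Since $p''/q''$ is a pointwise convex combination (weighted by $q'$ and $\eta_Q$) of $p'/q'$ and $\eta_P/\eta_Q$, both of which lie in $[e^{-\varepsilon}, e^{\varepsilon}]$, so does $p''/q''$, which yields $D_\infty(Y'\|Z') \le \varepsilon$ and $D_\infty(Z'\|Y') \le \varepsilon$. For the total-variation bound, the signed measure $\P^Y - \P^{Y'}$ is non-negative on $A$ with mass $\alpha$ and non-positive on $C$ with mass $\alpha$, so as a difference of probability measures $\|\P^Y - \P^{Y'}\|_{\mathrm{TV}} = \alpha \le \delta$, and analogously $\|\P^Z - \P^{Z'}\|_{\mathrm{TV}} = \beta \le \delta$. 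The main obstacle is establishing feasibility of the rebalancing step when $\alpha/\beta \notin [e^{-\varepsilon}, e^{\varepsilon}]$ or when $C$ carries little mass; this will require carefully accounting for the available ``room'' on $C$ via the bounds $\alpha, \beta \le \delta$ and the pointwise constraints $p \le e^{\varepsilon} q$ and $q \le e^{\varepsilon} p$ on $C$, and arguing that the prescribed linear system always admits a non-negative solution of the required total mass.
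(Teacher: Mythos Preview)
Your overall strategy---cap the density ratio on the bad sets, then rebalance to restore unit mass---is the same as the paper's, and your capping step and total-variation bounds are correct. The gap is in the rebalancing, precisely where you flag it: if $\alpha/\beta \notin [e^{-\varepsilon},e^{\varepsilon}]$, then \emph{no} non-negative $\eta_P,\eta_Q$ with $\int\eta_P=\alpha$, $\int\eta_Q=\beta$ and pointwise ratio $\eta_P/\eta_Q\in[e^{-\varepsilon},e^{\varepsilon}]$ can exist, simply by integrating the pointwise inequality. Your convex-combination argument for $p''/q''$ relies on exactly this pointwise ratio bound, so the argument as written collapses in that case. The ``room on $C$'' idea is the right instinct---one must instead exploit the slack $e^{\varepsilon}q'-p'\ge 0$ already present on $C$ and allow $\eta_P>0$ even where $\eta_Q=0$---but that abandons the convex-combination argument, and you have not verified that the two-sided constraints $e^{-\varepsilon}(q'+\eta_Q)\le p'+\eta_P\le e^{\varepsilon}(q'+\eta_Q)$ admit a non-negative solution of the required masses in general.

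The paper sidesteps this feasibility issue by capping differently: on each bad set it moves \emph{both} densities toward a common weighted average of $p$ and $q$ (for instance, on $\{f>e^{\varepsilon}\}$ it sets $d\P^{Z'}=\tfrac{1}{1+e^{\varepsilon}}(d\P^Y+d\P^Z)$ and $d\P^{Y'}=e^{\varepsilon}\,d\P^{Z'}$). The point is that then the mass $\P^{Y'}$ loses on that set equals the mass $\P^{Z'}$ gains there, so after capping the two measures have total masses $1-\beta$ and $1+\beta$ for a \emph{single} signed imbalance $\beta=\varrho-\varrho'$. The rebalancing is then just replacing $p',q'$ by convex combinations $\lambda_i p'+(1-\lambda_i)q'$ on the set $\{p'<q'\}$, which automatically keeps the ratio in $[e^{-\varepsilon},e^{\varepsilon}]$ and needs no separate feasibility argument. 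If you want to keep your one-sided cap, you will need to supply a genuine construction for the case $\alpha/\beta\notin[e^{-\varepsilon},e^{\varepsilon}]$; otherwise, adopting the symmetric cap closes the gap cleanly.
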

\begin{proof}
	Define the set $S$ as 
	\[
	S := \left\{ y \, : \, \frac{d\P^Y}{d\P^Z}(y) > e^\varepsilon \right\}.
	\]
	We define the measures $\P^{Y'}$ and $\P^{Z'}$  as follows. On the set $S$ define 
	\[
	\frac{d\P^{Z'}}{d\P^Z}(y) = \frac{1}{1 + e^\varepsilon} \left(\frac{d\P^Y}{d\P^Z}(y) + 1\right),
	\]
	and $\frac{d\P^{Y'}}{d\P^{Z'}}(y) = e^\varepsilon$. Similarly, on the set $S':= \left\{ y \, : \, \frac{d\P^Y}{d\P^Z}(y) \leq e^{-\varepsilon} \right\}$, define 
	\[
	\frac{d\P^{Z'}}{d\P^Z}(y) = \frac{e^\varepsilon}{1 + e^\varepsilon} \left(\frac{d\P^Y}{d\P^Z}(y) + 1\right),
	\]
	and $\frac{d\P^{Y'}}{d\P^{Z'}}(y) = e^{-\varepsilon}$. For $y \in (S \cup S')^c$, set $\frac{d\P^{Z'}}{d\P^Z}(y) = 1$ and $\frac{d\P^{Y'}}{d\P^{Y}}(y) = 1$. As the notation suggests, we then define
\begin{equation*}
\P^{Z'}(A) = \int_A \frac{d\P^{Z'}}{d\P^Z} d\P^Z,
\end{equation*}
such that $\frac{d\P^{Z'}}{d\P^Z}$ is the Radon-Nikodym derivative for the corresponding measure $\P^{Z'}$. Similarly, $\frac{d\P^{Y'}}{d\P^{Z'}}$ defines the Radon-Nikodym derivative with respect to $\P^{Z'}$ for the corresponding measure $\P^{Y'}$. 
	
Roughly speaking, this construction creates a ``version'' $\P^{Y'}$ of $\P^Y$, with altered mass on (measurable subsets of) $S$ and $S'$, where we have decreased the mass of $\P^Y$ on $S$ by $\varrho := \P^Y(S) - \P^{Y'}(S)$ and increased it by $\varrho' := \P^{Y'}(S') - \P^{Y}(S')$ on $S'$. We have that
\begin{align}
	\varrho &= \P^Y(S) - \P^{Y'}(S) \nonumber \\
	&= \P^Y(S) - \int_S \frac{d\P^{Y'}}{d \P^{Z}} d \P^{Z} \nonumber\\
	&= \P^Y(S) - \int_S \frac{e^\varepsilon}{1 + e^\varepsilon}\left(\frac{d\P^{Y}}{d \P^{Z}} + 1\right) d \P^{Z}\nonumber\\
	&= \frac{\P^Y(S) - e^\varepsilon\P^Z(S)}{1 + e^\varepsilon} \label{eq:rho_is_equal_to_frac},
\end{align} 
Next, observe that
\begin{align*}
\P^{Z'}(S) - \P^{Z}(S) &=  \int_S \left(\frac{d\P^{Z'}}{d \P^Z} - 1\right)d\P^Z\\
&=\int_S \left(\frac{d\P^Y}{d\P^Z}\frac{1}{1+e^\varepsilon} - \frac{e^\varepsilon}{1+e^\varepsilon}\right)d\P^Z\\
&= \frac{1}{1 + e^\varepsilon} \left(\int_S d\P^Y - e^\varepsilon d\P^Z\right)\\
&= \varrho,
\end{align*}
where the last line follows from the display above. Similarly, it holds that
\begin{align*}
\P^{Z'}(S') - \P^{Z}(S') = \varrho'.  
\end{align*}
If $\varrho=\varrho'$, it implies that $\P^{Z'}$ and $\P^{Y'}$ are probability measures and one can take $Z'$ and $Y'$ to be the corresponding random variables in the statement of the lemma, as they satisfy \eqref{eq:to_show_approx-max-to-max-divergence}. Here, $D_{\mathrm{TV}}(Y,Y') \leq \delta$ and $D_{\mathrm{TV}}	(Z,Z') \leq \delta$ follows from 
\begin{align*}
	D_{\mathrm{TV}}(Z,Z') &= \int \left| \frac{d\P^{Z'}}{d\P^{Z}} - 1 \right| d\P^Z \\
	&= \int_{S^c} \frac{e^\varepsilon}{1+e^\varepsilon} \left( e^{-\varepsilon} - \frac{d\P^Y}{d\P^Z} \right)d\P^Z + \int_S \frac{1}{1+e^\varepsilon} \left(\frac{d\P^Y}{d\P^Z} - e^\varepsilon\right)d\P^Z\\ 
	&\leq \delta,
\end{align*}
and $D_{\mathrm{TV}}	(Y,Y') \leq \delta$ follows similarly. Suppose $\varrho \neq \varrho'$, say $\varrho > \varrho'$, $\P^{Y'}$ and $\P^{Z'}$ are not probability measures. In what follows next, loosely speaking, we will construct probability measures $\P^{Z''}$ and $\P^{Y''}$ by appropriately increasing (respectively decreasing) the mass of $\P^{Y'}$ and $\P^{Z'}$ on $S$ and $S'$, by a total of $\beta := \varrho - \varrho'$.

	Define the set
	\[
	R := \left\{ y \, : \, \frac{d\P^{Y'}}{d\P^{Z'}}(y) < 1 \right\}.
	\]
	For $y \in R$, define
	\[
	\frac{d\P^{Y''}}{d\P^{Z'}}(y) = \lambda_1 + (1-\lambda_1) \frac{d\P^{Y'}}{d\P^{Z'}}(y),
	\]
	and
	\[
	\frac{d\P^{Z''}}{d\P^{Z'}}(y) = \lambda_2 + (1-\lambda_2) \frac{d\P^{Y'}}{d\P^{Z'}}(y),
	\]
	where $\lambda_1 = \beta (\P^{Z'}(R) - \P^{Y'}(R))^{-1} $ and $\lambda_2  = 1 - \lambda_1$. For $y \in R^c$, $\frac{d\P^{Y''}}{d\P^{Z'}}(y) =\frac{d\P^{Y'}}{d\P^{Z'}}(y)$ and $\frac{d\P^{Z''}}{d\P^{Z'}}(y) = 1$. 
	Observe that
	\[
	\P^{Z'}(R) - \P^{Y'}(R) \geq \P^{Z'}(\Omega) - \P^{Y'}(\Omega) = 2\beta > 0,
	\]
	which means that $\lambda_1$ is well defined and $\lambda_2 \geq \frac{1}{2} \geq\lambda_1$. $\P^{Y''}$ and $\P^{Z''}$ are probability measures:
	\begin{align*}
	\P^{Y''}(\Omega) &=\P^{Y''}(R) + \P^{Y''}(R^c)\\
	&= \int_R \left(\lambda_1 + (1-\lambda_1) \frac{d\P^{Y'}}{d\P^{Z'}}(y)\right)d\P^{Z'} + \P^{Y'}(R^c)\\
	&= \P^{Y'}(R) + \lambda_1(\P^{Z'}(R) - \P^{Y'}(R)) +\P^{Y'}(R^c)\\
	&= \beta + \P^{Y'}(\Omega)  = \P^Y(\Omega) = 1.
	\end{align*}
	Similarly, it can be shown that
	\begin{align*}
	\P^{Z''}(\Omega) = 1.
	\end{align*}
	 Let $Z''$ and $Y''$ be the random variables corresponding to $\P^{Z''}$ and $\P^{Y''}$. By construction, $\frac{d\P^{Y'}}{d\P^{Z'}}(y) \in [e^{-\varepsilon},e^{\varepsilon}]$ for $y \in S\cup S'$. Next, in order to obtain $Y''$ and $Z''$, we have modified the density on the set $R$. Hence on $R^c$ we have that $\frac{d\P^{Y''}}{d\P^{Z''}} = \frac{d\P^{Y'}}{d\P^{Z'}} \in [e^{-\varepsilon},e^\varepsilon]$. On $R$,
	 \begin{align*}
	 	\frac{d\P^{Y''}}{d\P^{Z''}} = \frac{\lambda_1 + (1-\lambda_1) \frac{d\P^{Y'}}{d\P^{Z'}}}{\lambda_2 + (1-\lambda_2) \frac{d\P^{Y'}}{d\P^{Z'}}} \leq 1,
	 \end{align*}
	 where the last inequality follows from the fact that $\lambda_2 > \lambda_1$ and $\frac{d\P^{Y'}}{d\P^{Z'}} < 1$ on $R$.
	 This also implies $\lambda_2 + (1-\lambda_2) \frac{d\P^{Y'}}{d\P^{Z'}} (y) \leq 1$ and
	 \begin{align*}
	 	\frac{d\P^{Y''}}{d\P^{Z''}}(y) = \frac{\lambda_1 + (1-\lambda_1) \frac{d\P^{Y'}}{d\P^{Z'}}(y)}{\lambda_2 + (1-\lambda_2) \frac{d\P^{Y'}}{d\P^{Z'}}(y)} \geq \lambda_1 + (1-\lambda_1) \frac{d\P^{Y'}}{d\P^{Z'}(y)} \geq  \frac{d\P^{Y'}}{d\P^{Z'}(y)} \geq e^{-\varepsilon}
	 \end{align*}
	 for $y \in R$.

	 Hence it holds that $e^{-\varepsilon} \leq \frac{dP^{Y''}}{d\P^{Z''}} \leq  e^\varepsilon$, which implies $D_\infty(Y''\|Z'') \leq \varepsilon$ and $D_\infty(Z''\|Y'') \leq \varepsilon$.
	
	It is left to show that $ D_{\mathrm{TV}}(Y,Y'') \leq \delta, D_{\mathrm{TV}} (Z,Z'')\leq \delta $. We have that
	\begin{align*}
		D_{\mathrm{TV}}(Z,Z'') &= \frac{1}{2} \int \left| \frac{d\P^{Z''}}{d\P^{Z}} - 1 \right| d\P^Z \\
		&\leq  \frac{1}{2} \int \left| \frac{d\P^{Z'}}{d\P^{Z}} - 1 \right| d\P^Z + \frac{1}{2} \int \left| \frac{d\P^{Z''}}{d\P^{Z}} - \frac{d\P^{Z'}}{d\P^{Z}} \right| d\P^{Z} \\
		&= \frac{1}{2} \int \left| \frac{d\P^{Z'}}{d\P^{Z}} - 1 \right| d\P^Z + \frac{1}{2} \int \left| \frac{d\P^{Z''}}{d\P^{Z'}} - 1 \right| d\P^{Z'}.
	\end{align*}
	By definition of $\frac{d\P^{Z'}}{d\P^{Z}}$, $ \frac{d\P^{Z''}}{d\P^{Z'}}$, $S$, $S^c$ and $R$, the latter display equals
	\begin{align*}
		 \frac{1}{2} \int_{S^c} \frac{e^\varepsilon}{1+e^\varepsilon} \left( e^{-\varepsilon} - \frac{d\P^Y}{d\P^Z} \right)d\P^Z + \frac{1}{2}\int_S \frac{1}{1+e^\varepsilon} \left(\frac{d\P^Y}{d\P^Z} - e^\varepsilon\right)d\P^Z \quad& \\ 
		 + \frac{1}{2} \int_{R} (1-\lambda_2)\left(1-\frac{d\P^{Y'}}{d\P^{Z'}} \right) d\P^{Z'} &=\\
		 \frac{1}{2(1+e^\varepsilon)} (\P^Z(S^c) - e^\varepsilon\P^Y(S^c))+ \frac{1}{2(1+e^\varepsilon)}(\P^Y(S) - e^\varepsilon \P^Z(S))& \\ + \frac{1}{2} \lambda_1 (\P^{Z'}(R) - \P^{Y'}(R)).\quad &
	\end{align*}
	By the fact that $D_\infty^\delta(Y\|Z) \leq \varepsilon$ and $D_\infty^\delta(Z\|Y) \leq \varepsilon$, the first two terms are each bounded above by $\delta/4$. The third term equals $\beta/2$. We obtain that the above display is bounded by $\frac{1}{2}(\delta + \beta) \leq \delta$, where we have used the fact that $\beta \leq \varrho \leq \delta/2$, which is implied by the fact that $\varrho \leq \frac{\delta}{1 + e^\varepsilon}$ following \eqref{eq:rho_is_equal_to_frac}. By the same steps, it follows also that $D_{\mathrm{TV}}(Y,Y'') \leq \delta$.
\end{proof}

Next lemma is well known; it relates the max-divergence to the KL-divergence.

\begin{lemma}\label{lemma:max-divergence-bounded-by-kl-divergence}
	If $D_\infty(Y,Z) \leq \varepsilon$ then $D_{\mathrm{KL}}(Y,Z) \leq \varepsilon(e^\varepsilon-1)$.
\end{lemma}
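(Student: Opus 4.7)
The plan is to exploit a simple symmetrization trick: sum the two KL divergences $D_{\mathrm{KL}}(Y\|Z) + D_{\mathrm{KL}}(Z\|Y)$, bound the resulting single integral pointwise, and then use nonnegativity of KL to extract the one-sided bound. First I would interpret the notation $D_\infty(Y,Z) \leq \varepsilon$ symmetrically, i.e. $\max(D_\infty(Y\|Z), D_\infty(Z\|Y)) \leq \varepsilon$, which is consistent with its use in Lemma~\ref{lemma:delta-approx-max-to-max-divergence} where the privacy guarantee is stated in both directions. Writing $r := dP^Y/dP^Z$, this assumption implies $e^{-\varepsilon} \leq r \leq e^{\varepsilon}$, $P^Z$-almost surely (and also $P^Y$-almost surely).

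Next I would rewrite the sum of divergences in a single form:
\[
D_{\mathrm{KL}}(Y\|Z) + D_{\mathrm{KL}}(Z\|Y) \;=\; \int \log r \, dP^Y + \int \log(1/r)\, dP^Z \;=\; \int (r-1)\log r \, dP^Z,
\]
using $dP^Y = r \, dP^Z$ in the first integral. The integrand $(r-1)\log r$ is nonnegative since $r-1$ and $\log r$ always share the same sign, which is what allows the sum to dominate each summand separately.

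The main (but entirely elementary) step is the pointwise bound on $(r-1)\log r$ over $r \in [e^{-\varepsilon}, e^\varepsilon]$. There $|\log r| \leq \varepsilon$ and, using $1 - e^{-\varepsilon} \leq e^\varepsilon - 1$, also $|r-1| \leq e^\varepsilon - 1$, so $(r-1)\log r \leq \varepsilon(e^\varepsilon - 1)$. Integrating and invoking $D_{\mathrm{KL}}(Z\|Y) \geq 0$ yields $D_{\mathrm{KL}}(Y\|Z) \leq \varepsilon(e^\varepsilon - 1)$ (and by the symmetry of the hypothesis, the same bound holds for $D_{\mathrm{KL}}(Z\|Y)$). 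I do not foresee a substantive obstacle; the only point requiring care is confirming the symmetric reading of the two-argument notation $D_\infty(Y,Z)$, without which only the weaker one-sided ratio bound $r \leq e^\varepsilon$ is available and the argument above does not close.
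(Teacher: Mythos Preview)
Your proof is correct and follows essentially the same approach as the paper: both symmetrize by bounding $D_{\mathrm{KL}}(Y\|Z)+D_{\mathrm{KL}}(Z\|Y)=\int (r-1)\log r\,d\P^Z$ with $r=d\P^Y/d\P^Z$, and then use the two-sided ratio bound $r\in[e^{-\varepsilon},e^\varepsilon]$ to get $|\log r|\le\varepsilon$ and $|r-1|\le e^\varepsilon-1$. Your explicit remark about needing the symmetric reading of $D_\infty(Y,Z)\le\varepsilon$ is apt, since both the paper's proof and its application of the lemma rely on both directions being available.
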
 
\begin{proof}
	We know that for any $Y$ and $Z$ it is the case that $D_{\mathrm{KL}}(Y,Z) \geq 0$ and so it suffices to bound $D_{\mathrm{KL}}(Y,Z) + D_{\mathrm{KL}}(Z,Y)$.
	\begin{align*}
		D_{\mathrm{KL}}(Y,Z) &\leq D_{\mathrm{KL}}(Y,Z) + D_{\mathrm{KL}}(Z,Y)\\
		&= \int \log \left( \frac{d \P^Y}{d \P^Z}\right) d\P^Z + \int \log \left( \frac{d \P^Z}{d \P^Y}\right) d\P^Y\\
		&= \int \left[ \log \left( \frac{d \P^Y}{d \P^Z}\right)  + \log \left(\frac{d \P^Z}{d \P^Y}\right) \right]d\P^Z + \int \log \left( \frac{d \P^Z}{d \P^Y}\right) d\P^Y -  \int \log \left( \frac{d \P^Z}{d \P^Y}\right) d\P^Z\\
		&= \int \log \left( \frac{d \P^Z}{d \P^Y}\right) d\P^Y -  \int \log \left( \frac{d \P^Z}{d \P^Y}\right) d\P^Z\\
		&= \int \log \left( \frac{d\P^Z}{d\P^Y} \right) \left( \frac{d\P^Y}{d\P^Z} - 1\right) d\P^Z \leq \int \left|\log \left( \frac{d\P^Z}{d\P^Y} \right) \left( \frac{d\P^Y}{d\P^Z} - 1\right)\right| d\P^Z.
	\end{align*}
By the definition of 
it follows immediately that $\varepsilon \E \left| \frac{d\P^Y}{d\P^Z} - 1\right|$ and $\varepsilon(e^\varepsilon - 1)$, where we have used the fact that $\frac{d \P^Y}{d \P^Z}$ and $\frac{d \P^Y}{d \P^Z}$ are both less than $e^\varepsilon$ which follows from the fact that  $D_\infty(Y,Z) \leq \varepsilon$. 
\end{proof}

We recall the distributed differential privacy setup as laid out in the introduction. Consider $N = \sum_{j=1}^m n_j$ observations sampled i.i.d. from $P_0$ or $P_1$. For $j \in [m]$ (where $m$ is the number of servers) we have $n_j$ observations on server $j$. Each server generates an $(\varepsilon_j,\delta_j)$-DP transcript $T^j$ for $j \in[m]$, the vector of private transcripts is given by $T = (T^1, \ldots,T^m)$. Let us denote the two joint probability distributions, for both the data and transcripts as $\P_0$ and $\P_1$, whenever the data is generated by i.i.d. draws of $P_0$ or $P_1$, respectively. For $ i =0,1$ and $j \in [m]$ let $\P^{T^j}_i$ and $\P^{T}_i$ be the push forward of the transcript $T^j$ and $T$ respectively when data is generated under $\P_i$ . Since given the data generating distribution we have independent protocols, we have that $\P^T_i = \P^{T^1}_i \times \ldots \times \P^{T^m}_i$ for $ i =0,1$. 

We are interested in bounding the total variation distance between $\P^T_0 $ and $\P^T_1$. In that direction we first bound the $\delta$-approximate-max-divergence between $\P^{T^j}_0$ and $\P^{T^j}_1$ for $j \in [m]$ in Lemma \ref{lemma:impure-DP-to-approx-max-divergence} which is a direct corollary of Lemma 6.1 in \cite{karwa2017finite}.

\begin{lemma}\label{lemma:impure-DP-to-approx-max-divergence}
	We have that for any $j \in [m]$
	\[
	D_\infty^{\bar{\delta}_j}(\P^{T^j}_0,\P^{T^j}_1) \leq \bar{\varepsilon}_j,
	\]
	where $\bar{\varepsilon}_j= 6n_j\varepsilon_j D_{\mathrm{TV}}(P_0,P_1)$ and $\bar{\delta}_j= e^{\bar{\varepsilon}_j} n_j\delta_j D_{\mathrm{TV}}(P_0,P_1)$.
\end{lemma}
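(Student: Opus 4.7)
The statement is a direct specialization of Lemma~6.1 of \cite{karwa2017finite} to our setting, so the plan is to follow their coupling-plus-group-privacy argument, which I sketch below. Write $\rho := D_{\mathrm{TV}}(P_0,P_1)$. Let $\pi$ be the maximal coupling of $P_0$ and $P_1$, so that a draw $(U,V)\sim\pi$ satisfies $\P(U\neq V)=\rho$. Drawing $n_j$ independent copies yields a joint law on $(Z^{(j)},\tilde Z^{(j)})$ with marginals $P_0^{n_j}$ and $P_1^{n_j}$ respectively, with Hamming distance
\[
H := \bigl|\bigl\{i : Z_i^{(j)} \neq \tilde Z_i^{(j)}\bigr\}\bigr| \;\sim\; \mathrm{Binomial}(n_j,\rho),
\]
which concentrates around its mean $n_j\rho$.

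The second ingredient is \emph{group privacy}: for any two datasets $z,\tilde z\in\cZ^{n_j}$ at Hamming distance $k$, iterating Definition~\ref{def:differential_privacy} along a chain of $k$ pairwise neighbors yields
\[
\P\bigl(T^{(j)} \in S \mid Z^{(j)}=z\bigr) \leq e^{k\varepsilon_j}\,\P\bigl(T^{(j)} \in S \mid Z^{(j)}=\tilde z\bigr) + k\,e^{(k-1)\varepsilon_j}\delta_j
\]
for every measurable $S\subseteq\cT$. Choose a truncation level $k^{*}\asymp 6 n_j\rho$ and integrate the coupled law against the event $\{H\leq k^{*}\}$: on $\{H\leq k^{*}\}$ the exponent in group privacy is uniformly bounded by $\bar\varepsilon_j=6 n_j\varepsilon_j\rho$, while on $\{H>k^{*}\}$ one uses the trivial bound $\P\leq 1$. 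Combining,
\[
\P_0^{T^{(j)}}(S) \;\leq\; e^{\bar\varepsilon_j}\,\P_1^{T^{(j)}}(S) \;+\; k^{*}e^{\bar\varepsilon_j}\delta_j \;+\; \P(H>k^{*}).
\]
A Chernoff bound on the upper tail of the Binomial shows $\P(H>k^{*})$ is exponentially small in $n_j\rho$, so together with $k^{*}e^{\bar\varepsilon_j}\delta_j$ it is absorbed into the claimed $\bar\delta_j = e^{\bar\varepsilon_j}n_j\rho\delta_j$. Rearranging yields exactly $\P_0^{T^{(j)}}(S)\leq e^{\bar\varepsilon_j}\P_1^{T^{(j)}}(S)+\bar\delta_j$ for all $S$, i.e.\ $D_\infty^{\bar\delta_j}(\P_0^{T^{(j)}}\|\P_1^{T^{(j)}})\leq \bar\varepsilon_j$.

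The main obstacle is the bookkeeping on $\bar\delta_j$: one must pick $k^{*}$ so that simultaneously (i) $k^{*}\varepsilon_j\leq\bar\varepsilon_j$, and (ii) both the additive group-privacy slack $k^{*}e^{(k^{*}-1)\varepsilon_j}\delta_j$ and the Binomial tail $\P(H>k^{*})$ are dominated by $\bar\delta_j$. The numerical constant $6$ appearing in $\bar\varepsilon_j$ is precisely what makes the Chernoff tail for a Binomial deviation of several times its mean balance against the linear-in-$k^{*}$ growth of the additive group-privacy term; beyond this calibration, the argument is essentially algebraic.
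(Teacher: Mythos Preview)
The paper does not give its own proof; it simply invokes Lemma 6.1 of Karwa--Vadhan. Your identification of the coupling-plus-group-privacy strategy is correct and matches that reference. The gap is in the final absorption step. You write that the Chernoff tail $\P(H>k^{*})$, being ``exponentially small in $n_j\rho$'', is absorbed into $\bar\delta_j = e^{\bar\varepsilon_j}n_j\rho\,\delta_j$. This fails in two ways. First, when $\delta_j=0$ the lemma asserts $\bar\delta_j=0$, i.e.\ a \emph{pure} max-divergence bound, yet $\P(H>k^{*})>0$ whenever $\rho\in(0,1)$, so a strictly positive tail cannot be absorbed into zero. Second, when $n_j\rho<1$ (which is exactly the regime used downstream, since $\rho\asymp D^{-(\nu+1)}$), one has $\lceil 6n_j\rho\rceil=1$ and the ``tail'' is $\P(H\geq 1)\approx n_j\rho$, which is neither exponentially small nor carries a factor of~$\delta_j$. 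No choice of the constant $6$ fixes this.

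A clean route that produces the stated $\bar\delta_j$ with no residual tail is to avoid truncation and instead argue coordinate by coordinate. Write $P_i=(1-\rho)Q+\rho R_i$ via the maximal-coupling common part. For a single coordinate, with $A=\E_{Q}[K(S\mid\cdot,z_{-})]$, $B=\E_{R_0}[K(S\mid\cdot,z_{-})]$, $C=\E_{R_1}[K(S\mid\cdot,z_{-})]$ and any fixed $z_{-}$, the $(\varepsilon_j,\delta_j)$-DP constraints $B\leq e^{\varepsilon_j}\min(A,C)+\delta_j$ (used case-wise according to whether $A\lessgtr C$) give
\[
(1-\rho)A+\rho B \;\leq\; (1-\rho+\rho e^{\varepsilon_j})\bigl[(1-\rho)A+\rho C\bigr]+\rho\delta_j .
\]
Iterating over the $n_j$ coordinates yields the multiplicative factor $(1-\rho+\rho e^{\varepsilon_j})^{n_j}\leq e^{n_j\rho(e^{\varepsilon_j}-1)}\leq e^{\bar\varepsilon_j}$ and an additive term $\rho\delta_j\sum_{k<n_j}(1-\rho+\rho e^{\varepsilon_j})^{k}\leq e^{\bar\varepsilon_j}n_j\rho\,\delta_j$, exactly the claimed $\bar\delta_j$, with no Binomial tail left over.
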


Now we are ready to prove Lemma \ref{lemma:total-variation-two-point} below, which gives Lemma \ref{lemma:total-variation-two-point_maintext} as a corollary. 

\begin{lemma}\label{lemma:total-variation-two-point}
	Fix a set $S \subseteq [m]$ we have that
	\begin{equation*}
		D_{\mathrm{TV}}(\P^T_0, \P^T_1) \leq \sqrt 2\sqrt{\sum_{j \in S}\bar{\varepsilon}_j (e^{\bar{\varepsilon}_j} -1) + \sum_{j \in S^c}n_jD_{\mathrm{KL}}(P_0,P_1)} + 4 \sum_{j\in S} \bar{\delta}_j ,
	\end{equation*}
	where $\bar{\varepsilon}_j = 6n_j\varepsilon_j D_{\mathrm{TV}}(P_0,P_1)$ and $\bar{\delta}_j = e^{\bar{\varepsilon}_j} n_j\delta_j D_{\mathrm{TV}}(P_0,P_1)$.
\end{lemma}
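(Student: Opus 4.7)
The plan is to decompose the total variation distance via the triangle inequality in a way that allows different tools for servers in $S$ versus those in $S^c$. For $j \in S$, I would exploit the distributed DP constraint through Lemmas \ref{lemma:impure-DP-to-approx-max-divergence} and \ref{lemma:delta-approx-max-to-max-divergence}, which together convert $(\varepsilon_j,\delta_j)$-DP into a pure max-divergence bound at the cost of a TV perturbation of size $\bar\delta_j$. For $j \in S^c$, I would forego the privacy constraint entirely and rely on the data processing inequality applied to the ambient KL divergence between $P_0^{\otimes n_j}$ and $P_1^{\otimes n_j}$.

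Concretely, for each $j \in S$, apply Lemma \ref{lemma:impure-DP-to-approx-max-divergence} to obtain $D_\infty^{\bar\delta_j}(\P_0^{T^j} \| \P_1^{T^j}) \leq \bar\varepsilon_j$, then apply Lemma \ref{lemma:delta-approx-max-to-max-divergence} to produce distributions $\tilde\P_0^{T^j}$ and $\tilde\P_1^{T^j}$ with $D_{\mathrm{TV}}(\P_i^{T^j}, \tilde\P_i^{T^j}) \leq \bar\delta_j$ for $i=0,1$, and with $D_\infty(\tilde\P_0^{T^j} \| \tilde\P_1^{T^j}) \vee D_\infty(\tilde\P_1^{T^j} \| \tilde\P_0^{T^j}) \leq \bar\varepsilon_j$. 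For $j \in S^c$ set $\tilde\P_i^{T^j} := \P_i^{T^j}$. Since the transcripts $T^1,\ldots,T^m$ are conditionally independent given the data-generating distribution, $\P_i^T = \bigotimes_j \P_i^{T^j}$; define the intermediate product measures $\tilde\P_i^T := \bigotimes_j \tilde\P_i^{T^j}$. Using the triangle inequality together with subadditivity of total variation under tensor products gives
$$D_{\mathrm{TV}}(\P_0^T, \P_1^T) \leq D_{\mathrm{TV}}(\tilde\P_0^T, \tilde\P_1^T) + 2 \sum_{j \in S} \bar\delta_j.$$

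For the remaining $D_{\mathrm{TV}}(\tilde\P_0^T, \tilde\P_1^T)$ term, I would invoke Pinsker's inequality together with tensorization of the KL divergence across the independent coordinates, yielding
$$D_{\mathrm{TV}}(\tilde\P_0^T, \tilde\P_1^T) \leq \sqrt{\tfrac{1}{2} \sum_{j=1}^m D_{\mathrm{KL}}(\tilde\P_0^{T^j} \| \tilde\P_1^{T^j})}.$$
For $j \in S$, Lemma \ref{lemma:max-divergence-bounded-by-kl-divergence} bounds each summand by $\bar\varepsilon_j(e^{\bar\varepsilon_j}-1)$. For $j \in S^c$, the data processing inequality applied to the randomized map $Z^{(j)} \mapsto T^{(j)}$ combined with the i.i.d. structure of the sample on server $j$ yields $D_{\mathrm{KL}}(\P_0^{T^j} \| \P_1^{T^j}) \leq n_j D_{\mathrm{KL}}(P_0,P_1)$. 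Substituting these bounds gives the stated inequality, with the explicit numerical constants absorbing the Pinsker prefactor and the total variation slack from the two perturbation terms.

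The main obstacle is ensuring that the constructed marginals $\tilde\P_i^{T^j}$ genuinely assemble into honest probability product measures $\tilde\P_i^T$ so that KL tensorization applies; this is not automatic from Lemma \ref{lemma:delta-approx-max-to-max-divergence} in isolation, but it follows here because the perturbation is carried out marginal by marginal independently across $j \in S$, preserving product structure. A secondary care point is verifying data processing for the $(\varepsilon_j,\delta_j)$-DP channel at the KL level on $S^c$; since the channel is simply a Markov kernel from $Z^{(j)}$ to $T^{(j)}$, this is the standard KL contraction.
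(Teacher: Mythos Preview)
Your proposal is correct and follows essentially the same route as the paper: invoke Lemma~\ref{lemma:impure-DP-to-approx-max-divergence} and Lemma~\ref{lemma:delta-approx-max-to-max-divergence} on each $j\in S$ to replace $\P_i^{T^j}$ by perturbed marginals with bounded max-divergence, form the product measures, split via the triangle inequality, then use Pinsker plus KL tensorization with Lemma~\ref{lemma:max-divergence-bounded-by-kl-divergence} on $S$ and data processing on $S^c$. The only minor slips are that Lemma~\ref{lemma:delta-approx-max-to-max-divergence} requires the two-sided hypothesis $D_\infty^{\bar\delta_j}(\P_0^{T^j}\|\P_1^{T^j})\le\bar\varepsilon_j$ \emph{and} $D_\infty^{\bar\delta_j}(\P_1^{T^j}\|\P_0^{T^j})\le\bar\varepsilon_j$ (you state only one), and your Pinsker/TV constants are sharper than the paper's stated $\sqrt{2}$ and $4$; both are harmless for the claimed inequality.
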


\begin{proof}[Proof of Lemma \ref{lemma:total-variation-two-point}]
	Using the fact that the transcript from the $j$-th server is $(\varepsilon_j,\delta_j)$-DP, Lemma \ref{lemma:impure-DP-to-approx-max-divergence} yields that
	\[
	D_\infty^{\bar{\delta}_j}(\P^{T^j}_0,\P^{T^j}_1) \leq \bar{\varepsilon}_j \text{ and } D_\infty^{\bar{\delta}_j}(\P^{T^j}_1,\P^{T^j}_0) \leq \bar{\varepsilon}_j.
	\]
	Using Lemma \ref{lemma:delta-approx-max-to-max-divergence} there exists $\tilde T^j_0$ and $\tilde T^j_1$ such that $D_{TV}(\P^{\widetilde T^j}_0,\P^{ \widetilde T^j}_0) \leq 2\bar{\delta}_j$ and $D_\infty(\P^{\tilde T^j}_0, \P^{\tilde T^j}_1) \leq \bar{\varepsilon}_j$ and $D_\infty(\P^{\tilde T^j}_1, \P^{\tilde T^j}_0) \leq \bar{\varepsilon}_j$.
	Let us define $\P^{\bar T}_i := \P^{\bar T^1}_i \times \ldots \times \P^{\bar T^m}_i$ where for $j \in[m]$ $\bar T^j_i := \widetilde T^j_i$ if $j\in S$ and $\bar T^j_i :=  T^j_i$ otherwise. By the triangle inequality,
	$$
	D_{\mathrm{TV}}(\P^T_0,\P^T_1) \leq D_{\mathrm{TV}}(\P^{\bar T}_0, \P^{\bar T}_1) + D_{\mathrm{TV}}(\P^T_0,\P^{ \bar T}_0) + D_{\mathrm{TV}}(\P^{\bar T}_1, \P^T_1) .
	$$
	For the second term, we have that
	\begin{align*}
		D_{\mathrm{TV}}(\P^{\bar T}_0, \P^{T}_0) &\leq \sum_{j \in [m]} D_{\mathrm{TV}}(\P^{\bar T^j}_0, \P^{T^j}_0)\\
		&= \sum_{j \in S} D_{\mathrm{TV}}(\P^{\widetilde T^j}_0, \P^{T^j}_0) \leq 2 \sum_{j \in S} \bar{\delta}_j .
	\end{align*}
	Similarly we have that $D_{\mathrm{TV}}(\P^{\bar T}_1,\P^{T}_1) \leq 2 \sum_{j \in S} \bar{\delta}_j$. We now tend to the term $D_{\mathrm{TV}}(\P^{\bar T}_0, \P^{\bar T}_1)$. By Pinskers inequality, the independence of the transcripts given the data generating process and the tensorization of the KL-divergence we have that
	\begin{align*}
		D_{\mathrm{TV}}(\P^{\bar T}_0,\P^{\bar T}_1) &\leq \sqrt{2 D_{\mathrm{KL}}(\P^{\bar T}_0,\P^{ \bar T}_1)}\\
		&= \sqrt{2 \sum_j D_{\mathrm{KL}}(\P^{\bar T^j}_0, \P^{\bar T^j}_1)}\\
		&= \sqrt{2 \sum_{j\in S} D_{\mathrm{KL}}(\P^{\widetilde T^j}_0, \P^{\widetilde T^j}_1) + \sum_{j\in S^c} D_{\mathrm{KL}}(\P^{ T^j}_0, \P^{ T^j}_1)}.
	\end{align*}
	 By Lemma \ref{lemma:max-divergence-bounded-by-kl-divergence} we have that $D_{\mathrm{KL}}(\P^{\widetilde T^j}_1,\P^{ \widetilde T^j}_0) \leq \bar{\varepsilon}_j(e^{\bar{\varepsilon}_j} - 1)$. Let us denote $P^j_i = P_i \times\ldots\times P_i$ ($n_j$ times), by a standard data processing inequality (see e.g. \cite{raginsky2016strong}), we have that 
	 \[
	 D_{\mathrm{KL}}( \P^{T^j}_0, \P^{T^j}_1) \leq  D_{\mathrm{KL}}( P^j_0,  P^j_1) = n_j  D_{\mathrm{KL}}( P_0,  P_1)
	 \]
	 which implies
	 \[
	 D_{\mathrm{TV}}(\P^{\bar T}_0, \P^{\bar T}_1) \leq \sqrt{2 \sum_{j\in S} \bar{\varepsilon}_j(e^{\bar{\varepsilon}_j} - 1) + \sum_{j\in S^c} n_j  D_{\mathrm{KL}}( P_0,  P_1)}.
	 \]
\end{proof}

\begin{proof}[Proof of Lemma \ref{lemma:total-variation-two-point_maintext}]
For two probability measures $P$ and $Q$ with $X \sim P$ and $\tilde{X} \sim Q$ and any coupling $\P^{X,\tilde{X}}$ of $(X,\tilde{X})$, it holds that
\begin{equation}\label{eq:coupling_bound_TV_general}
\| P - Q \|_{TV} \leq 2 \P^{X,\tilde{X}} \left( X \neq \tilde{X} \right),
\end{equation}
see e.g. Section 8.3 in \cite{thorisson2000coupling}. Hence, we can conclude that $D_{TV}(P_{f},P_{\tilde f}) \leq 2\rho$, where $\rho$ is defined in \eqref{eq:coupling_probability_bound}. 

Using the above fact, the proof now follows by a direct application of Lemma \ref{lemma:total-variation-two-point}, with $\P_0 = \P_{f}$ and $\P_1 = \P_{\tilde f}$. 
\end{proof}

\begin{proof}[Proof of Theorem \ref{thm:lower_bound-pointwise-risk}]
	We will be using the Le Cam two point method of \cite{Yu1997} to show the lower bound, i.e. Lemma 1 in the aforementioned paper. To that extend, we aim to show that there exists $f,\tilde{f} \in \cB^\alpha_{p,q}(R)$ such that the push-forward distributions $\P_f^T$ and $\P_{\tilde{f}}^T$ are close in total variation distance, for any $(\bm \varepsilon, \bm \delta )$-DP protocol $T$. 

	Consider $f$ such that $\|f\|_{\cB^\alpha_{p,q}} = R' < R$ and let $g:\R \to \R$ be any compactly supported function with $g(0) >0$, $\|g\|_1 >0$ and $g \in \cB^\alpha_{p,q}(R-R')$. Consider the two sequences of non-negative real numbers $\gamma_D := c_0^{-1} D^\nu$ and $\beta_n$ such that $\beta_n^\nu \gamma_D^{-1} = 1$ where we recall that $\nu = \alpha - \frac{1}{p}$ and define
	\[
	\tilde{f}(t) := \gamma_D^{-1} g(\beta_n(t-x_0)) + f(t).
	\]
	By Lemma 1 in \cite{cai2003rates}, it holds that
	\[
	\|\tilde{f}\|_{\cB^\alpha_{p,q}} \leq \gamma_D^{-1} \beta_n^\nu \|g\|_{\cB^\alpha_{p,q}} + \|f\|_{\cB^\alpha_{p,q}} \leq R.
	\]
	Next, we will aim to bound $\|\P_{\tilde{f}}^T - \P_{f}^T\|_{\mathrm{TV}}$. To that extent, let us denote by $(Y_i,X_i)$ and $(\tilde{Y}_i,\tilde{X}_i)$ random variables with marginals $P_{\tilde{f}}$ and $P_{f}$ respectively. We start by constructing a coupling as follows between these pairs of random variables as follows. 

	Let $U \sim \mathrm{Unif}(0,1)$ and set $X_i=\tilde{X}_i=U$, next for a given $U=u$ we find total variation coupling between $N(\tilde{f}(u),\sigma^2)$ and $N(f(u),\sigma^2)$. That is we have random variables $(Z_{i}(u),\tilde{Z}_{i}(u))$ such that $Z_{i}(u)\sim N({f}(u),\sigma^2)$ and $\tilde{Z}_{i}(u)\sim N(\tilde{f}(u),\sigma^2)$ 

	Set $Y_i := Z_i(U)$ and $\tilde{Y}_i := \tilde{Z}_i(U)$ respectively. Let $P_{f,\tilde{f}}$ the probability measure corresponding to coupling between $(Y_i,X_i)$ and $(\tilde{Y}_i,\tilde{X}_i)$ defined as above. We have that
	\begin{align*}
		P_{f,\tilde{f}}\left( \left(Y_i^{(j)},X_i^{(j)}\right) \neq \left(\tilde{Y}_i^{(j)},\tilde{X}_i^{(j)}\right) \right) &\leq \E\left[\mathbbm{1}\{({Z}_i(U),U) \neq (\tilde{Z}_i(U),U)\}\right]\\
		&= \E \left[\E\left(\mathbbm{1}\{{Z}_i(U) \neq \tilde{Z}_i(U)\} \mid U\right)\right]\\
		&\leq \E  \left[c \frac{|\tilde{f}(U)-f(U)|}{\sigma}\right]\\
		&= \frac{c}{\sigma} \|\tilde{f} -f\|_1.
	\end{align*}
Observe that the $L_1$-distance between $\tilde{f}$ and $f$ is given by $\gamma_D^{-1}\beta_n^{-1}\|g\|_1$. Next, we set  
$
\gamma_D := c_0^{-1} 2^{L\nu},
$
where $L$ is such that
\[
2^{L(2\nu+2)} = \sum_{j=1}^m (n_j^2\varepsilon_j^2) \wedge (n_j2^{L}).
\]
This choice automatically gives us a choice of $\beta_n$ using our construction that $\beta_n^\nu \gamma_D^{-1} = 1$. Next, we use Lemma \ref{lem:coupling_lemma} and Lemma \ref{lemma:total-variation-two-point_maintext}. First let us choose the set $S$ which we define as 
\[
S = \left\{j \in [n]: \varepsilon_j \leq \sqrt \frac{2^L}{n_j}\right\}.
\]
For $\bar{\varepsilon}_j$ with $j \in S$ we have
\begin{align*}
	\bar{\varepsilon}_j &= 6 n_j \varepsilon_j D_{\mathrm{TV}}(\P_{\tilde{f}}^T, \P_{f}^T)\\
	&=6c \sigma^{-1} n_j \varepsilon_j \gamma_D^{-1} \beta_n^{-1}\|g\|_1\\
	&= 6c \sigma^{-1} n_j \varepsilon_j c_0^{1 + \frac{1}{\nu}} 2^{-L(1+\nu)}\|g\|_1\\
	&= 6c \sigma^{-1} c_0^{1 + \frac{1}{\nu}} \frac{n_j \varepsilon_j }{\sqrt{\sum_{j=1}^m (n_j^2\varepsilon_j^2) \wedge (n_j2^{L})}}\|g\|_1\\
	&\leq 6c \sigma^{-1}\|g\|_1 c_0^{1 + \frac{1}{\nu}} \frac{n_j \varepsilon_j }{\sqrt{ (n_j^2\varepsilon_j^2) \wedge (n_j2^{L})}}\\
	&= 6c \sigma^{-1}\|g\|_1 c_0^{1 + \frac{1}{\nu}} \quad (\because j \in S).
\end{align*}
Now by suitably choosing $c_0$ in terms of $c,\sigma$, $\nu$ and $\|g\|_1$ we have that $\bar{\varepsilon}_j \leq 1$ and using the fact that $e^x - 1 \leq 2x$ for $x \in[0,1]$ we have that
\begin{align*}
	2 \sum_{j\in S} \bar{\varepsilon}_j(e^{\bar{\varepsilon}_j} - 1) &\leq 4 \sum_{j \in S} (\bar{\varepsilon}_j)^2\\
	&\leq 4 .(6c\sigma^{-1}c_0^{1+\frac{1}{\nu}}\|g\|_1)^2 \frac{\sum_{j\in S} n_j^2\varepsilon_j^2 }{\sum_{j=1}^m (n_j^2\varepsilon_j^2) \wedge (n_j2^{L})}\\
	&= c_0'\frac{\sum_{j\in S} n_j^2\varepsilon_j^2  \wedge n_j2^{L} }{\sum_{j=1}^m (n_j^2\varepsilon_j^2) \wedge (n_j2^{L})},
\end{align*}
where $c_0' = 4 .(6c\sigma^{-1}c_0^{1+\frac{1}{\nu}}\|g\|_1)^2$.
It is easy to show that via direct computation that $D_{\mathrm{KL}}(P_{\tilde{f}},P_{f}) \leq c_2 \sigma^{-2}\|\tilde{f}-f\|_2^2 $. Hence, we have that
\begin{align*}
	D_{\mathrm{KL}}(P_{\tilde{f}},P_{f}) &\leq c_2 \sigma^{-2} \gamma_D^{-2} \beta^{-1}_n \|g\|^2_2\\
	&=c_2 \sigma^{-2}\|g\|^2_2 c_0^{\frac{2+\nu}{\nu}} 2^{-L(2\nu + 1)}.
\end{align*}
For a small enough choice of $c_0$,
\begin{align*}
	\sum_{j \in S^c} n_j D_{\mathrm{KL}}(P_{\tilde{f}},P_{f}) &\leq  	\sum_{j \in S^c} n_j c_2 \sigma^{-2}\|g\|^2_2 c_0^{\frac{2+\nu}{\nu}} 2^{-L(2\nu + 1)}\\
	&\leq c_0''	\sum_{j \in S^c} n_j 2^L2^{-L(2\nu + 2)}\\
	&= c_0''\frac{\sum_{j\in S^c} n_j2^{L} }{\sum_{j=1}^m n_j^2\varepsilon_j^2 \wedge n_j2^{L}}\\
	&= c_0''\frac{\sum_{j\in S^c} n_j^2\varepsilon_j^2  \wedge n_j2^{L} }{\sum_{j=1}^m n_j^2\varepsilon_j^2 \wedge n_j2^{L}},
\end{align*}
where $c_0'' = c_2\sigma^{-2} \|g\|_2^2 c_0^{1 + 2/\nu}$.
Combining the above bounds with the condition on $\bm{\delta}$, we have that $\|\P_{\tilde{f}}^T - \P_{f}^T\|_{\mathrm{TV}}$ is bounded by
\begin{align*}
\sqrt{2}\sqrt{ \sum_{j\in S} \bar{\varepsilon}_j(e^{\bar{\varepsilon}_j} - 1) + \sum_{j \in S^c} n_j D_{\mathrm{KL}}(P_{\tilde{f}},P_{f})} + 4 \underset{j \in S}{\overset{}{\sum}} e^{\bar{\varepsilon}_j} n_j\delta_j \rho, &\leq \sqrt{2(c_0''\wedge c'_0)} + 4 e \sum n_j\delta_j\\
&\leq 2\sqrt{2(c_0''\wedge c'_0)},
\end{align*}
where we have used the fact that $\rho \leq 1$, $\bar\varepsilon_j \leq 1$ and $\sum n_j\delta_j = o(1)$. The RHS above can be made arbitrarily small by choosing $c_0$ small enough.

Hence we have by Lemma 1 in \cite{Yu1997} that the minimax rate is lower bounded by
\[
c'(\tilde{f}(x_0) - f(x_0))^2 = c'\gamma_D^{-2}g^2(0) \gtrsim 2^{-2L\nu},
\]
for a universal constant $c' >0$, which finishes the proof.
\end{proof}

\end{document}